\patchcmd{\@addmarginpar}{\ifodd\c@page}{\ifodd\c@page\@tempcnta\m@ne}{}{}
\providecommand*{\cupdot}{%
  \mathbin{%
    \mathpalette\@cupdot{}%
  }%
}
\newcommand*{\@cupdot}[2]{%
  \ooalign{%
    $\m@th#1\cup$\cr
    \hidewidth$\m@th#1\cdot$\hidewidth
  }%
}
\providecommand*{\bigcupdot}{%
  \mathop{%
    \vphantom{\bigcup}%
    \mathpalette\@bigcupdot{}%
  }%
}
\newcommand*{\@bigcupdot}[2]{%
  \ooalign{%
    $\m@th#1\bigcup$\cr
    \sbox0{$#1\bigcup$}%
    \dimen@=\ht0 %
    \advance\dimen@ by -\dp0 %
    \sbox0{\scalebox{2}{$\m@th#1\cdot$}}%
    \advance\dimen@ by -\ht0 %
    \dimen@=.5\dimen@
    \hidewidth\raise\dimen@\box0\hidewidth
  }%
}
\newcommand{\cO}{\mathcal{O}}
\newcommand{\cM}{\mathcal{M}}
\newcommand{\cR}{\mathcal{R}}
\newcommand{\cU}{\mathcal{U}}
\newcommand{\cX}{\mathcal{X}}
\newcommand{\bitm}{\begin{itemize}}
\newcommand{\eitm}{\end{itemize}}
\newcommand{\bitme}{\begin{enumerate}[label=(\roman*),leftmargin=0.25in]}
\newcommand{\eitme}{\end{enumerate}}
\newcommand{\beq}{\begin{equation}}
\newcommand{\eeq}{\end{equation}}
\newcommand{\btcb}{\begin{tcolorbox}}
\newcommand{\etcb}{\end{tcolorbox}}
\def\bals#1\eals{\begin{align*} #1 \end{align*}}
\def\bal#1\eal{\begin{align} #1 \end{align}}
\newcommand{\Tthen}{\quad \text{ then } \quad}
\newcommand{\where}{\quad \text{ where } }
\newcommand{\Ffor}{\quad \text{ for }}
\newcommand{\Aand}{\quad \text{ and } \quad }
\newcommand\Dom\Omega
\newcommand\RR{\mathbb{R}}
\newcommand\NN{\mathbb{N}}
\newcommand\UU{\mathbb{U}}
\newcommand\VV{\mathbb{V}}
\newcommand\Lap\Delta
\newcommand\abs[1]{\left\lvert #1 \right\rvert}
\newcommand\dx{\,\mathrm{d}x}
\newcommand\dy{\,\mathrm{d}y}
\newcommand\dtau{\,\mathrm{d}\tau}
\def\bpde#1\epde{\[\left\{\begin{aligned}#1\end{aligned}\right. \]}
\def\inbpde#1\inepde{\left\{\begin{aligned}#1\end{aligned}\right.}
\def\binpde#1\einpde{\left\{\begin{aligned}#1\end{aligned}\right.}
\newcommand\ddf[2]{\frac{\p {#1}}{\p {#2}}}
\newcommand\SemiNorm[2]{\lvert { #1 } \rvert_{#2}}
\newcommand\SemiNormlr[2]{\left\lvert { #1 } \right\rvert_{#2}}
\newcommand\Normlr[2]{\left\lVert { #1 } \right\rVert_{#2}}
\newcommand\Norm[2]{\lVert { #1 } \rVert_{#2}}
\def\cU{\mathcal{U}}
\def\cR{\mathcal{R}}
\def\half{\frac{1}{2}}
\def\cU{\mathcal{U}}
\def\p{\partial}
\def\b0{\mathbf{0}}
\def\eps{\varepsilon}
\def\bbmat{\begin{bmatrix}[r]}
\def\ebmat{\end{bmatrix}}
\newcommand{\barr}{\begin{array}}
\newcommand{\ea}{\end{array}}
\newcommand{\bea}{\begin{eqnarray}}
\newcommand{\eea}{\end{eqnarray}}
\newcommand{\bt}{\begin{table}}
\newcommand{\et}{\end{table}}
\DeclareMathOperator\Id{Id}
\DeclareMathOperator\Span{span}
\DeclareMathOperator\supp{supp}
\DeclareMathOperator\rg{range}
\DeclareMathOperator\diag{diag}
\newtheorem{example}[theorem]{Example}
\numberwithin{equation}{section}
\newcommand\BB{\mathbb{B}}
\newcommand\WW{\mathbb{W}}
\newcommand\tturl[1]{{\tt \scriptsize [\url{{#1}}]}}
\newcommand{\graycolor}[1]{{\color{gray}#1}}
\newcommand{\minialign}[1]{\left\{\begin{aligned} #1 \end{aligned}\right.}
\newcommand{\TheTitle}
{A Low Rank Neural Representation of \\Entropy Solutions}
\newcommand{\hDomx}{\widehat{\Dom}_x}
\newcommand{\Domx}{\Dom_x}
\newcommand{\Domt}{\Dom_t}
\newcommand{\Const}{C_{u_0, T, F}}
\newcommand{\hId}{\hat{I}}
\newcommand{\charX}{X}
\newcommand{\hG}{\widehat{G}}
\newcommand{\hcharX}{\widehat{X}}
\newcommand{\chkcharX}{\widecheck{X}}
\newcommand{\hcharXc}{\widehat{X}_0}
\newcommand{\shockset}{\Upsilon_t}
\newcommand{\shocksett}[1]{\Upsilon_{#1}}
\newcommand{\skleft}{{\vdash}}
\newcommand{\skright}{{\dashv}}
\newcommand{\otimesd}{\otimes_d}
\newcommand{\dotimes}{{}_d\otimes}
\begin{document}

\ifpdf
\DeclareGraphicsExtensions{.pdf, .jpg, .tif}
\else
\DeclareGraphicsExtensions{.eps, .jpg}
\fi

\title{\TheTitle}

\author{%
  Donsub Rim\thanks{{\scriptsize Department of Mathematics, %
  Washington University in St. Louis, St. Louis, MO 63130 ({\tt
  rim@wustl.edu})}}%
  \and 
  Gerrit Welper%
  \thanks{{\scriptsize Department of Mathematics, University of Central Florida,
  Orlando, FL 32816 ({\tt gerrit.welper@ucf.edu})}}
}

\maketitle

\begin{abstract}
  We construct a new representation of entropy solutions to nonlinear scalar
  conservation laws with a smooth convex flux function in a single spatial
  dimension.  The representation is a generalization of the method of
  characteristics and posseses a compositional form.  While it is a nonlinear
  representation, the embedded dynamics of the solution in the time variable is
  linear. This representation is then discretized as a manifold of implicit
  neural representations where the feedforward neural network architecture has a
  low rank structure. Finally, we show that the low rank neural representation
  with a fixed number of layers and a small number of coefficients can
  approximate any entropy solution regardless of the complexity of the shock
  topology, while retaining the linearity of the embedded dynamics. 
\end{abstract}

\begin{keywords}
neural networks, low rank neural representation, reduced order models,
dimensionality reduction, hyperbolic conservation laws
\end{keywords}
  
\begin{AMS}
\texttt{68T07}, \texttt{41A46}, \texttt{41A25}, \texttt{65N15}, \texttt{35L65}
\end{AMS}

\maketitle

\section{Introduction}

Current state-of-the-art numerical algorithms simulating realistic fluid
dynamics, even when run on the best hardware available today, are not real
time. One proposed strategy in accelerating these solvers is to seek
approximations of solutions to partial differential equations (PDEs) with
small degrees of freedom. A class of such approximations called
\emph{reduced models} have been successfully applied for solutions to
certain parametrized PDEs \cite{Cohen_DeVore_2015,Hesthaven2016}.

The reduced models are commonly constructed as linear low-dimensional
approximations written as a superposition of a few basis functions, and we
will refer to such models as \emph{classical reduced models}. When such an
approximation to the solution to a time-dependent problem is possible, the
computational complexity required to evolve the solution forward in time
often scales with the number of the basis functions
\cite{Hesthaven_Pagliantini_Rozza_2022}. For example, if standard
numerical algorithms such as finite difference methods require polynomial
in $N$ complexity to compute one forward time step according to the PDE,
where $N$ is the number of finite difference grid points, a reduced model
using $\log N$ basis functions would instead require the reduced
polylogarithmic complexity in $N$ to do so, allowing the simulations to
run faster than real time. This difference in the reduced complexity is a
key desired feature in reduced models.

The limitations of classical reduced models are readily seen when one
attempts to produce one for the simplest of hyperbolic problems, even the
advection equation \cite{rowley00,Constantine2012}. The cause is the slow
decay rate of the Kolmogorov width \cite{pinkus12} of the solution
manifold; its lower bounds imply that there cannot be a small set of
reduced basis functions whose superpositions can approximate the solution
uniformly well \cite{Welper2017,Ohlberger16,Greif19}. More generally,
non-classical reduced models are also not completely immune from such
lower bounds; slow decay can be established even for nonlinear benchmarks
such as the stable width and entropy for high-dimensional parametric
transport equations \cite{Cohen2022, rim2023performance}. Also relevant in
this context are studies of stability of classical reduced models
\cite{Haasdonk_Ohlberger_2008,Nguyen2009,Dahmen_Plesken_Welper_2014,amsallem}.

Various attempts were made to obtain the reduced complexity for
convection-dominated problems by incorporating the structure of the
transport. To provide a possibly incomplete list of proposed approaches
that are the most relevant to this work: template-fitting \cite{rowley00},
method of freezing \cite{Beyn04,Ohlberger13}, shock reconstruction
\cite{Constantine2012}, approximated Lax-Pairs \cite{Gerbeau14}, advection
modes \cite{iollo14}, transported snapshot interpolation (TSI)
\cite{Welper2017}, shifted proper orthogonal decomposition (sPOD)
\cite{schulze18}, calibrated manifolds \cite{CagniartMadayStamm2019},
Lagrangian basis method \cite{Mojgani17}, transport reversal
\cite{rim17reversal}, registration methods \cite{taddei2020}, Wasserstein
barycenters \cite{ehrlacher19} or quadratic manifolds
\cite{GeelenWrightWillcox2023}. Some of these proposed methods can be
viewed as dynamical low-rank approximations \cite{koch07,Sapsis2009}. 

However, relatively few methods we encountered \cite{NairBalajewicz2019,
  taddei2021, rim2023mats} have demonstrated (1) it can be deployed in a
numerical scheme to solve the PDE using the approximation (e.g. in
Galerkin systems), and (2) has an online complexity matching that of
classical reduced models. These works exploit a nonlinear model that
composes two superpositions of a few basis functions. We will call the
nonlinear class of reduced models that can be represented in compositional
forms satisfying the two criteria \emph{complexity-separated compositional
  reduced models}, or simply \emph{compositional reduced models}.

The compositional reduced models, while achieving the two properties
required of classical reduced models, have not yet demonstrated an ability
to handle shock propagation in their full generality; they were tested in
restricted settings without shock interaction. This is a serious
limitation, since for many PDEs arising from fluid dynamics, shock
formation and collision are defining features. For example, a typical
entropy solution to the Burgers' equation is shown in
Fig.~\ref{fig:burgers_waterfall}. Shocks can form at various times, travel at
varying speeds, and merge; new techniques are required to handle these
challenges. For example, a computational approach called Front Transport
Reduction (FTR) employs level sets to represent the changes in the shock
topology \cite{Krah2023}.

\begin{figure}
  \centering
  \includegraphics[width=0.8\textwidth]{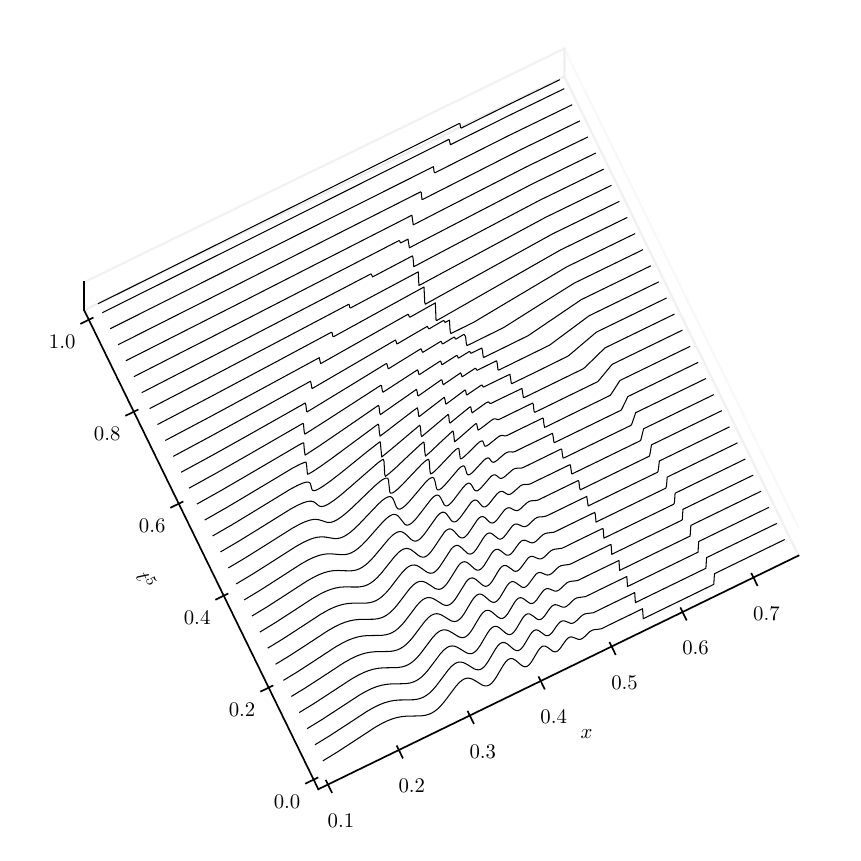}
  \caption{An example of solution to the Burgers' equation. Time variable has
    been rescaled to better show details.}
  \label{fig:burgers_waterfall}
\end{figure}

The dearth of progress invites the question: What is the essential
obstacle that prevents the handling of shocks for the compositional
models? Part of the answer lies in the fact that while it is clear that
the characteristic curves can be approximated well by a few basis
functions in convective problems when, for example, the velocity is
smooth, it is not at all clear what low-dimensional structure can be
exploited in the presence of various shock configurations.

In another line of approach, there have been attempts to leverage deep
learning models to overcome the difficulty
\cite{HanJentzenWeinan2018,KhooLuYing2020,MohanGaitonde2018,HesthavenUbbiali2018,WangHesthavenRay2019,gonzalez2018deep,lee19,RegazzoniDedeQuarteroni2019,Mojgani17,Fresca2021,Mojgani2023,Barnett2023},
but whether one can devise deep learning models that can be evolved
according to the PDE with the reduced complexity remains an open problem.
The currently existing models do not achieve the complexity found in
successful classical reduced models \cite{Kim2022}. Given the
compositional models studied in the past, and given how the deep learning
models are defined with layers of compositions, it is natural to ask if
there is a variant of the standard feedforward architecture that can
connect them.

There are previous theoretical results relating transport equations,
reduced models, and neural networks, in a broad sense. An approximation of
parametric linear transport equations was derived in the setting of smooth
convective fields \cite{Laakmann2021} by approximating the initial
condition and the characteristic curves \cite{Yarotsky2017,Petersen2018}.
Neural networks were shown to be capable of emulating the approximations
of reduced basis methods \cite{Geist2021,Kutyniok2022}. A more recent
result \cite{dahmen2023compositional} showed various estimates using the
notion of \emph{compositional sparsity}; e.g. when the high-dimensional
parametric convection field has an affine decomposition, it was shown that
there is a neural network approximation whose number of weight parameters
depend linearly on the affine dimension. 

In this work, we show one can exploit an inherent nonlinear
low-dimensional structure in the entropy solution by repeated
compositions, and present a theoretical construction in the form of a
family of feedforward deep neural networks. 

Neural networks that take spatial-temporal inputs are referred to as
implicit neural representations (INRs) in deep learning or computer vision
literature (e.g. \cite{Park2019, Sitzmann2020}); we follow suit and refer
to our neural networks as INRs. Our construction can be viewed as a
continuously parametrized family of INRs whose weight parameters are
constrained to belong to a fixed low-dimensional linear space, and has the
following desirable properties: (1) The construction is a direct
generalization of the previous classical and compositional models, (2) the
construction can approximate entropy solutions to nonlinear scalar
conservation laws with arbitrarily many shocks and their interactions
without an increase in the reduced dimensionality, and (3) the embedded
temporal dynamics in the weights and biases of the INR is linear.

The first property is a key contribution of this work, showing that the
previous compositional model that achieves online-offline decomposition
\cite{rim2023mats} can be expressed as 2-layer INR of this form, making
the deeper analogue also a promising candidate model for achieving the
reduced complexity. 

The third property has important practical implications: Despite the
involved spatial approximation, the temporal dependence of the weight
parameters is linear. This suggests that this model has the potential to
approximate complicated solutions while maintaining regular dependence on
time. Such a feature would enable one to computationally integrate the
representation in time, regardless of the lack of regularity in the
spatial variable due to shocks. We briefly mention here a close relation
between regular-in-time approximation introduced here and classical
numerical methods exploiting similar properties, e.g. Large Time Step
methods \cite{largetimestep1} or Averaged Multivalued Solutions
\cite{Brenier1984}: These methods exploit the fact that classical
characteristic curves are regular in time, that is, they are straight lines.
These methods propagate the characteristics for longer durations than permitted
by the Courant-Friedrichs-Lewy condition and then modify them to ensure proper
shock propagation. We closely follow this principle of time-regular
characteristic evolution with minimal modification. In contrast, standard
finite-volume methods alter the characteristic curves at each time step and do
not preserve their straightness or temporal regularity \cite{LeVeque2002}.

Another benefit of the temporal linearity is the favorable space-time
approximation rates for entropy solutions in single spatial dimension,
which are necessarily of bounded variation. Classical approximation
methods require $\mathcal{O}(\epsilon^{-2})$ degrees of freedom to achieve
$L_1$ error $\epsilon$ for such functions. In contrast, our results rely
on the regularity of input data and the flux function, but not on
specialized regularity classes such as cartoon-like functions
\cite{Petersen2018,Bolcskei2019,devore_hanin_petrova_2021,
  MarcatiOpschoorPetersenSchwab2022,BernerGrohsKutyniokPetersen2022}. That
is, the input data consisting of initial value and the nonlinear flux
function are single dimensional and can be approximated with only
$\mathcal{O}(\epsilon^{-1})$ degrees of freedom. While the maximum width
of our networks matches the degrees of freedom of classical method, we
only require $\mathcal{O}(\epsilon^{-1})$ trainable weights due to the
specialized architecture, and it is also possible to achieve
$\mathcal{O}(1)$ evaluation time per point on a space-time grid.

We note an earlier work using reduced deep networks (RDNs) to approximate
solutions to nonlinear wave problems \cite{rim2020depth}, but the
approximation here is an improvement in two important aspects: (1) In the
earlier work, a bisection algorithm was implemented within the
architecture using the feedforward layers which increased the number of
requisite layers depending on the desired accuracy, and we avoid this here
and the accuracy depends primarily on the width, and (2) The number of
reduced dimensions do not depend on the number of shocks. As only
speculated therein, this work illustrates a more concrete evidence of a
new kind of depth separation, one that involves dimensionality reduction
for deeper neural networks.

More recently, a continuously parametrized family of INRs was successfully
used in the Physics-Informed Neural Networks (PINNs) framework
\cite{Raissi2019,Cai2021,Cuomo2022}, and exhibited improved convergence
behaviors for solving PDEs that were challenging for the original PINNs
\cite{Krishnapriyan2021,cho2023hypernetworkbased}. This general approach
was referred to as Low-Rank PINNs (LR-PINNs).

Our construction here is a similarly parametrized family of INRs with
low-rank architecture we call the Low-Rank Neural Representation (LRNR).
We use the term LRNR to distinguish the technical definition of the
architecture alone, as separate from the machine learning context, and to
contrast with RDNs \cite{rim2020depth} which took on a different approach
to approximating the weak solution. Readers familiar with the LR-PINNs may
view LRNR as a version of the architecture used in Low-Rank PINNs
\cite{cho2023hypernetworkbased}, keeping in mind that the time variable is
treated as a parameter here rather than an input.

The trained LRNRs can serve as a reduced model approximation of the family
of solutions to the parametrized PDEs: A rapid approximation is achieved
due to the small number of degrees of freedom, much like the rapid computations
performed during the online phase for classical reduced models. Moreover,
it is not difficult to see that a reduction in online complexity can be
achieved by utilizing the low rank structure in the weights and biases%
\footnote{During the review of this work, a concrete result demonstrating this
observation has appeared in \cite{Cho2025}.}
much like the classical reduced models \cite{Hesthaven2016}. This is a
property LRNR inherits by being a direct generalization of the
compositional reduced model utilizing transported subspaces
\cite{rim2023mats}.

Due to the compositional structure, a connection can be made to the
optimal transport problem that is associated with conservation laws
\cite{bolley05,ehrlacher19,rim2023mats}. Here, we depart from that
connection and achieve an efficient representation of characteristic
curves arising from nonlinear conservation laws without relying on optimal
transportation.

\subsubsection*{Organization of the Paper}
The subject of this paper spans three different mathematical topics:
The PDE theory of scalar conservation laws, reduced models, and neural networks.
This paper is organized to cover all three aspects while aiming to keep each
section as self-contained as possible.

\begin{itemize}[leftmargin=10pt, topsep=4pt, itemsep=2pt]
  \item \emph{Section~\ref{sec:entropy_theory}.} This section introduces new representation of the entropy
  solutions.
  The PDE theoretical presentation is self-contained and covers the entropy
  solution in its full generality; that is, solutions with arbitrary shock and
  rarefaction waves.

  \item \emph{Section~\ref{sec:lrnr}.} A new neural network architecture called LRNR is introduced, inspired
  by the form of the new entropy solution representation from the previous
  section.

  \item \emph{Section~\ref{sec:LRNR_classical}.} We discuss the close relation between a subclass of LRNRs
  and the
  compositional reduced model called transported subspaces. We 
  develop an important technique called the inverse-bias trick, and we prove
  that classical solutions to scalar conservation laws can be approximated
  efficiently by a LRNR with two layers and rank three.
  This section also serves as an introduction of new
  ideas with relatively light technical details.

  \item \emph{Section~\ref{sec:LRNR_approx_entropy}.} We develop a LRNR
  approximation to the entropy solutions by
  building on the new representation of the entropy solution developed in
  Section 2 and the inverse-bias trick from Sec.~\ref{sec:LRNR_classical}.  We show that
  general entropy solutions can be approximated by a LRNR with five layers and
  rank two. This construction establishes a general 
  low dimensional representation of
  the entropy solution, enabling an efficient representation of arbitrary shock
  wave interactions.
\end{itemize}

This paper introduces a substantial amount of new formulations and definitions.
To aid the readers, the definitions and constructions are listed in a
glossary in Appendix~\ref{sec:glossary}. We will refer the reader to the
glossary throughout the paper. Moreover, for brevity of presentation,
the proofs of lemmas and theorems are postponed to
Appendix~\ref{sec:proofs}.


\section{A compositional form of entropy solutions}
\label{sec:entropy_theory}

In this section, we introduce the compositional forms of the entropy solution to
nonlinear scalar conservation laws. We will first introduce the PDE and the
generalized characteristics as well as the entropy solution. Then we introduce 
\emph{rarefied characteristics}, which enables a representation of the entropy
solution with a single composition without a separate description of the 
rarefaction fans. Finally, we derive yet another alternative representation of the
entropy solution using \emph{relief characteristics} which reveals a certain low
rank structure in the entropy solution.

\subsection{Entropy solutions} \label{sec:entropy_solution}

In this section, we will describe the PDE that will be the focus of the
paper.

For domains $\Domx, \Domt \subset \RR$ let us denote by $C(\Domx)$ the
space of continuous functions, by $C(\Domt; L^1(\Domx))$ space of
continuous functions from $\Domt$ to $L^1(\Domx)$, by $P_0(\Domx)$
piecewise constant functions, and by $P_1(\Domx)$ piecewise linear
functions.

Throughout, our spatial domain will be the unit interval $\Domx := (0,
  1)$, and the temporal domain the interval $\Domt := (0, T)$ for some $T
  \in \RR_+$. The space-time domain is denoted by $\Dom := \Domx \times
  \Domt$. We consider the scalar conservation law for $u: \Dom \to \RR$,
\begin{equation} \label{eq:ivp}
  \left\{
  \begin{aligned}
    \partial_t u + \partial_x (F \circ u) & = 0,
    \quad (x, t) \in \Dom,                                      \\
    u(\cdot\,, 0)                         & = u_0,              \\ 
    u(0\,, \cdot)                         & = u(1\,,\cdot) = 0, \\ 
  \end{aligned}
  \right.
\end{equation}
where the initial value $u_0$ lies in
\begin{equation} \label{eq:cU}
  \cU
  := 
  \{ v \in BV(\Domx) \mid v \text{ has compact support }\},
\end{equation}
and we assume a smooth flux function $F \in C^\infty(\RR)$ that is strictly
convex. We choose $T$ to be small enough so that the zero boundary
conditions are satisfied for all $t \in \Domt$.

For each $u_0 \in \cU$, there exists $\Domt$ for which there is a solution
$u \in C(\Domt; L^1(\Domx))$ such that $u(\cdot, t) \in \cU$ for all $t
  \in \Domt$. See standard texts, for example \cite{Ser99,
  LeVeque2002,Dafermos2010}, for proofs and references as well as numerical
approximations.

It is possible to construct a convergent sequence to the solution $u$ by
considering simpler initial conditions, e.g. piecewise-constant initial
conditions with finitely many jumps. For each $u_0 \in \cU$ there is a
sequence of initial $u_0^{(i)}$ that is a member of
\begin{equation} \label{eq:u0space}
  \overline{\cU}
  :=
  \{
  v \in \cU \cap P_0(\Domx) 
  \mid
  v \text{ has finitely many jumps }
  \}
\end{equation}
whose corresponding solution $u^{(i)}(\cdot, t)$ converge to $u(\cdot, t)$ in
$L^1(\Domx)$ uniformly for all $t \in \Domt$. This fact is used for
constructing approximations to the solution; see for example \cite{Daf72,Dafermos2010}. 
As a result, we can focus on initial conditions $u_0 \in \overline{\cU}$. In
this case, it is known that the solution $u$ only has discontinuities along
finitely many rectifiable curves in $\Dom$ \cite{Ser99}.

Unlike the familiar setting in which one \emph{seeks} solutions to the
initial value problem knowing only the initial condition $u_0$, our focus
here is to \emph{reformulate} the entropy solution in a simplified form.
Accordingly, we will freely make use of the knowledge of the solution $u$.

Assuming that the solution $u \in C(\Domt ; L^1(\Domx))$ is known, we will
first describe the characteristic curves. Let us define the function $G:
  \Domx \times \Domt \to \RR$, 
\begin{equation} \label{eq:char_ode}
  G(\charX, t)
  :=
  \left\{
  \begin{aligned}
    F'(u(\charX, t)) 
     & 
    \quad \text{ if } \llbracket u(\charX, t) \rrbracket = 0, \\
    \frac{\llbracket F(u(\charX, t)) \rrbracket}
    {\llbracket u(\charX, t) \rrbracket}
     & 
    \quad \text{ if } \llbracket u(X, t) \rrbracket \ne 0,    \\
  \end{aligned}
  \right. 
\end{equation}
where $\llbracket v(x, t) \rrbracket := v(x_+, t) - v(x_-, t)$ denotes the
different between right and left limits of $v(\cdot, t)$ at $x$.
The function $G$ yields the velocity of the classical characteristic curves if the
point $(X, t) \in \Dom$ lies in a smooth part of $u$, or yields the shock speed
via the Rankine-Hugoniot jump conditions if it lies on a shock curve where $u$
is discontinuous. 

\emph{Characteristic curves} of the initial value problem \eqref{eq:ivp} are
given by the solutions $X: \Domx \times \Domt \to \RR$ to the family of ordinary
differential equations
\begin{equation} \label{eq:charcurves}
  \begin{aligned}
    \frac{\p \charX}{\p t}
    =
    G(\charX, t),
    \quad
    \charX(x, 0) = x,
    \quad
    (x, t) \in \Domx \times \Domt.
  \end{aligned}
\end{equation}

This formulation for the characteristic curves is also called
\emph{generalized characteristics} \cite{Dafermos2010}. It is
straightforward to see that the problem \eqref{eq:charcurves} has a unique
solution. This is due to the following: The RHS $G$ is locally continuous,
so the unique solution exists locally where it is continuous \cite{Hal80};
the solution is uniformly stable in neighborhoods of the points of
discontinuity due to the entropy conditions, and at points of
discontinuity the shock curve is also uniquely determined by the
Rankine-Hugoniot jump conditions \cite{Ser99}. One also observes that the
unique solution $X$ is a continuous function of the initial value $x$, as
$G$ is of bounded variation.

Let us define the modified version of $G$ by letting $G: \Domx \times
  \Domx \times \Domt \to \RR$ be
\begin{equation} \label{eq:modcharcurves}
  G(\charX, x, t)
  := 
  \left\{
  \begin{aligned}
    F'(u_0(x)) 
     & 
    \quad \text{ if } \llbracket u(\charX, t) \rrbracket = 0, \\
    \frac{\llbracket F(u(\charX, t)) \rrbracket}
    {\llbracket u(\charX, t) \rrbracket}
     & \quad 
    \text{ if } \llbracket u(\charX, t) \rrbracket \ne 0.     \\
  \end{aligned}
  \right. 
\end{equation}
We refer to both functions \eqref{eq:charcurves} and \eqref{eq:modcharcurves}
as $G$ and distinguish them by the number of the independent
variables (i.e. number of input arguments).

Since our problem has no source terms \eqref{eq:ivp}, the solution is constant along
the characteristic curves. So we may rewrite \eqref{eq:charcurves} as
follows,
\begin{equation} \label{eq:charcurves_hmg}
  \frac{\p \charX}{\p t}
  =
  G(\charX, x, t),
  \quad
  \charX (x, 0) = x.
\end{equation}

It is well-known that the characteristic curves may not fill all of
$\Dom$, so a similarity solution is sought to obtain the solution in all
of the space-time domain $\Dom$ \cite{LeVeque2002}. The set of points in
the space-time domain $\Dom$ that cannot be connected to a point in $\Domx
  \times \{0\}$ by a characteristic curve is given by
\begin{equation}
  \begin{aligned}
    \cR := \left\{
    (x, t) \in \Dom 
    \mid 
    x \in \Domx \smallsetminus X(\Domx, t) 
    \right\}.
  \end{aligned}
\end{equation}
We will reformulate the similarity solutions as follows.  Note that there exist
two functions $\xi: \cR \to \RR$ and $\overline{\xi} : \cR \to \Domx$ with the
relation 
\begin{equation} \label{eq:xi}
  \xi(x,t) = \frac{x - \overline{\xi}(x,t)}{t},
  \quad
  (x, t) \in \Dom,
\end{equation}
where $\overline{\xi}$ is a function that yields the origin of the rarefaction
fan in $\Domx$. This function is well-defined, since points in the rarefaction
fan always have a unique origin \cite{Ser99}.

We define a right inverse for nondecreasing real-valued function $g$
defined on a real interval, 
\begin{equation} \label{eq:left-inv}
  g^+(y) := \inf \{x \mid g(x) = y\}.
\end{equation}
Throughout, the right inverse will always be applied to the spatial variable:
That is, for functions of both variables $x \in \Domx$ and $t \in \Domt$, it
will be applied solely to the spatial variable $x$.

The entropy solution $u$ is then given in all of $\Domx \times \Domt$ by
the formula
\begin{equation} \label{eq:entropy}
  u(x, t)
  =
  \left\{
  \begin{aligned}
    u_0 ( \charX^+ (x, t)) 
    \quad & \text{ if } (x, t) \in \Dom \smallsetminus \cR,
    \\
    (F')^{-1} \left( \xi(x,t) \right)
    \quad & \text{ if } (x, t) \in \cR.
  \end{aligned}
  \right.
\end{equation}

\subsection{Rarefied characteristics}

In this section, we modify the differential equation describing the
characteristic curves \eqref{eq:charcurves_hmg}, so that the form of the
entropy solution \eqref{eq:entropy} can be simplified. The modified form of
the solution combines the two cases into just one. We shall achieve this
by extending the initial condition $u_0$ and the characteristic curves $X$
to an extended spatial domain that is larger than the original spatial
domain $\Domx$.

Observe that the set $\overline{\xi}(\cR)$ is 
a finite union of single points, 
since there are only
finitely many jumps in the piecewise constant initial condition $u_0 \in
  \overline{\cU}$ from which the rarefaction fans originate. Let $n_\cR :=
  |\overline{\xi}(\cR)|$ then we order and enumerate these by
\begin{equation}
  \overline{\xi}(\cR)
  =
  \left\{ \xi_1 < \xi_2 < ... < \xi_{n_\cR} \right\}.
\end{equation}
Correspondingly, we define a partition of $\cR = \bigcup_{k=1}^{n_\xi} \cR_{k}$
by the following sets,
\begin{equation}
  \cR_{k}
  :=
  \left\{
  (x, t) \in \cR \mid \overline{\xi}(x,t) = \xi_{k}
  \right\}.
\end{equation}
They are subsets of cones in $\Dom$ associated with a
rarefaction fan emanating from a point $(\xi_{k}, 0) \in \Dom$.

Let us define a collection of $n_\cR$ positive numbers $\eps_{k} =
  u_0(\xi_{k+}) - u_0(\xi_{k-})$ so that $\sum_{k=1}^{n_\cR} \eps_{k} =
  \abs{u_0}_{PV(\Domx)}$, the positive variation of $u_0$ \cite{Holden2002}
given by
\begin{equation}
  \abs{u_0}_{PV(\Domx)}
  =
  \sup_{x_0 < x_1 < \cdots < x_N}
  \sum_{i}
  \max\{u_0(x_i) - u_0(x_{i-1}), 0\},
\end{equation}
where the supreumum is taken over any set of points 
$x_0 < x_1 < \cdots < x_N$ lying in $\Domx$ and for any $N \in \NN$.  Then for each $k \in \NN$, define the largest and
smallest characteristic velocities in the rarefaction fans by
\begin{equation}
  \begin{aligned}
    \nu_{k}^+ 
    := 
    \sup_{(x, t) \in \cR_{k}} \frac{x - \xi_{k}}{t},
    \qquad
    \nu_{k}^- 
    := 
    \inf_{(x, t) \in \cR_{k}} \frac{x - \xi_{k}}{t}.
  \end{aligned}
\end{equation}
Note that $\nu_k^{\pm}$ necessarily take on finite values \cite{Ser99}.

Then we define an extended spatial domain $\hDomx$ as an interval given by
\begin{equation}\label{eq:ext_dom}
  \hDomx := \left(0, \, 1 + \sum_{k=1}^{n_\cR} \eps_k \right)
  =
  \left( 0, 1 + \abs{u_0}_{PV(\Domx)} \right).
\end{equation}
We embed $\Domx$ into $\hDomx$ via the map $\iota: \Domx \to \hDomx$
\begin{equation}
  \iota(x) := x + \sum_{\{k : \xi_{k} < x\}} \eps_{k},
\end{equation}
and assign intervals $\Gamma_k$ of length $\eps_{k}$ starting at $\iota(\xi_k)$,
\begin{equation} \label{eq:Gammax}
  \Gamma_{k} := \iota(\xi_{k}) +  (0,\, \eps_{k}],
  \qquad
  \Gamma_x := \bigcup_{k=1}^{n_\cR} \Gamma_{k}.
\end{equation}
That is, the extended domain $\hDomx$ is a longer interval where $\Gamma_{k}$
have been inserted into points in $\{\xi_k\} \subset \Domx$.  That is, $\hDomx =
  \iota (\Domx) \cup \Gamma_x$ and $\hDomx$ is larger than $\Domx$ by some length 
\begin{equation}
  \abs{\Gamma_x}
  =
  \sum_{k=1}^{n_\cR} |\Gamma_{k}| 
  = 
  \sum_{k=1}^{n_\cR} \eps_{k}
  =
  \abs{u_0}_{PV(\Domx)},
\end{equation}
where $\abs{\cdot}$ denotes the measure of a set.
Next we extend the initial condition $u_0$ to $\hDomx$ by letting $\hat{u}_0 :
  \hDomx \to \RR$ be given as
\begin{equation} \label{eq:uhat}
  \hat{u}_0 \circ \iota := u_0,
  \qquad
  \hat{u}_0 \Big\rvert_{\Gamma_{k}}
  :=
  (F')^{-1} \left(
  \nu^-_{k}
  +
  \frac{\nu^+_k - \nu^-_k}{\eps_k} (x - \iota(\xi_k))
  \right).
\end{equation}
Then $\hat{u}_0$ has no positive jumps (entropy violating shocks \cite{Ser99}) 
and has the same total variation as $u_0$, 
\begin{equation}
  |\hat{u}_0|_{TV(\Domx)} = |u_0|_{TV(\hDomx)}.
\end{equation}
Intuitively speaking, one obtains $\hat{u}_0$ from $u_0$ by replacing positive
jumps with continuous slopes, prescribed the similarity solution.  
See Fig.~\ref{fig:u0extend} for an illustration comparing $u_0$ and $\hat{u}_0$.

\begin{figure}
  \centering
  \includegraphics[width=1.0\textwidth]{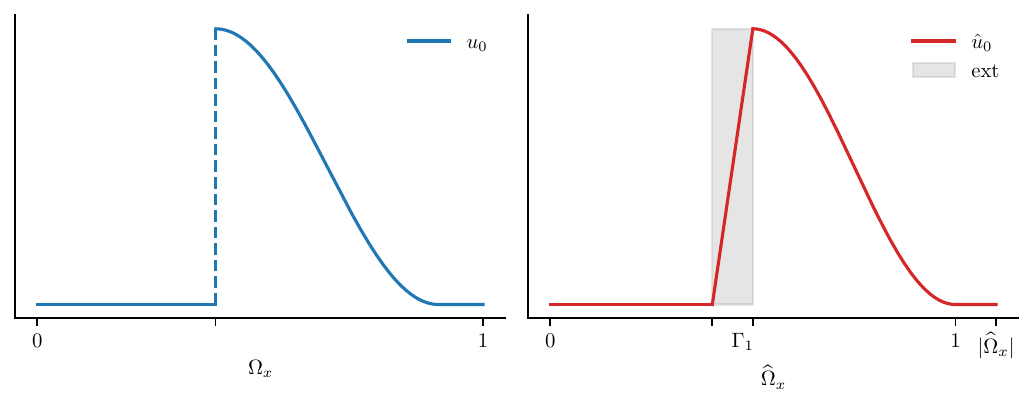}
  \caption{An example of an extended initial condition $\hat{u}_0$ of $u_0$
  in the case of Burgers' flux.
  The newly assigned values of $\hat{u}_0$ in the interval $\Gamma_1$ are
  given by \eqref{eq:uhat}. The values are linear due to the choice of
  the Burgers' flux.}
  \label{fig:u0extend}
\end{figure}

We define \emph{rarefied characteristic curves} $\hcharX: \hDomx \times
  \Domt \to \Domx$ as the solution to the differential equation
\begin{equation} \label{eq:rarefied_charcurves}
  \begin{aligned}
    \frac{\p \hcharX}{\p t}
    =
    \hG (\hcharX, x, t),
    \quad
    \hcharX(x, 0) = \hId(x),
  \end{aligned}
\end{equation}
where $\hId: \hDomx \to \Domx$ is a continuous piecewise linear function satisfying $\hId^+ = \iota$ given by 
\begin{equation}\label{eq:hId}
  \hId (0) = 0, 
  \quad 
  \hId' = \mathbf{1}_{\hDomx \setminus \Gamma_x},
\end{equation}
and $\hG: \Domx \times \hDomx \times \Domt \to \RR$ is an extension of $G$ \eqref{eq:modcharcurves},
\begin{equation} \label{eq:ext_modcharcurves}
  \hG(\hcharX, x, t)
  := 
  \left\{
  \begin{aligned}
    F'(\hat{u}_0(x)) 
     & 
    \quad \text{ if } \llbracket u(\hcharX, t) \rrbracket = 0, \\
    \frac{\llbracket F(u(\hcharX, t)) \rrbracket}
    {\llbracket u(\hcharX, t) \rrbracket}
     & \quad 
    \text{ if } \llbracket u(\hcharX, t) \rrbracket \ne 0.     \\
  \end{aligned}
  \right. 
\end{equation}
The nonlinear RHS term \eqref{eq:ext_modcharcurves} of the ODE
\eqref{eq:rarefied_charcurves} can be written as a function of bounded variation
that depends only on $\hcharX$ and $t$. 
This implies that $\hcharX(\cdot, t)$ is non-decreasing for all $t \in
  \Domt$, regardless of whether $u_0$ has jumps (see, e.g. Ch. 5 of
\cite{Hal80}); positive jumps of $u_0$ are replaced by continuous pieces
in $\hDomx$, negative jumps are replaced by piecewise constants.

See Fig.~\ref{fig:rarefaction} for an illustration showing $\hId$ and its role
in relating $u_0$ with $\hat{u}_0$ \eqref{eq:rarefied_charcurves}. A
diagram depicting the evolution of $\hcharX$ is shown in
Fig.~\ref{fig:hcharX}.

\begin{figure}
  \centering
  \includegraphics[width=0.9\textwidth]{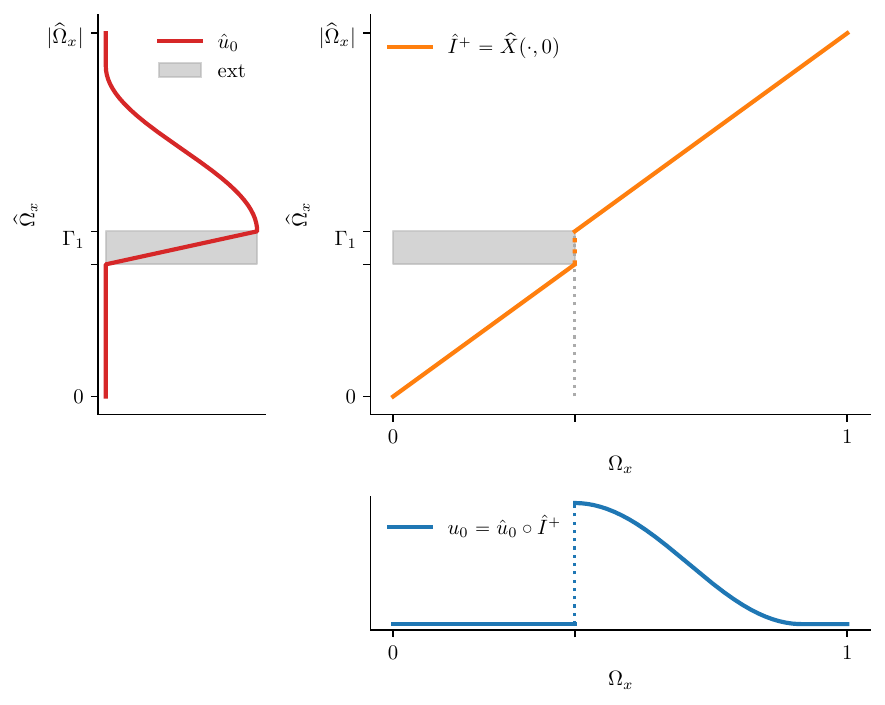}
  \caption{An example of the extension $u_0$ to $\hat{u}_0$ as in
    \eqref{eq:uhat} for the Burgers flux $F(u) = u^2 / 2$.
    The function $\hId^+$ has a jump discontinuity of magnitude $|\Gamma_1|$
      as depicted in the upper right plot. Due to this jump, the values of
      $\hat{u}_0$ inside interval $\Gamma_1$ highlighted in the upper left plot are
      effectively omitted in the composed function $\hat{u}_0 \circ \hId^+$. 
      This effect is
      shown in the lower right plot.}
  \label{fig:rarefaction}
\end{figure}

\begin{lemma}\label{lem:rarefied}
  The entropy solution $u$ to \eqref{eq:ivp} is given by
  \begin{equation} \label{eq:u0Yp}
    u(x, t) = \hat{u}_0 \circ \hcharX^+(x,t).
  \end{equation}
  Recall that $(\cdot)^+$ denotes the left-inverse \eqref{eq:left-inv}.
\end{lemma}

\begin{proof} See Appendix~\ref{proof:lem:rarefied}. \end{proof}

We make a couple of remarks here. Note that new shocks cannot form at
$\hcharX(x_0, t)$ for $x_0 \in \Gamma_x$ since the solution to
\eqref{eq:rarefied_charcurves} is strictly increasing with respect to $x$
at $x_0$. So the solution given by \eqref{eq:u0Yp} at the points
$(\hcharX(x_0, t), t)$, for times before $\hcharX(x_0, \cdot)$ enters a
shock, agrees with the expression for the rarefaction fan. These
observations imply that $\hcharX$ is continuous.

Let us define the \emph{rarefied characteristic curves with multivalued
  inverse},
\begin{equation} \label{eq:hcharX0}
  \hcharX_0(x, t) := \hat{I}(x) + t (F'\circ \hat{u}_0) (x)
\end{equation}
which is in $\Span\{ \hId, F' \circ \hat{u}_0 \}$. This map represents the
classical characteristic curves and the rarefaction waves but represents the
overturned multivalued solution, instead of using shocks \cite{LeVeque2002}.

\begin{figure}
  \centering
  \includegraphics[width=0.9\textwidth]{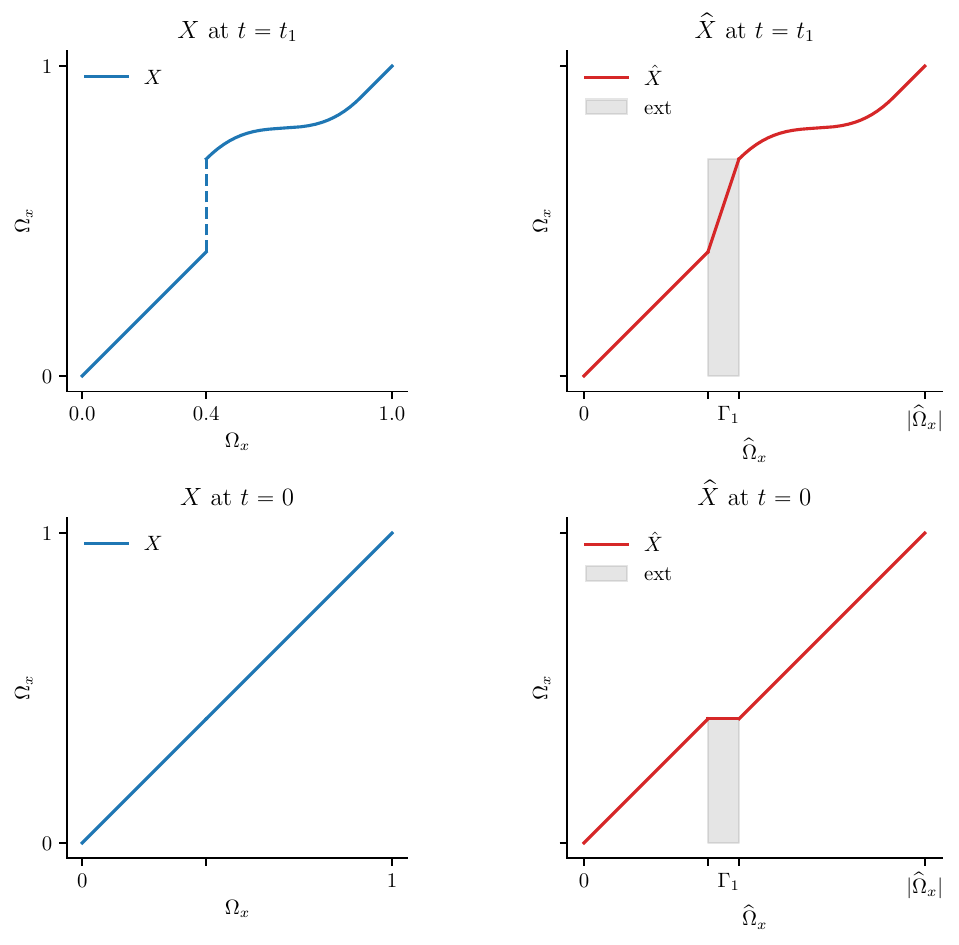}
  \caption{An illustration comparing the characteristic curves $X$ (left) with
    the evolution of rarefied characteristics $\hcharX$ at some later time $t_1 > 0$
    (right).}
  \label{fig:hcharX}
\end{figure}

\subsection{Shock time function} \label{sec:shock_time}

Here we will define the shock time function, which we will be a crucial
component in our constructions in the later sections.

Let us define the parametrized set we call the \emph{shock set} $\shockset$ for $t \in
  \Domt$ to contain all $x \in \hDomx$ such that the preimage of the
singleton $\{x\}$ given by $\hcharX^{-1}( \hcharX(\{x\} ,\, t),\, t)$ is
non-unique. That is, 
\begin{equation}
  \shockset 
  :=
  \left\{ x \in \hDomx
  \mid 
  \exists y \in \Domx 
  \text{ such that } y \ne x, \hcharX(x,t) = \hcharX(y, t)
  \right\}.
\end{equation}
Then $(\shockset)_{t \in \Domt}$ is non-decreasing with respect to $t \in
  \Domt$, i.e. for $0 \le t_1 \le t_2 \le T$ we have $\shocksett{t_1} \subset
  \shocksett{t_2}$ due to the entropy conditions which ensure the characteristics
merge into the shock.  Since $\hcharX$ is a non-decreasing 
continuous function of $x$ for all
$t$ as noted above, $\shockset$ is closed for all $t \in \Domt$.
In plain terms, the set $\shockset$ are points in the extended spatial
domain $x_0 \in \hDomx$ such that, the rarefied characteristic curve leaving
from the point $\hcharX(x_0, \cdot\,)$ has merged into a shock at a time before
$t$.

We define a function $\lambda: \shocksett{T} \to \Domt$ we call the
\emph{shock time function}, given in terms of its level sets.
Specifically, let $\lambda$ be given by
\begin{equation} \label{eq:Yt}
  \left\{x \in \hDomx \mid \lambda(x) \le t \right\} = \shockset.
\end{equation}
Note that $\shockset$ is closed 
for all $t \in \Domt$; 
$\widehat{X}(\cdot, t)$ is non-decreasing and continuous, 
implying that $\shockset$ is 
a union of disjoint closed intervals. It follows that
$\lambda$ is continuous: a closed set $D$ lying in $\Domt$ is compact,
and moreover $\shockset$ is increasing in $t$, hence we have $\lambda^{-1} (D) =
  \shocksett{\max D}$, which is closed.

In intuitive terms, the value $\lambda(x)$ is the time the characteristic
curve starting from $x \in \shocksett{T} \subset \hDomx$ merges into a
shock.

It is left to the reader to show that $\hcharX$ is related to $\hcharX_0$ by
the relation,
\begin{equation} \label{eq:phi0}
  \hcharX(x, t)
  =
  \sup_{\substack{y < x \\ y \in \hDomx \setminus \shockset }}
  \hcharX_0(y, t),
\end{equation}
yielding an alternative formulation of $\hcharX$ that reveals it is
simply $\hcharX_0$ with certain portions replaced by a constant regions.

\subsection{Compositional form}
\label{sec:entropy_compositional}

In the previous section, we have simplified the expression of the entropy
solution from \eqref{eq:entropy} to \eqref{eq:phi0}, by
incorporating the evolution of the rarefaction waves into the
characteristic curves in \eqref{eq:rarefied_charcurves}. Next, we simplify
how the shock is represented in $\hcharX$. 

In the case of classical solutions, 
the characteristic curves \eqref{eq:charcurves}
are represented by a linear superposition,
\begin{equation} \label{eq:classical_superpose}
  X(x, t) = x + t (F' \circ u_0) (x)
  \quad
  \in
  \quad
  \Span \{ \Id, F' \circ u_0 \}.
\end{equation}
We wish to rewrite $\widehat{X}(x, t)$ in a form with similar linear
superpositions, modulo compositions and left-inverses.
Note that one can show the family of functions $\{ \hcharX(\cdot, t) \mid t \in
  \Domt \}$ can have a slowly decaying Kolmogorov width, except in
special cases with simple initial data $u_0$. 
Consider the stationary shock example shown in Fig.~\ref{fig:stationary}. The
shock is represented by the constant region $\shockset$ in $\widehat{X}$, which
is expanding over time. The jump in the derivative in $\widehat{X}$ travels to
the left and the right, which cannot be efficiently represented by the
superposition of a few basis functions.

\begin{figure}
  \centering
  \includegraphics[width=0.9\textwidth]{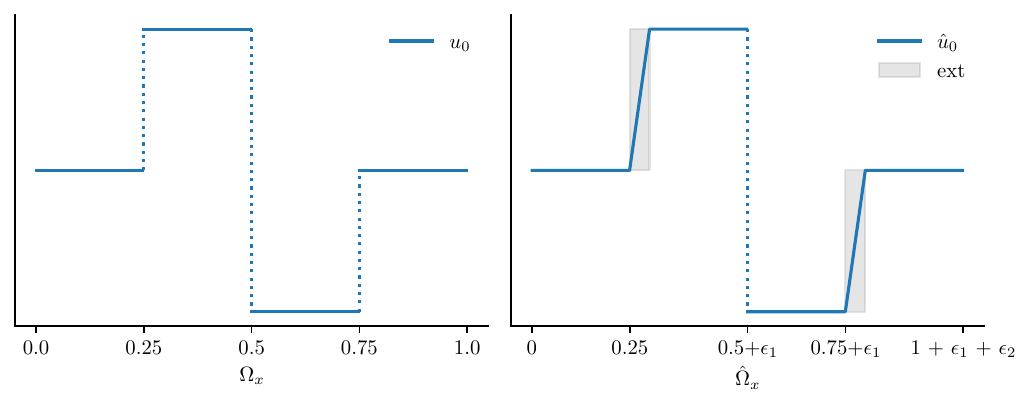}
  \\
  \includegraphics[width=0.7\textwidth]{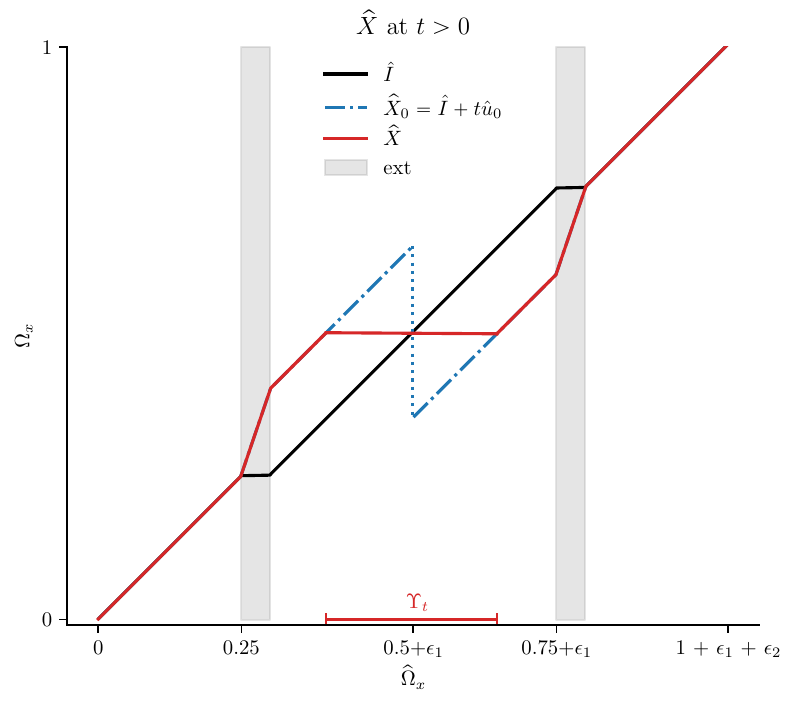}
  \caption{A stationary shock example for the Burgers' equation.
      Due to the jump discontinuity in the
      initial condition $\hat{u}_0$ located at $x=0.5$, 
      the shock forms immediately at
      initial time. This shock is represented as the constant part of the function
      $\hcharX$ in $\shockset$, and the left-inverse of the rarefied characteristic
      $\hcharX^+$ has a jump discontinuity there. As time evolves, the
      characteristics merge into the shock, so $\shockset$ where $\hcharX$ is
      constant correspondingly expands over time. Consequently, the kinks in
      $\hcharX$ at the left and right endpoints of $\shockset$ travels to the left
      and to the right, respectively. (For a plot of the evolution of the rarefied characteristics, see Fig.~\ref{fig:stationary_movie} below.)}
  \label{fig:stationary}
\end{figure}

Our key idea is to cut out the portion of the domain where $\hcharX_0(
  \cdot, t)$ represents the shock, by adding the characteristic function of
$\shockset$ to it. Let us denote the threshold function by
 $\rho := \mathbf{1}_{\RR_+}.$
We call the modified curves the \emph{relief characteristic curves}, given
by
\begin{equation} \label{eq:relief}
  \chkcharX
  =
  \hId 
  +
  t (F' \circ \hat{u}_0)
  +
  C_x \rho \circ (t - \lambda),
\end{equation}
for some sufficiently large constant $C_x \ge \abs{\Domx}$. Then, the entropy
solution is written as
\begin{equation} \label{eq:ucts}
  u(x, t)
  =
  \hat{u}_0
  \circ
  \chkcharX^+ (x, t).
\end{equation}
Once the left-inverse is applied to $\chkcharX$, the resulting map $\chkcharX^+
  : \Domx \to \hDomx$ is constant at points in $\shockset$, because the term
$\rho(t - \lambda(x))$ introduces the constant region and removes the portion of
$\hcharX$ which lies in $\shockset$ by adding a sufficiently large constant to
$\hcharX_0$ whenever it is evaluted at a point in $\shockset$. See
Fig.~\ref{fig:chop} for an illustration.

Written in a generic form, the solution is 
\begin{equation}\label{eq:ucts_generic}
  u(\cdot\,, t)
  =
  \psi_1 \circ 
  \left( \phi_1 + t \phi_2 + \phi_3 \circ (t \varphi_1 - \varphi_2) \right)^+
\end{equation}
where the individual functions are defined as
\begin{equation}
  \psi_1 = \hat{u}_0, 
  \quad
  \phi_1 = \hId, 
  \quad 
  \phi_2 = F' \circ \hat{u}_0, 
  \quad 
  \phi_3 = C_x \rho,
  \quad 
  \varphi_1 = 1, 
  \quad 
  \varphi_2 = \lambda.
\end{equation}
We will call this the \emph{compositional form of the entropy solution}.

In this representation, the time-dependence is in the coefficients of
linear combinations within each composition, and the dependence of the
coefficients with respect to $t$ is linear, in a similar manner as the
classical case. To elaborate, the first linear combination in
  \eqref{eq:ucts_generic} is $t \varphi_1 -
    \varphi_2$ with two coefficient values $t$ and $-1$. Only the first
  coefficient depends on $t$ and the dependence is linear.

In the compositional form, the left-inverse $(\cdot)^+$ persists, leaving
it an implicit form rather than an explicit one. When we consider the
discretization of this form of the entropy solution, we will make use of a
trick we call the \emph{inverse-bias trick} that will allow us to avoid
any direct evaluation of the left-inverse (Sec.~\ref{sec:ibtrick}).

We refer the reader to Appendix~\ref{sec:glossary} for a glossary
of various entropy solution formulations introduced in this section.

\begin{figure}
  \centering
  \includegraphics[width=0.5\textwidth]{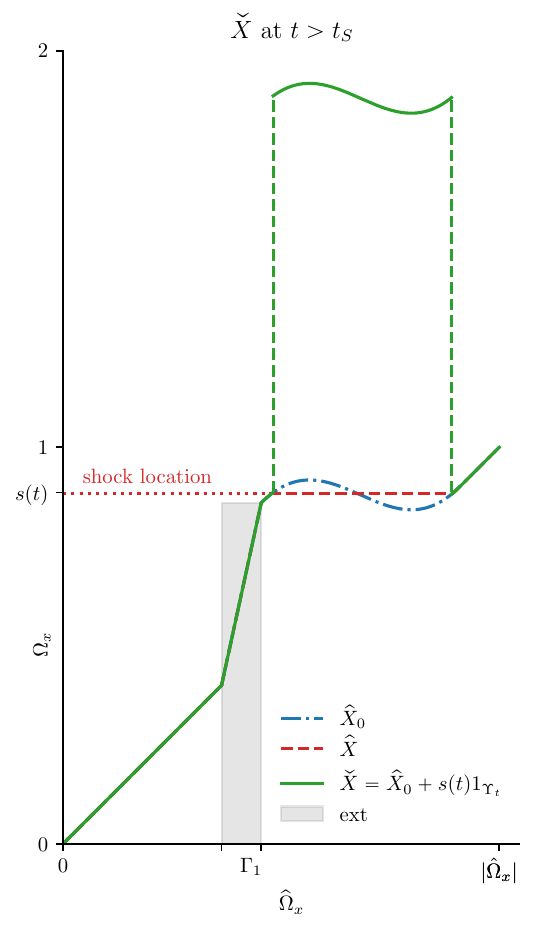}
  \caption{A diagram depicting the relief characteristics $\chkcharX$ after
    shock formation time $t_S$  is shown, along with the rarefied
      characteristics with multivalued inverse $\hcharX_0$. The relief
      characteristics use a different representation of the shock compared to
      the rarefied characteristics $\hcharX$, one that uses the addition of a box
      function $1_{\shockset}$. One observes that left-inverse $\chkcharX^+$ has
      a jump at $s(t)$, and this jump is identical to the corresponding one for
      $\hcharX^+$.}
  \label{fig:chop}
\end{figure}

\section{Low Rank Neural Representation} \label{sec:lrnr}

We introduce a family of feedforward neural networks that satisfy a low
rank condition. The condition is inspired by the fact that the entropy
solution, represented using the relief characteristic curves
\eqref{eq:ucts_generic}, is made up of compositions of low rank
representations. To make this point clear, we draw up a diagram of each
step of the evaluation of \eqref{eq:ucts_generic} along with the associated
linear spaces, as follows. 
\begin{equation} \label{eq:low_rank_upto_compositions}
  \begin{aligned}
               & ~x
    \\                    & \downarrow \qquad \searrow
    \\
               & \,x \qquad  t \varphi_1 - \varphi_2 
               &                                                       &  & 
    \Span\{\Id, \varphi_1, \varphi_2\}
    \\                    & \downarrow \qquad \quad \,\,\,\, \downarrow
    \\ \phi_1  + t \phi_2 & + \phi_3 \circ (t \varphi_1 - \varphi_2)
               &                                                       &  & 
       \Span\{\phi_1, \phi_2, \phi_3 \}
    \\                    & \downarrow
    \\ \psi_1 \circ
    (\phi_1  + & t \phi_2 + \phi_3 \circ (t \varphi_1 - \varphi_2))^+
               &                                                       &  & \Span\{\psi_1 \}
  \end{aligned}
\end{equation}
For example, the operations in the second step can be written as
\begin{equation} \label{eq:step_two}
  \begin{bmatrix}
    \mid      & \mid      & \mid 
    \\
    \phi_1(z) & \phi_2(z) & \phi_3(z) 
    \\
    \mid      & \mid      & \mid 
  \end{bmatrix}
  \begin{bmatrix}
    1 \\ t \\ 1
  \end{bmatrix}
  \quad
  \in
  \Span\{\phi_1, \phi_2, \phi_3 \}
\end{equation}
where $z = (x, t \varphi_1(x) - \varphi_2(x))^T$ is the output from the previous
step. This expression is superposition of a few 
vectors, if one views the function values $\phi_i(z)$ as column vectors of
infinite length. In addition, note that the individual the coefficients 
$[1, t, 1]^T$ has either a linear dependence on $t$ or is constant.

This structure of repeated compositions 
closely resembles the repeated compositions in a neural network. The 
component that requires a close scrutiny is the left-inverse $(\cdot)^+$
and its treatment will be a key discussion in the coming sections
(Sec.~\ref{sec:ibtrick} and Sec.~\ref{sec:LRNR_entropy_overview}). For the moment,
let us focus on establishing an analogy with feedforward neural networks.
We begin by introducing notations and definitions of a neural network. Our
notations are similar to those in common use (see, e.g.
\cite{devore_hanin_petrova_2021, BernerGrohsKutyniokPetersen2022}, for
similar definitions). Throughout, the superscript $\ell$ will index the
layer number.

\begin{definition}[Feedforward neural network architecture]
  Given the dimensions $(M_0, M_1, ...\ ,M_L) \in \NN^{L}$ let
  \begin{equation}
    \WW^{\ell}
    := 
    \{ W : \RR^{M_{\ell - 1}} \to \RR^{M_\ell} \mid W \text{ linear }\},
    \qquad
    \BB^{\ell} := \RR^{M_\ell},
  \end{equation}
  and define the Cartesian products
  \begin{equation}
    \WW := \bigtimes_{\ell = 1}^L \WW^{\ell},
    \qquad
    \BB := \bigtimes_{\ell = 1}^L \BB^{\ell}.
  \end{equation}
  We call the pair $(\WW, \BB)$ a \emph{feedforward neural network architecture}
  or simply \emph{architecture}. We refer to $M = \max_\ell M_\ell$ as the
  \emph{width} and $L$ as the \emph{depth} of the architecture.
\end{definition}

\begin{definition}[Feedforward Neural Networks] Given an architecture $(\WW,
    \BB)$, a \emph{feedforward neural network (NN)} with weights $W = (W^1, ...\ , W^L)
    \in \WW$ and biases $B = (B^1, ...\ , B^L) \in \BB$ is defined as the function
  $h: \RR \to \RR$ with
  \begin{equation}\label{eq:NN}
    h(x; W, B)
    :=
    A^L \circ \sigma \odot A^{L-1} \circ ... \circ \sigma \odot A^1(x)
  \end{equation}
  in which $\odot$ is the entrywise composition, $\sigma$ is the rectified linear unit (ReLU) $\sigma(x) = \max\{0, x\}$, and the affine maps $A_\ell$ are given by
  \begin{equation} \label{eq:NN_affine}
    A^\ell (z) = W^\ell z + B^\ell,
    \qquad 
    W^\ell \in \WW^\ell, 
    \quad
    B^\ell \in \BB^\ell.
  \end{equation}
  We will use the shorthand $h(W, B) := h(\ \cdot\ ; W, B)$.
  \label{def:nn}
\end{definition}

In what follows, we make the observation that the
superpositions in between the compositions in
\eqref{eq:low_rank_upto_compositions} is analogous to the affine
mappings that appear in the neural network \eqref{eq:NN_affine}. With this
viewpoint, we show it is natural to consider a neural network with the affine
mappings with low rank weights and biases, in correspondence to the low
rank representation in \eqref{eq:low_rank_upto_compositions}. 
Let us consider the composition of two functions which are individually in the
span of $\{\phi_i\}_{i=1}^r$ and $\{\varphi_i\}_{i=1}^r$, 
with the coefficients
$\{\gamma_i^3\}_{i=1}^r$ and $\{\gamma_i^2\}_{i=1}^r$, resp., 
\begin{equation} \label{eq:two_compositions}
 \left( \sum_{i=1}^r \gamma^3_i \phi_i \right)
 \circ
 \left( \sum_{i=1}^r \gamma^2_i \varphi_i \right) (x),
\end{equation}
which is a general form of the first composition in
\eqref{eq:low_rank_upto_compositions}. For the simple scenario $r = 1$
and both $\phi_1, \varphi_1: \RR \to \RR$ are scalar valued 1d
functions
representable by 2-layer NNs, their
composition has the structure
shown in black and grey below,
\begin{equation}
  \begin{aligned}
  &(\gamma_1^3 \phi_1) \circ (\gamma_1^2 \graycolor{\varphi_1}) (x)
  \\
  & =
  \gamma_1^3
  \begin{bmatrix} \cdots\cdots \end{bmatrix}
  \sigma \odot\, 
  \Bigg( 
    \underbrace{
    \gamma_1^2
  \begin{bmatrix}
    \vdots \\ \vdots 
  \end{bmatrix}
  \graycolor{\begin{bmatrix} \cdots\cdots \end{bmatrix}}}_{W^2}
  \graycolor{
  \sigma
  \odot\,
  \Bigg(
  \begin{bmatrix}
    \vdots \\ \vdots 
  \end{bmatrix}
  }
  \begin{bmatrix} x \end{bmatrix}
  \graycolor{
  +
  \begin{bmatrix}
    \vdots \\ \vdots 
  \end{bmatrix}
  \Bigg)
  + }
  \begin{bmatrix}
    \vdots \\ \vdots 
  \end{bmatrix}
  \graycolor{ [ \cdot ]}
  \Bigg)
  + [ \cdot ].
  \end{aligned}
\end{equation}
In particular, the weight matrix of the second layer is rank one:
\begin{equation}
  W^2 = \gamma_1^2 U_1^2,
  \qquad
  U^2_1 \text{ is rank-1}.
\end{equation}
Likewise, multiple scalar functions $\{\phi_i\}_{i=1}^r$ 
and $\{\varphi_i\}_{i=1}^r$
lead one to feedforward neural networks
\eqref{eq:NN} whose weights and biases \eqref{eq:NN_affine} are formed
as linear combinations of rank-1 matrices in each layer.
Assuming $r \ll M$, we have arrived at a low-rank form of the weight
matrix.

This derivation illustrates that, in
looking for a low-dimensional description of entropy solutions, 
it is natural to turn our attention to neural
networks whose weights $W$ and biases in $B$ are represented as linear
combinations of a few rank-1 linear transformations. For example, 
we require that
\begin{equation} \label{eq:lincomb_weights}
  \begin{aligned}
     & W^\ell = \sum_{i=1}^r \gamma_i^\ell U_i^\ell,
    \quad
    B^\ell = \sum_{i=1}^r \theta_i^\ell V_i^\ell,
    \qquad \quad
    \left\{
      \begin{aligned}
        \gamma_i^\ell, &\theta_i^\ell \in \RR,
                          \quad
      \\                   
      U_i^\ell &\in \WW^\ell,
                          V_i^\ell \in \BB^\ell  \text{ are rank-1,}
      \\
      \ell &= 1, ...\, , L.
      \end{aligned}
    \right.
  \end{aligned}
\end{equation}
The coefficients $\{\gamma^\ell_i, \theta^\ell_i\}$ correspond to the
coefficients $\{1, t\}$ used in the linear combinations at each compositional
step in \eqref{eq:low_rank_upto_compositions}.  One notes that the latter
coefficients depend on the parameter $t$ (the time variable), and the role of
this parameter is different from either the input $x$ or the fixed weights and
biases $\{U_i^\ell, V_i^\ell \}$ prescribing the linear spaces the weights
and biases belong to. It is therefore natural to view the coefficients
$\{\gamma_i^\ell, \theta_i^\ell \}$ as distinct external parameters; a
specific coefficient set $\{\gamma_i^\ell, \theta_i^\ell\}$ correspond
to a specific neural network belonging to the family of neural networks whose
weights and biases satisfy \eqref{eq:lincomb_weights}. The dependence on $t$ in
\eqref{eq:low_rank_upto_compositions} would correspond to the dependence of the
coefficients $\{\gamma_i^\ell, \theta_i^\ell \}$ on $t$.
On the other hand, the fixed weights and biases correspond to the 
functions in the low rank representation that do not depend on $t$, e.g.
the functions $\{\varphi_i, \phi_j, \psi_k\}$ in \eqref{eq:step_two}.

Motivated by these observations, we define a family of neural networks in
which (1) all of its members have weights and biases that lie in a fixed
low-dimensional linear space, and (2) the particular coefficients that
determine a specific neural network within that family is treated as a
special external parameter.

Let us set a notation for linear combinations. Given a linear space $\VV$,
let $\Phi_r = (\phi_i)_{i=1}^r \subset \VV$ and $A \subset \RR^r$, then
let
\begin{equation}
  A \cdot \Phi_r
  := 
  \left\{ 
  \sum_{i=1}^r \alpha_i \phi_i \mid \alpha \in A 
  \right\}.
\end{equation}
We denote by $\otimes$ the Kronecker product,
and introduce the Hadamard-Kronecker products 
$\otimes_d :
  \RR^{m_1} \times \RR^{m_2} \to \RR^{m_1 m_2 \times m_2}$
and 
${}_d\otimes : \RR^{m_1} \times \RR^{m_2} \to \RR^{m_1 \times m_1 m_2}
$ between two vectors as follows,
\begin{equation}
  u^L \otimesd u^R
  :=
  u^L \otimes \diag(u^R),
  \qquad
  u^L \dotimes u^R
  :=
  \diag(u^L) \otimes u^R.
\end{equation}

We now define the family of neural networks that is central to this paper.

\begin{definition}[Low Rank Neural Representation (LRNR)] \label{def:LRNR}
  Let us be given
  \begin{enumerate}[label=(\roman*)]

    \item an architecture $(\WW, \BB)$ of depth $L$ and width $M$,

    \item members of $\WW_\ell$ and $\BB_\ell$ for each $\ell = 1, ...\,,L$,
          \begin{equation}
            \UU^\ell_r := (U^\ell_i)_{i=1}^r \subset \WW^\ell,
            \qquad
            \VV^\ell_r := (V^\ell_i)_{i=1}^r \subset \BB^\ell,
          \end{equation}
          in which $U^\ell_i$ and $V^\ell_i$ are the products of two vectors
          \begin{equation}
            \left\{
            \begin{aligned}
              U^\ell_i
               & =
              u^{\ell,L}_i \bullet u^{\ell, R}_i,
              \\
              V^\ell_i
               & =
              v^{\ell,L}_i \bullet' v^{\ell, R}_i,
            \end{aligned}
            \right.
            \where \quad
            \bullet, \bullet' \in \{\otimes , {}_d\otimes, \otimes_d \},
          \end{equation}

    \item coefficient sets $C^\ell, D^\ell \subset \RR^r$ collected in
          \begin{equation}
            C
            :=
            \bigtimes_{\ell = 1}^L C^\ell,
            \qquad
            D
            :=
            \bigtimes_{\ell = 1}^L D^\ell.
          \end{equation}

  \end{enumerate}
  Let us denote
  \begin{equation} \label{eq:wrbr}
    \WW_r
    :=
    \bigtimes_{\ell = 1}^L (C^\ell \cdot \UU^\ell_r) \subset \WW,
    \qquad
    \BB_r
    :=
    \bigtimes_{\ell = 1}^L (D^\ell \cdot \VV^\ell_r) \subset \BB.
  \end{equation}
  Then a LRNR is defined as the parametrized family of feedforward neural
  networks
  \begin{equation} \label{eq:lrnr}
    H_r 
    := 
    \left\{
    h (W, B) \mid W \in \WW_r,\ B \in \BB_r 
    \right\}.
  \end{equation}
  We denote by $h (\gamma, \theta)$ the NN $h \in H_r$ that is
  associated with $(\gamma, \theta) \in C \times D$.
  For the LRNR $H_r$ we shall refer to $r$ as its \emph{rank}. 
  Denote by $K$ the \emph{degrees of freedom} (d.o.f), that is,
  the number of trainable or stored parameters, $M$ its \emph{width}, and $L$
  its \emph{depth}. One necessarily has $K \lesssim r L M$.

\end{definition}

The d.o.f refer to the set of parameters appearing in the weights or
biases that is unconstrained. In deep learning models, weight parameters
are often constrained in various ways, e.g. to be equal to each other, and
these constrained parameters are commonly referred to as \emph{shared}
weights \cite{Goodfellow2016}. As an example, a one dimensional
convolution layer with a single channel and a kernel of width three has
d.o.f $K=3$ but the width of that layer $M$ may be different.

We remark that in the LR-PINNs \cite{cho2023hypernetworkbased} the
coefficient $\theta_{i, \ell}$ for the biases were treated as fixed, and
the coefficients $\gamma_{i, \ell}$ were functions of the parameters of
the PDE.

In summary, a LRNR is a family of feedforward networks that have weights
and bias at each individual layer that belong to a fixed linear subspace
of dimension at most $r$. The aim of the following sections is to
establish the connection between LRNR (Def.~\ref{def:LRNR}) and the entropy
solution \eqref{eq:low_rank_upto_compositions}. Throughout this work, our
focus is on scalar problems in a single spatial dimension, so the input
and output dimensions are one, that is, $M_0 = M_L = 1$.

We refer the reader to Appendix~\ref{sec:glossary} for a
glossary of various notations and specifications regarding LRNRs introduced in
this section. 

\section{LRNR approximation of classical solutions} \label{sec:LRNR_classical}

In this section, we will discuss how a LRNR can be used to approximate
classical solutions to \eqref{eq:ivp}. In this case, the solution is given
by the method of characteristics \eqref{eq:classical_superpose}, 
and is known to have an efficient
approximation using transported subspaces \cite{rim2023mats}.
For the classical case, it is mandatory to suspend the assumption that the initial
condition is piecewise constant with finitely many jumps 
(that is, $u_0 \in \overline{\cU}$). Then, 
for a smooth initial condition $u_0 \in \cU$ \eqref{eq:cU} the classical
solution is 
\begin{equation} \label{eq:ivp_classical}
u(x,t) = u_0 \circ X^{-1}(x, t),
\end{equation}
with classical characteristics $X$ that solve the ODE (simpler version of \eqref{eq:char_ode}), 
\begin{equation} \label{eq:classical_charcurves}
  \begin{aligned}
    \frac{\p \charX}{\p t}
    =
    F'(u(\charX, t)),
    \quad
    \charX(x, 0) = x,
    \quad
    (x, t) \in \Domx \times \Domt.
  \end{aligned}
\end{equation}
Due to the absence of rarefaction and shock waves in the classical case, the
solutions simplifies in two major ways: 
(1) The inverse $(\cdot)^{-1}$ is used in the
solution \eqref{eq:ivp_classical} rather than the left-inverse $(\cdot)^{+}$
used in (\ref{eq:u0Yp}, \ref{eq:ucts}),
and (2) the initial condition $u_0$ and the characteristic curves $X$ are 
directly used instead of their extensions (i.e. $\hat{u}_0$, $\hcharX$ or
$\chkcharX$).

We focus on the fact that the classical solution 
\eqref{eq:ivp_classical} resembles the compositions of superpositions
\eqref{eq:two_compositions} we have encountered when deriving LRNRs
in the previous section. We may write the classical solution in a similar form,
\begin{equation}
  \begin{aligned}
    u(x, t)
    &=
    \left( \sum_{i=1}^r \gamma^2_i(t) \phi_i \right)
    \circ
    \left( \sum_{i=1}^r \gamma^1_i(t) \varphi_i \right)^{-1} (x)
    \\
    &\qquad \text{ with }
    r = 3
    \quad \text{ and } \quad
    \left\{
    \begin{aligned}
    \gamma^1_1 &= 1,
    &
    \gamma^1_2 &= t,
    &
    \gamma^1_3 &= 0,
    \\
    \varphi_1 &= \Id
    &
    \varphi_2 &= F' \circ u_0,
    &
    \\
    \gamma^2_1 &= 1,
    &
    \gamma^2_2 &= 0,
    &
    \gamma^2_3 &= 0,
    \\
    \phi_1 &= u_0.
    \end{aligned}
  \right.
  \end{aligned}
\end{equation}
An important difference with \eqref{eq:two_compositions} 
is the presence of the inverse $(\cdot)^{-1}$.
This must be carefully handled
to approximate this compositional form of the classical solution using 
LRNRs. This issue 
arose in the compositional reduced models and other neural network
approximations \cite{taddei2020,Laakmann2021,rim2023mats}. 
Our discussion here centers specifically on transported 
subspaces \cite{rim2023mats} which
are most directly related to LRNRs.

We will recall the transported subspaces, and discuss how the inverse that
appears in the composition can be avoided straightforwardly using a trick
we call the \emph{inverse-bias trick}. Starting with a given transported
subspace in its original form, we will first show that one can rewrite it
in a certain discrete form using this trick. Next, we will show that a
transported subspace in this discrete form is equivalent to a 2-layer LRNR
of comparable complexity. Finally, we will approximate the classical
solution to the scalar conservation law using a 2-layer LRNR.
The techniques developed in this section for approximating the classical
solutions will be used throughout the subsequent Sec.~\ref{sec:LRNR_approx_entropy} that addresses the
entropy solution. We refer the reader to Appendix~\ref{sec:glossary} for a
summary of notions including the transported subspaces, the inverse-bias trick,
and the LRNR approximation of the classical solution introduced in this
section. 

\subsection{The transported subspaces and the inverse-bias trick}
\label{sec:ibtrick}

We will review the definition of transported subspaces and introduce an
approximation technique which allows one to avoid calculating the
left-inverses that typically appear in method of characteristics, e.g. in
(\ref{eq:classical_superpose}, \ref{eq:ucts_generic}). 
Then we will derive a useful estimate for
transport subspaces when this technique is applied.

Throughout this section and the next, we will use the notation $\Domx$ and
$\hDomx$ to denote arbitrary open intervals in $\RR$, however in the
subsequent sections they will play roles analogous to the spatial domain
in \eqref{eq:ivp} and the extended spatial domain \eqref{eq:ext_dom} above.

\begin{definition}[Admissible coefficients]  \label{def:admit}
  Given a domain $\Domx \subset \RR$ and a set of $r$ functions $\Phi_r =
    (\phi_i)_{i=1}^r \subset P_1(\RR) \cap C(\RR)$ we say a coefficient set $A
    \subset \RR^r$ is \emph{admissible for $\Phi_r$ on $\Domx$} if
  \begin{enumerate}[label=(\roman*)]

    \item there exist constants $c_A, C_A > 0$ such that $c_A \le g' \le C_A$
          for all $g \in A \cdot \Phi_r$,

    \item $g(\RR) \supset \Domx$ for all $g \in A \cdot \Phi_r$.

  \end{enumerate}
  We will also write $\hDomx := \bigcup_{g \in A \cdot \Phi_r} g^{-1}(\Domx)$.
\end{definition}

\begin{definition}[Transported subspace] \label{def:transported_subspace}
  Suppose we are given a domain $\Domx \subset \RR$, linearly independent 
  functions
  \begin{equation} 
    \begin{aligned}
      \Phi_r & = (\phi_i)_{i=1}^r \subset P_1(\RR) \cap C(\RR),
      \\
      \Psi_r & = (\psi_i)_{i=1}^r \subset P_0(\RR) \cap BV(\RR),
    \end{aligned}
  \end{equation}
  and two coefficient sets, $A \subset \RR^r$ admissible for $\Phi_r$ on
  $\Domx$ and $B \subset \RR^r$ bounded.

  The \emph{transported subspace} is the family $H_r$ given by
  \begin{equation}
    \begin{aligned}
      \bar{H}_r & := (B \cdot \Psi_r) \circ (A \cdot \Phi_r)^{-1}
      \\
                & =
      \left\{ \bar{h} \in P_0(\Domx) \cap BV(\Domx) \ |\ 
      \bar{h} = f \circ g^{-1},\
      f \in B \cdot \Psi_r,\
      g \in A \cdot \Phi_r
      \right\}.
    \end{aligned}
  \end{equation}
  We denote by $\bar{h}(\alpha, \beta)$
  the member $\bar{h} \in \bar{H}_r$ where $(\alpha, \beta)
    \in A \times B$.

\end{definition}

The functions in $\Phi_r$ and $\Psi_r$ can be viewed as reduced basis
functions or snapshots.

Next, we introduce the inverse-bias trick. Observe the important
advantages in choosing $\Psi_r$ to be a set of piecewise constant
functions. Expressing a member $f \in B \cdot \Psi_r$ as the sum
\begin{equation} \label{eq:shallow}
  f(x) = c_0 + \sum_{k} c_k \rho(x - x_k),
\end{equation}
the transported function $h = f \circ g^{-1}$ for $g \in A \cdot
  \Phi_r$ can be rewritten
\begin{equation} \label{eq:inv-bias}
  f \circ g^{-1} (x)
  = 
  c_0 + \sum_{k} c_k \rho( g^{-1}(x) - x_k)
  = 
  c_0 + \sum_{k} c_k \rho( x - g(x_k)).
\end{equation}
In this form one readily observes that the dependence of $h$ on the function $g$
is only at the discrete set of points $\{x_k\}$, and that the inverse $g^{-1}$
is replaced by the forward map $g$. 
This follows from the simple fact that the monotonicity of $g$ implies
\begin{equation} \label{eq:inv-bias-one}
  \begin{aligned}
    \rho(g^{-1}(x) - x_k)
     & =
    \begin{cases}
      1 & \text{ if } g^{-1}(x) \ge x_k,
      \\
      0 & \text{ if } g^{-1}(x) < x_k,
    \end{cases}
    \\                   & =
                        \begin{cases}
      1 & \text{ if } x \ge g(x_k),
      \\
      0 & \text{ if } x < g(x_k),
    \end{cases}
    \\                   & =
                        \rho( x - g(x_k)).
  \end{aligned}
\end{equation}
We refer to this technique of removing the inverse applied to $g$ by instead
applying the forward map $g$ on the bias terms, the \emph{inverse-bias trick}. 

Careful consideration in Lem.~\ref{lem:compose_error} below shows that the inverse bias trick
applies to non-decreasing functions that have a left-inverse but not necessarily an inverse.
While in this paper analytical expression for $g$ originate from hyperbolic
conservation laws and are thus left-invertible, we do not require invertibility
of their numerical approximations $g_\eps$. Indeed, since the right hand
side of the inverse bias trick is independent of the inverse, we have
$\rho(g^{-1}(x) - x_k) = \rho(x - g(x_k) \approx \rho(x - g_\eps(x))$ for
which we can easily control the error without assuming invertibility of $g_\eps$, as
e.g. in Lem.~\ref{lem:compose_error}. 
In principle, one can further apply the right hand side to
arbitrary non-invertible functions $g$ such as multi-valued characteristics at
shocks, but this would invalidate the left hand side and with it the original meaning of
the identity. For such cases, we require additional techinques, as we discuss in
Sec.~\ref{sec:LRNR_approx_entropy}.
Note that
this issue of invertibility has been a source of difficulties discussed in
related approaches \cite{Hesthaven2016,CagniartMadayStamm2019,taddei2020}.

In the following lemma the approximating function is 
continuous and piecewise linear; therein
the jump function $\rho$ is replaced by a continuous piecewise
linear approximation $\rho_\eps$ defined as 
\begin{equation} \label{eq:rho-eps}
  \rho_\eps(x)
  :=
  \frac{1}{\eps}
  \left[
    \sigma(x + \frac{\eps}{2})
    -
    \sigma(x - \frac{\eps}{2})
    \right].
\end{equation}
For a finite set $\cX \subset \hDomx$ and $g: \hDomx \to \RR$, we will make use
of the discrete norm $\Norm{g}{L^\infty(\cX)} := \max_{x_i \in
    \cX}\abs{g(x_i)}$.

\begin{lemma} \label{lem:compose_error}
  Let $f: \RR \to \RR$ be of bounded variation and supported in
  $\widehat{\Dom}_x$, $g: \RR \to \RR$ lying in $W^{1, \infty}(\RR)$
  non-decreasing, $g'$ is compactly supported in $\widehat{\Dom}_x$ and $\rg{(g)} \supset \Domx$. 
  Suppose $f_\eps$ and $g_\eps$ are continuous piecewise linear functions, and
  $f_\eps$ is of the form $f_\eps = \sum_{i=1}^N c_i \rho_\eps(x - x_i)$ for
  some  grid points $\cX = \{x_i\}_{i=1}^N \subset \hDomx$ such that
  $\SemiNormlr{f_\eps}{TV(\hDomx)} \le \SemiNormlr{f}{TV(\hDomx)}$. 

  Define the function $f_\eps \circ^+ g_\eps \in C(\Domx) \cap P_1(\Domx)$
  as
  \begin{equation} \label{eq:cts_ibtrick}
    f_\eps \circ^+ g_\eps (x)
    :=
    \sum_{i=1}^N c_i \rho_\eps\left(x - g_\eps(x_i)\right).
  \end{equation}
  Then we have for parameter $\eps > 0$ sufficiently small
  depending on the grid width,
  \begin{equation}
    \begin{aligned}
       & \Norm{f \circ g^+ - f_\eps \circ^+ g_\eps}{L^1(\Domx)}
      \\
       & \le
      \Norm{g'}{L^\infty (\hDomx)} \Norm{f - f_\eps}{L^1(\hDomx)}
      \\
       & +
      \SemiNormlr{f}{TV(\hDomx)} \left[
        \left( 1 + \Norm{g'}{L^\infty(\hDomx)} \right)
        \Norm{\rho - \rho_\eps}{L^1(\RR)}
        + \Norm{g - g_\eps}{L^\infty(\cX)}
        \right].
    \end{aligned}
  \end{equation}
  In the special case $f \in P_0(\hDomx) \cap BV(\hDomx)$ the inequality reduces to
  \begin{equation}
    \begin{aligned}
       & \Norm{f \circ g^+ - f_\eps \circ^+ g_\eps}{L^1(\Domx)}
      \\
       & \le
      \SemiNormlr{f}{TV(\hDomx)} \left(
      \Norm{\rho - \rho_\eps}{L^1(\RR)}
      + \Norm{g - g_\eps}{L^\infty(\cX)}
      \right).
    \end{aligned}
  \end{equation}
\end{lemma}

\begin{proof} See Appendix~\ref{proof:lem:compose_error}. \end{proof}

Although the inverse-bias trick might appear to apply solely in the single
dimensional setting, it extends to multiple dimensions when one views the
neural network approximation as a superposition of ridge functions, for instance
via the Radon transform
\cite{Natterer,Helgason2011,Bonneel15slice,Rim18split,Rim18mr}. Close
connections have been made to neural networks, e.g. see \cite{CD89,
  Ongie2020A,Unser2023}.

\subsection{LRNR approximation of transported subspaces} \label{sec:ts}

In this section, we will show that a 2-layer version of LRNR, a simple
special case of the general definition of LRNRs introduced in
Sec.~\ref{sec:lrnr}, is able to approximate any transported subspace. The rank
of the approximating LRNR is equivalent to the dimension of the
transported subspace.

\begin{example}[A 2-layer version of LRNR] \label{expl:2layer_LRNR}
  Suppose we are given 
  \begin{enumerate}[label=(\roman*)]

    \item a set of vectors
          \begin{equation}
            \begin{aligned}
              \UU^1_r & = (U^1_i)_{i=1}^r \subset \RR^M, \,\,\,
              \UU^2_r = (U^2_i)_{i=1}^r \subset \RR^M, \,\,\,
              \VV^1_r = (V^1_i)_{i=1}^r \subset \RR^M,
              \\
              \VV^2_r & = (V^2_i)_{i=1}^r \subset \RR,
            \end{aligned}
          \end{equation}
          so that the dimensions are at most $r \in \NN$, that is,
          \begin{equation}
            \dim \UU^\ell_r,\, \dim \VV^\ell_r \le r,
            \quad \text{ for } \ell = 1, 2,
          \end{equation}

    \item coefficient sets $C = (C_1, C_2), D = (D_1, D_2)$ both in $\RR^{2r}$.

  \end{enumerate}
  Then let us denote,
  \begin{equation}
    \WW_r := (C_1 \cdot \UU^1_r ) \times (C_2 \cdot \UU^2_r ),
    \quad
    \BB_r := (D_1 \cdot \VV^1_r ) \times (D_2 \cdot \UU^2_r ).
  \end{equation}
  The corresponding LRNR $H_r$ of \emph{rank} r, \emph{d.o.f} $K$, \emph{width}
  $M$, \emph{depth} $L=2$ is given as the family of functions
  \begin{equation} \label{eq:2layer_LRNR}
    H_r
    :=
    \left\{
    h \in P_1(\RR) \cap C(\RR)
    \ \left\lvert \ 
    \begin{aligned}
       & h(x) = w_2 \cdot \sigma ( w_1 x + b_1) + b_2,\ \\
       & (w_1, w_2) \in \WW_{r},\ (b_1, b_2) \in \BB_r
    \end{aligned}
    \right.
    \right\}.
  \end{equation}
  We will denote by $h(\gamma, \theta)$ the member $h \in
    H_r$ whose coefficients are $(\gamma, \theta) \in C \times D$.
\end{example}

We now show that every transported subspace $\bar{H}_{r-1}$ can be
uniformly approximated by a LRNR $H_{r}$ with comparable rank. LRNR needs
one more rank due to a technicality in representing the jump function
using \eqref{eq:rho-eps}.

\begin{theorem} \label{lem:shallow_approx}
  For each transported subspace $\bar{H}_{r-1}$ with 
  basis functions in $\Psi_{r-1}$ taking
  on the form
  \begin{equation}
    \psi_i (x) = \sum_{j=1}^{N_i} c_{ij} \rho(x - x_{ij}),
    \quad
    i = 1, ... , r,
  \end{equation}
  a pair of coefficient sets $(A, B)$, and a given parameter $\eps > 0$, 
  there exists a
  LRNR $H_{r}$ that has d.o.f $K \lesssim \sum_{i=1}^rN_i$, 
  width $M
    \sim K$, and depth $L=2$, along with coefficient sets $C, D$ and an affine map
  $\mu: A \times B \to C \times D$
  mapping
  $\mu(\alpha, \beta) = (\gamma, \theta),$
  such that 
  the members $\bar{h}(\alpha, \beta) \in \bar{H}_{r-1}$ and $h
    (\gamma, \theta ) \in H_{r}$ satisfy
  \begin{equation}
    \Norm{ \bar{h} - h }{L^1(\Domx)}
    \le
    \left( \sum_i \abs{\psi_i}_{TV(\hDomx)} \right)
    \left( \sup_{\beta \in B} \Norm{\beta}{\infty} \right)
    \Norm{\rho - \rho_\eps}{L^1(\Domx)}.
  \end{equation}
\end{theorem}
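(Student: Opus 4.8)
The plan is to construct the LRNR $H_r$ explicitly by mimicking the two-layer transported subspace structure, representing the first layer so that it encodes the affine maps $g \in A \cdot \Phi_r$ evaluated at the grid points $\{x_{ij}\}$, and the second layer so that it assembles the sum $\sum_{ij} c_{ij}\rho_\eps(x - g(x_{ij}))$. Concretely, since each $\phi_k \in \Phi_{r-1}$ is continuous piecewise linear, a member $g = \sum_k \alpha_k \phi_k$ is affine on each relevant subinterval, so $g(x_{ij})$ is itself an affine function of the coefficient vector $\alpha$; this is what will let the map $\mu:(\alpha,\beta)\mapsto(\gamma,\theta)$ be affine. The first weight $w_1$ should be a fixed vector (built from the $U^1_i$) that copies the input $x$ into $M \sim \sum_i N_i$ coordinates, and the first bias $b_1$ should, in each coordinate, subtract the value $g(x_{ij})$ shifted by $\pm\eps/2$ — these shift values lie in a linear space of dimension $\le r$ spanned by the $V^1_i$, with coefficients depending affinely on $\alpha$. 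The second weight $w_2$ is then a fixed combination (the $U^2_i$ with coefficients built from $\beta$, hence in an $r$-dimensional space) that forms the telescoping differences realizing $\rho_\eps$ as in \cref{eq:rho-eps}, scaled by the $c_{ij}$, and the second bias $b_2$ carries the constant term $c_0$. This realizes exactly $h = f_\eps \circ^+ g_\eps$ with $g_\eps = g$ (no approximation of $g$ needed since $g$ is already piecewise linear) and $f_\eps = \sum c_{ij}\rho_\eps(\cdot - x_{ij})$.

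The second step is to invoke \Cref{lem:compose_error} with $g_\eps = g$ exactly and $f = \psi_i$ (scaled by $\beta_i$), so that the terms $\Norm{g - g_\eps}{L^\infty(\cX)}$ and the $\Norm{g'}{L^\infty}$-weighted contributions either vanish or collapse, since each $\psi_i \in P_0(\hDomx)\cap BV(\hDomx)$ places us in the special case of the lemma where the bound reduces to $\SemiNormlr{f}{TV(\hDomx)}\,\Norm{\rho - \rho_\eps}{L^1}$ (the $\Norm{g-g_\eps}{L^\infty(\cX)}$ term being zero). Summing over the $r$ basis functions with coefficients $\beta_i$ and using $\abs{\beta_i} \le \Norm{\beta}{\infty} \le \sup_{\beta\in B}\Norm{\beta}{\infty}$, together with linearity of $\bar h(\alpha,\beta) = \sum_i \beta_i\,\psi_i \circ g^{-1}$ and of its LRNR counterpart, yields the claimed estimate. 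I would also need to verify the bookkeeping on d.o.f.: the unconstrained parameters are the coefficients $\gamma,\theta$ (at most $2r$ per layer, so $O(r)$) plus whatever is needed to pin down the fixed $U^\ell_i, V^\ell_i$, and since the construction uses $M \sim \sum_i N_i$ neurons in the hidden layer with weights lying in a rank-$r$ space, one gets $K \lesssim \sum_i N_i$ after accounting for sharing.

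The main obstacle I anticipate is the rank accounting — in particular making the "$+1$" in passing from $\bar H_{r-1}$ to $H_r$ precise. The subtlety is that $\rho_\eps(x - a) = \tfrac1\eps[\sigma(x-a+\tfrac\eps2) - \sigma(x-a-\tfrac\eps2)]$ needs two ReLU units per grid point and, crucially, the constant offset $c_0$ in \cref{eq:shallow} together with the need to represent the affine (not merely linear) dependence of the shifts on $\alpha$ forces an extra basis direction in the coefficient space: the "constant" coefficient $1$ must be adjoined to the span of the time-like coefficients, which is exactly the mechanism by which a rank-$(r-1)$ transported subspace becomes a rank-$r$ LRNR. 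I would need to lay out the vectors $U^\ell_i, V^\ell_i$ and the sets $C^\ell, D^\ell$ carefully enough that the dimension count $\le r$ is manifest in each layer, and check that the Hadamard–Kronecker product structure ($\bullet \in \{\otimes, {}_d\otimes, \otimes_d\}$) in \Cref{def:LRNR} is flexible enough to realize the block pattern (copies of $x$, diagonal scaling by $c_{ij}$, telescoping by $\pm 1$). The rest — the triangle-inequality estimate and the reduction to the $P_0$ case of \Cref{lem:compose_error} — should be routine.
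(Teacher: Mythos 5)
Your proposal is correct and follows essentially the same route as the paper's proof: apply \Cref{lem:compose_error} in the piecewise-constant special case with $g_\eps$ agreeing with $g$ on the grid $\cX$ so that the $\Norm{g-g_\eps}{L^\infty(\cX)}$ term vanishes, realize $f_\eps \circ^+ g_\eps$ as an explicit two-layer network via the inverse-bias trick (two ReLUs per grid point for $\rho_\eps$), and package the weights and biases into rank-$r$ subspaces with an affine map $\mu$, the extra rank coming precisely from adjoining the constant coefficient $1$ (the paper sets $\theta^1 = [\alpha_1,\dots,\alpha_{r-1},1]$). Your minor variations — taking $g_\eps = g$ outright rather than an interpolant, and summing the lemma over the $\psi_i$ rather than applying it once to $f(\beta)$ and then bounding $\SemiNormlr{f(\beta)}{TV}$ — are equivalent and yield the same estimate.
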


\begin{proof} See Appendix~\ref{proof:lem:shallow_approx}. \end{proof}

The parameter error can be made arbitrarily small by increasing the width
$K$, so $\bar{H}_{r-1}$ is in the $L^1$ closure of $H_{r}$. Note also that
the total variation norm can be relaxed to a Besov norm, although we will
not pursue the details here.

\subsection{LRNR approximation of classical solutions}
Now that we have shown that there are LRNR approximations of transported subspaces
(Thm.~\ref{lem:shallow_approx}), it is natural to ask whether the classical solution
can be approximated uniformly well by a LRNR. We show that this is indeed the case.

\begin{theorem} \label{thm:lrnr_classical}
  For the classical solution 
  \begin{equation}
    u(t) = u_0 \circ X^{-1}(\cdot, t), 
    \quad
    t \in \Domt,
    \quad
    u_0 \in C^1(\Domx) \cap \cU,
  \end{equation}
  there is a LRNR $H_r$ with rank $r = 3$, d.o.f $K$, width $M \sim K$, and depth 
  $L=2$ 
  such that, for coefficients 
  $\gamma(t), \theta(t)$ that
  are linear or constant functions of $t$, 
  the member $h(t) = h(\gamma(t), \theta(t)) \in H_r$
  achieves the uniform error 
  \begin{equation} \label{eq:LRNR_classical}
    \Norm{ u(t) - h(t) }{L^1(\Domx)}
    \lesssim
    \frac{1}{K}
    \abs{u_0}_{TV(\Domx)}
    \left(
    1 + T \Norm{F''}{L^\infty(u_0(\Domx))} \Norm{u_0'}{L^\infty(\Domx)}
    \right).
  \end{equation}
\end{theorem}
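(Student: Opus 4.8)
The plan is to realize the classical solution as a member of a two-dimensional transported subspace after discretizing the data, convert that transported subspace into a rank-$3$, depth-$2$ LRNR via \Cref{lem:shallow_approx}, and then bound the discretization error with a change of variables together with (the special case of) \Cref{lem:compose_error}.

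Since $u_0 \in C^1(\Domx)\cap\cU$ lies in the classical regime, $T$ is small enough that no shock forms, so $X_t := X(\cdot,t) = \Id + t\,(F'\circ u_0)$ is increasing for every $t\le T$ and $u(\cdot,t) = u_0\circ X_t^{-1}$. First I would fix a grid $\cX = \{x_k\}$ of $\sim K$ nodes in $\Domx$, take $\tilde u_0 \in P_0(\Domx)$ a piecewise-constant sampling of $u_0$ on $\cX$, so that $\abs{\tilde u_0}_{TV}\le\abs{u_0}_{TV}$ and $\Norm{u_0 - \tilde u_0}{L^1(\Domx)} \lesssim \tfrac1K\abs{u_0}_{TV(\Domx)}$, and take $\tilde\varphi\in P_1(\RR)\cap C(\RR)$ the continuous piecewise-linear interpolant of $F'\circ u_0$ on $\cX$, a bounded-slope function with $\tilde\varphi(x_k) = F'(u_0(x_k))$. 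Then with $\Phi_2 := (\Id, \tilde\varphi)$, $\Psi_2 := (\tilde u_0, \psi_2)$ (a harmless linearly-independent dummy $\psi_2$ carrying the coefficient $0$), $A := \{(1,t):t\in\Domt\}$ admissible for $\Phi_2$ on $\Domx$ (for $T$ small the elements of $A\cdot\Phi_2$ have derivative bounded above and bounded below away from $0$) and $B := \{(1,0)\}$ bounded, the function $\tilde u_0\circ(\Id + t\tilde\varphi)^{-1}$ is exactly the member $\bar h(\alpha(t),\beta)$ of the transported subspace $\bar H_2$ with $\alpha(t)=(1,t)$ and $\beta=(1,0)$, both affine in $t$. \Cref{lem:shallow_approx} then furnishes a rank-$3$, depth-$2$ LRNR $H_3$ whose d.o.f.\ $K$ and width $M\sim K$ are comparable to the grid size, together with an affine coefficient map $\mu$; setting $(\gamma(t),\theta(t)) := \mu(\alpha(t),\beta)$, which is affine in $t$, and $h(t) := h(\gamma(t),\theta(t))$, one gets $\Norm{\bar h(\alpha(t),\beta) - h(t)}{L^1(\Domx)}\lesssim \abs{\tilde u_0}_{TV}\Norm{\rho-\rho_\eps}{L^1(\RR)}\lesssim \abs{u_0}_{TV}\,\eps$.

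It remains to bound $\Norm{u_0\circ X_t^{-1} - \tilde u_0\circ(\Id+t\tilde\varphi)^{-1}}{L^1(\Domx)}$, which I would split into (a) replacing $u_0$ by $\tilde u_0$ under $X_t^{-1}$, and (b) replacing $X_t$ by $\Id+t\tilde\varphi$. For (a), the substitution $y = X_t^{-1}(x)$ gives $\Norm{u_0\circ X_t^{-1} - \tilde u_0\circ X_t^{-1}}{L^1(\Domx)} = \int\abs{u_0-\tilde u_0}\,\abs{X_t'}\,dy \le \Norm{X_t'}{L^\infty}\Norm{u_0-\tilde u_0}{L^1(\Domx)}$, and since $\Norm{X_t'}{L^\infty} = \Norm{1 + tF''(u_0)u_0'}{L^\infty} \le 1 + T\Norm{F''}{L^\infty(u_0(\Domx))}\Norm{u_0'}{L^\infty(\Domx)}$, this is $\lesssim \tfrac1K\abs{u_0}_{TV}\big(1 + T\Norm{F''}{L^\infty(u_0(\Domx))}\Norm{u_0'}{L^\infty(\Domx)}\big)$, already of the claimed form. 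For (b), $\tilde u_0$ is piecewise constant with jumps in $\cX$, and by the interpolation choice $X_t(x_k) = x_k + t\tilde\varphi(x_k) = (\Id+t\tilde\varphi)(x_k)$, so $\tilde u_0\circ X_t^{-1}$ and $\tilde u_0\circ(\Id+t\tilde\varphi)^{-1}$ are the same piecewise-constant function, and (b) vanishes. (Equivalently, one may invoke \Cref{lem:compose_error} once with $f=u_0$, $g=X_t$, and $g_\eps$ the piecewise-linear interpolant of $X_t$ on $\cX$, which packages (a), (b) and the $\rho$–$\rho_\eps$ replacement in a single estimate.) Adding the contributions and choosing $\eps\lesssim 1/K$ yields the stated bound.

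The step I expect to be most delicate is controlling how the discretization of $u_0$ interacts with the transport: the factor $1+T\Norm{F''}{L^\infty(u_0(\Domx))}\Norm{u_0'}{L^\infty(\Domx)}$ is precisely the Jacobian bound $\Norm{X_t'}{L^\infty}$ entering the change of variables in (a), and keeping it finite and uniform over $t\le T$ forces one to stay in the classical, pre-shock regime where $X_t$ is bi-Lipschitz. Everything else — verifying admissibility of $\Phi_2$, tracking that the d.o.f., width and grid size are all comparable, checking that $\eps$ can be taken as small as the ``$\eps$ sufficiently small depending on the grid width'' hypothesis of \Cref{lem:compose_error} demands, and the bookkeeping of working on a compact subinterval containing $\supp u_0$ (off which $u$ vanishes and the transport map can be modified to have compactly supported derivative) — is routine once the transported-subspace reduction is in place.
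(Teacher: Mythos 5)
Your proposal is correct and follows essentially the same route as the paper: recognize the classical solution as a transported subspace spanned by $\{\Id, F'\circ u_0\}$ and $\{u_0\}$, invoke \Cref{lem:shallow_approx} for the rank-$3$, depth-$2$ LRNR with affine coefficient map, and control the discretization error via \Cref{lem:compose_error} with $g_\eps$ interpolating $X$ on the grid so that the $\Norm{g-g_\eps}{L^\infty(\cX)}$ term vanishes, the Jacobian factor $\Norm{X'}{L^\infty}\le 1+T\Norm{F''}{L^\infty(u_0(\Domx))}\Norm{u_0'}{L^\infty(\Domx)}$ producing exactly the stated constant. The only cosmetic difference is that you discretize $u_0$ to a piecewise-constant profile before forming the transported subspace (so that \Cref{lem:shallow_approx}'s hypotheses on $\Psi_r$ hold literally), whereas the paper applies \Cref{lem:compose_error} to the exact data first; the two orderings are equivalent.
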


\begin{proof} See Appendix~\ref{proof:thm:lrnr_classical}. \end{proof}

We remark here that the classical norm $\Norm{u_0'}{L^\infty(\Domx)}$ can
be avoided, as will become apparent in the results in the next section
(Thm.~\ref{thm:lrnr_entropy}). Its presence here is attributable to the
separate approximation of the initial condition $u_0$ and the
characteristic curves $X$, without taking into consideration the
simplifications arising in their composition. Still, this result uses a
straightforward approximation and demonstrates that, given sufficient
width, the intrinsic low dimension can be exploited by LRNR.

We briefly emphasize again that while the transported subspace requires
that its coefficient sets be admissible, the LRNR does not
require similar
constraints. The representation makes sense for any coefficients $\gamma$
and $\theta$, and this is a consequence of the inverse-bias trick.

\section{LRNR approximation of entropy solutions}
\label{sec:LRNR_approx_entropy}

Having constructed an efficient LRNR approximation of the classical solution,
we turn our attention to the LRNR approximation of the entropy solution.
As briefly discussed in the beginning of Sec.~\ref{sec:LRNR_classical}, there are
important differences between the classical solution and the entropy solution.
Let us explain how these differences introduce significant challenges in
constructing the LRNR approximation.

Recall from Sec.~\ref{sec:entropy_theory}
that we have introduced two new representations of the entropy solution,
namely, one using the rarefied characteristics $\hcharX$
\eqref{eq:u0Yp} and another using the relief characteristics $\chkcharX$
\eqref{eq:ucts}:
\begin{equation} \label{eq:class_rare_relief}
  \begin{aligned}
  &u_0 \circ X^{-1} (x, t) & &\text{ classical characteristics,}
  \\
  &\hat{u}_0 \circ \hcharX^{+} (x, t)
  & &\text{ rarefied characteristics,}
  \\
  &\hat{u}_0 \circ \chkcharX^{+} (x, t)
  & &\text{ relief characteristics.}
  \\
  &\quad =\hat{u}_0 \circ ( \hId + t (F' \circ \hat{u}_0) +
                     C_x \rho \circ (t - \lambda) )^+
  \end{aligned}
\end{equation}
The rarefied characteristics $\hcharX$ is a direct generalization of the
classical chracteristics that allows representing shocks and rarefaction waves.
It is monotone, thus the inverse-bias trick can still be applied
directly to handle the left-inverse $(\cdot)^+$. However, as we discuss below,
the rarefied characteristics are not in a compositional low rank form that 
is compatible with an efficient representation by LRNRs.
On the other hand, 
the relief chracteristics $\chkcharX$ do have a compositional low rank
structure (as discussed in Sec.~\ref{sec:lrnr}) that is compatible
with LRNRs. However, it is not monotone
and hence the left-inverse cannot be
removed using the inverse-bias trick directly and an additional layer of
approximation is required (see paragraphs following \eqref{eq:inv-bias}).

Therefore, our goal here is to devise a LRNR approximation that agrees with 
the rarefied
characteristics $\hcharX$, while simultaneously exploiting the compositional
low-rank structure of the relief characteristics $\chkcharX$. We build a
construction by decomposing the rarefied characteristics into sum of local-in-time
contributions, and introduce a LRNR structure per each summand via the same
technique used to for relief characteristics.
This allows us to take advantage of the compositional structure while
not necessarily maintaining monotonicity.

In this section we will address these challenges involved and construct an efficient LRNR approximation of the
entropy solution. We first provide a broad outline in
Sec.~\ref{sec:LRNR_entropy_overview}, 
then introduce various discretizations that
gradually reveal the neural-network-like structure in
Sec.~\ref{sec:timelayer} and Sec.~\ref{sec:timelayer2}, to finally work towards the main
Thm.~\ref{thm:lrnr_entropy} in Sec.~\ref{sec:LRNR_entropy}. Throughout, we
refer the reader to Appendix~\ref{sec:glossary} for a glossary of various
approximations introduced in this section. 

\subsection{Outline of approach}
\label{sec:LRNR_entropy_overview}

\begin{figure}
  \centering
  \includegraphics[width=0.9\textwidth]{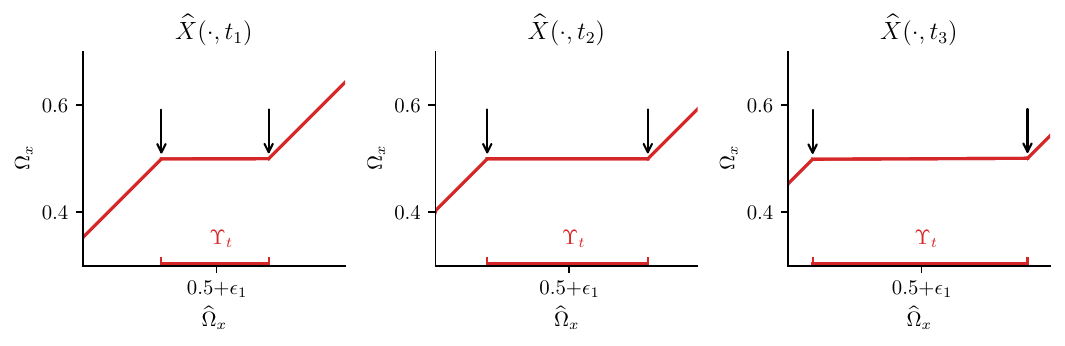}
  \caption{The time evolution of the rarefied characteristics
  $\hcharX(\cdot, t)$ for the stationary shock example in Fig.~\ref{fig:stationary},
  zoomed into the region relevant to shock propagation.
  The constant interval (marked by $\shockset$) expands over time $0 < t_1 < t_2
  < t_3$. The two kinks at the endpoints (marked by two arrows) of this interval
  travel in opposite directions over time.}
  \label{fig:stationary_movie}
\end{figure}

In this section, we will motivate and outline our general approximation
approach.  We begin by 
detailing the technical challenges mentioned in the introduction
 of this section.
In light of the inverse-bias trick in Sec.~\ref{sec:ibtrick}, it might appear
that the approximation of the entropy solution is now a simple matter of
approximating the rarefied characteristics $\hcharX$
\eqref{eq:rarefied_charcurves}. Unfortunately, the rarefied characteristics
manifold given by
\begin{equation}
  \cM_{\hcharX} := \{\hcharX(\cdot, t) : t \in \Domt\},
\end{equation}
itself has a slowly decaying Kolmogorov $n$-width in the presence of shocks.
For example, recall the stationary shock example from
Fig.~\ref{fig:stationary}. Observe how the rarefied characteristics
$\hcharX(\cdot, t)$ evolves over time, as illustrated in
Fig.~\ref{fig:stationary_movie}. The constant region of $\hcharX(\cdot, t)$
describing the shock expand with time, according to the Rankine-Hugoniot 
condition, and this leads to kinks that travel
across the domain $\hDomx$. As a result, the $n$-width in $L^2$-norm must be
$\gtrsim n^{-3/2}$
\cite{Welper2017,Ohlberger16,Greif19,rim2020depth}. 
This is an obstruction to the strategy of finding a compositional
reduced model directly, by using e.g. transported subspaces
(Def.~\ref{def:transported_subspace}). 
In other words, there is now a slow decay in the $n$-width of
\emph{both} the solution manifold \emph{and} the rarefied characteristics
manifold.\footnote{To provide a succinct
  analogy: The classical characteristics are to the solution
    manifold, what the shock time function is to the rarefied
    characteristics manifold.}

One natural question to ask is whether it is possible to 
construct compositional reduced models yet again, this
time for the manifold of rarefied characteristics $\cM_{\hcharX}$. A 
low-rank structure is apparent in the relief
characteristics $\chkcharX$ \eqref{eq:relief}, in the superpositions of a
few basis functions, as discussed in Sec.~\ref{sec:lrnr};
see the diagram in \eqref{eq:low_rank_upto_compositions}. Still, 
we need care in introducing the shock-related jump term $\rho(\lambda(x) -
  t)$ that appears in the inner-most composition \eqref{eq:relief}. In the
relief characteristics, this term is used in conjunction with the
left-inverse to replace the portions of the rarefied characteristic curves
with constant regions. LRNRs are defined using ReLU activations so they
are continuous piecewise linear functions; as such, the jump discontinuity
must be approximated carefully so that the low-rank compositional structure
is preserved.
We propose a LRNR approximation that satisfies these requirements, outlined as follows. 

\begin{itemize}[itemsep=2pt, topsep=4pt]
  \item We start with a continuous
piecewise linear approximation of the rarefied characteristics with
multivalued inverse \eqref{eq:hcharX0},
\begin{equation} \label{eq:hX0_rhoeps}
  \hcharX_0 (\cdot, t)
  \approx
  \sum_k c_k (t) \rho_\eps
  \left(
  \,\cdot\, - x_k
  \right).
\end{equation}
This approximation so far does not properly handle shocks.
\item As discussed above, the LRNR approximation of shock propagation must
be done
carefully to preserve the compositional low rankness of the overall 
approximation and avoid the slow decay in the Kolmogorov width.
We represent shocks (constant regions) by removing certain
variations in 
the characteristics, emulating the relief characteristics while preserving 
continuous piecewise linearity and monotonicity. 
Roughly speaking, we achieve this by introducing
a time-layered approximation so that
slices of $\hcharX_0(\cdot ,t)$ are translated outside 
of the domain. 

To maintain uniform error between $\hcharX$ and this approximation, it is
necessary that the slice that is being translated to the right 
does so quickly and is small in amplitude.
A key observation that makes this possible is that,
in the approximation \eqref{eq:hX0_rhoeps},
there is a freedom in constructing the coefficients $(c_k)$.
Without adding to the number of coefficients, one may restructure 
the coefficients so that the approximation is rewritten as a sum of
the slices $(\eta_k)$. With some simplifications, one writes
\begin{equation} 
  \hcharX_0 (\cdot, t)
  \approx
  \hId(\cdot) + t v(\cdot)
  +
  \sum_k  t\, \eta_k \left( \,\cdot\, \right),
  \qquad
  \Norm{t \eta_k }{L^\infty(\hDomx)} \lesssim \eps.
\end{equation}
Note that this approximates the multi-valued inverse and
does not properly represent shocks.
\item  We modify the above approximation to represent shocks,
by an appropriate removal of slice terms to mimick the expanding constant 
regions of $\hcharX$. That is, we write
\begin{equation} 
  \hcharX (\cdot, t)
  \approx
  \barwedge(\cdot, t)
  \approx
  \hId(\cdot)+ t v(\cdot)
  +
  \sum_{\mathclap{\substack{\text{partial sum}\\ \text{over }k}}} 
  t\, \eta_k
  \left( \,\cdot\, \right),
  \qquad
  \Norm{t \eta_k }{L^\infty(\hDomx)} \lesssim \eps,
\end{equation}
where $\barwedge$ denotes a piecewise linear approximation of
$\hcharX$ we will construct in the following section.
Now, we have an appropriate approximation of the rarefied characteristics, 
thereby representing shocks properly. However, the partial summation over $k$
is not a compositional operation, so it is not yet amenable
to efficient LRNR approximations.

\item Now each slice $\eta_k$ can be removed from the domain
using the inverse-bias trick devised for \eqref{eq:relief}: This amounts to 
inserting 
an approximation of the term $\rho(t - \lambda(x))$ into the bias,
\begin{equation} \label{eq:hcharX_approx}
  \hcharX (\cdot, t)
   \approx
  \doublebarwedge(\cdot, t)
  \approx
  \hId(\cdot)
  + t v(\cdot)
  +
  \sum_k   
  t
    \eta_k \left(
  \,\cdot\, - C_x \rho_\eps(t - \lambda(x_k)) \right),
  \quad
  x_k \in \supp \eta_k,
\end{equation}
with $C_x$  sufficiently large \eqref{eq:relief}. Finally, the
$\doublebarwedge$
is a continuous piecewise linear approximation that is efficiently
representable in the form of a LRNR that we will construct below.
This approximation is not monotone, but it is still well-defined where the approximation
is not monotone and is compatible with the 
inverse-bias trick, as an approximation to $\hcharX$
(Sec.~\ref{sec:ibtrick}  and Lem.~\ref{lem:compose_error}) . 
 Note that this
approximation has the compositional structure of relief
characteristics; the bias term
$C_x \rho_\eps(t - \lambda(x_k))$ plays the role of the term
$C_x \rho (t - \lambda(x))$ in $\chkcharX$ \eqref{eq:class_rare_relief}.
Whenever $|t - \lambda(x_k)| \lesssim \eps$, i.e. approximately the time
when the characteristic 
curve emanating from $x_k$ has entered into the shock, 
a portion of the characteristic
information $\eta_k$
is moved outside of the domain $\hDomx$ with speed $\sim 1/\eps$.
\end{itemize}

We present our construction as follows. In Sec.~\ref{sec:timelayer}, we derive
a time-layered approximation of rarefied characteristics $\hcharX$ that is
valid for short time intervals, then in Sec.~\ref{sec:timelayer2} we construct
a discretization of the form \eqref{eq:hcharX_approx}, then finally in
Sec.~\ref{sec:LRNR_entropy} we present the LRNR approximation result.

In the following sections, we will make use of a few special symbols to
denote functions, due to the intuitive relation between their shape and
the referenced function: $\vdash, \dashv, \barwedge$, and
$\doublebarwedge$ are
standard symbols used throughout the mathematics literature for various
purposes.\footnote{These symbols can be interpreted as Hangul
  letters, which makes them pronounceable as \emph{a}, \emph{eo},
  \emph{jieut}, and \emph{chieut}, respectively \cite{Taylor1980}.}

We close this outline with the remark that the use of a threshold function
in \eqref{eq:hcharX_approx} shares a similar theme with TSI
\cite{Welper2017}, transport reversal \cite{rim17reversal}, or FTR
\cite{Krah2023}.

\subsection{A time-layered approximation of the rarefied characteristics}
\label{sec:timelayer}

We first introduce an approximation of $\hcharX(x, t)$
\eqref{eq:rarefied_charcurves} that is accurate for a given time interval.
This approximation will be uniformly accurate, with the error controlled
by the number of degrees of freedom depending on the complexity of $u_0$
and the flux function $F$.

To simplify our discussion, let us define two functions we call \emph{left
  shock endpoint} and \emph{right shock endpoint} of a shock containing the
point $x$ at time $t$,
\begin{equation}
  \skleft,\, \skright : 
  \{ (x,t) \mid x \in \shockset, t \in \Domt \} \to \hDomx,
\end{equation}
which are given respectively by
\begin{equation} \label{eq:sklr}
  \skleft (x, t)
  := 
  \inf_z
  \left\{
  [z, x] \in \shockset
  \right\},
  \Aand
  \skright (x, t)
  := 
  \sup_z
  \left\{
  [x, z] \in \shockset
  \right\}.
\end{equation}
Note that these endpoints can represent arbitrary number of shocks;
the ranges $\skleft(\shocksett{t}, t)$ and $\skright(\shocksett{t}, t)$
yield left and right endpoints of \emph{all} shocks that have formed
up to time $t$.  
For the points $x \in \Upsilon_t$, that is, the points in $\hDomx$
whose 
subsequent characteristic curves go into the shock at time $t$, the
function values $\skleft (x,t)$ and $\skright (x,t)$ are the left and right most
end points of the maximal interval containing the point that are in the shock at
the time.

Since $\hcharX(\cdot, t)$ is continuous and it is constant in the interval
$[\skleft(x,t), \skright(x,t)]$,
\begin{equation}
  \hcharX(\skleft(x, t), t)
  =
  \hcharX(\skright(x, t), t)
  =
  \hcharX(z, t),
  \quad
  z \in [\skleft(x, t), \skright (x,t)].
\end{equation}
Furthermore, since we assumed our initial condition was in
$\overline{\cU}$ \eqref{eq:u0space}, and our problem is homogeneous
\eqref{eq:ivp},
\begin{equation}
  \# \left( \skleft (\hDomx, t) \right)
  =
  \# \left( \skright (\hDomx, t) \right)
  \le
  \# \left( \{ x \in \hDomx: u_0 (x_-) \ne u_0(x_+) \} \right),
\end{equation}
where the RHS is the number of jumps in the initial condition $u_0$,
and $\# (\cdot)$ denotes the cardinality of the set.  
For such
an initial condition, the number of jumps does not increase \cite{Ser99}.

For a fixed $x \in \hDomx$, the shock endpoints $\skleft(x, \cdot)$ and
$\skright(x, \cdot)$ are strictly decreasing and increasing with $t$,
respectively, that is,
\begin{equation}
  \text{ if }
  \quad
  0 \le t_1 < t_2 \le T
  \Tthen
  \skleft (x, t_2) < \skleft (x, t_1)
  \, \text{ and } \,
  \skright (x, t_1) < \skright (x, t_2),
\end{equation}
since the characteristic curves always enter into the shock, not vice versa.

Due to the differential equations that yield this evolution
\eqref{eq:rarefied_charcurves}, the endpoints $\skleft, \skright$ are
piecewise continuously differentiable with respect to $t$, except at
finitely many points in $\Domt$.

Let $k \in \NN$, $t_k \in \Domt$ with $t_k \le t_{k+1}$, then define the
indexed functions we call \emph{time-layered approximations} $\Xi_k \in
  C(\hDomx \times \Domt)$ of $\hcharX$,
\begin{equation} \label{eq:Xik}
  \Xi_k (x, t)
  :=
  \hId(x) + t \, v_k(x),
\end{equation}
where $v_k$ is defined as the continuous function
\begin{equation} \label{eq:vk}
  v_k(x)
  :=
  \left\{
  \begin{aligned}
    (F' \circ \hat{u}_0) (x)
     &  & 
    \text{ if } \lambda(x) > t_k,
    \\
    (F' \circ \hat{u}_0) (\skleft(x, \, t_k))
     &  & 
    \text{ if } \lambda(x) \le t_k.
  \end{aligned}
  \right.
\end{equation}
Note that $v_k(x)$ is constant in each connected component of the domain $\shocksett{t_k} = \{ x \in
  \hDomx : \lambda(x) \le t_k \}$, and that $v_k$ can be defined using $\skright$
in place of $\skleft$. The function $\Xi_k$ satisfies the following estimates.

\begin{lemma} \label{lem:Zk}
  For $t \in [t_k, t_{k+1}]$ we have the estimates,
  \begin{align}
    \Norm{\hcharX(\cdot, t) - \Xi_k (\cdot, t)}{L^\infty(\hDomx)}
     & \le
    \SemiNormlr{u_0}{TV(\Domx)}
    \Norm{F''}{L^\infty(u_0(\Domx))} \abs{t_{k+1} - t_k},
    \label{eq:hXZk_error}
    \\
    \SemiNormlr{\Xi_k (\cdot, t)}{TV(\hDomx)}
     & 
    \le
    1 
    + 
    T \SemiNormlr{u_0}{TV(\Domx)} \Norm{F''}{L^\infty(u_0(\Domx))}.
    \label{eq:ZkTV}
  \end{align}
\end{lemma}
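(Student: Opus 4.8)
The plan is to compare $\hcharX(\cdot,t)$ with $\Xi_k(\cdot,t)$ by freezing the dynamics at $t=t_k$ and integrating the resulting velocity discrepancy forward over the short window $[t_k,t]$. Writing the defining ODE \cref{eq:rarefied_charcurves} in integrated form, and using that $\Xi_k(x,\cdot)$ has constant slope $v_k(x)$ on $[t_k,t_{k+1}]$, one obtains for $t\in[t_k,t_{k+1}]$
\[
  \hcharX(x,t) - \Xi_k(x,t)
  =
  \bigl(\hcharX(x,t_k) - \Xi_k(x,t_k)\bigr)
  +
  \int_{t_k}^{t}\bigl(\hG(\hcharX(x,\tau),x,\tau) - v_k(x)\bigr)\,d\tau .
\]
Thus \cref{eq:hXZk_error} reduces to a matching identity $\hcharX(\cdot,t_k)=\Xi_k(\cdot,t_k)$ together with a uniform bound $\lvert \hG(\hcharX(x,\tau),x,\tau) - v_k(x)\rvert \le \Norm{F''}{L^\infty(u_0(\Domx))}\,\SemiNormlr{u_0}{TV(\Domx)}$, since the integration interval has length at most $\lvert t_{k+1}-t_k\rvert$.

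For the velocity bound, I would observe that every speed that can occur along a rarefied characteristic, as well as $v_k(x)$ itself, is of the form $F'(\zeta)$ with $\zeta$ a value attained by the solution (or by $\hat u_0$): this is immediate on the classical branch $F'(\hat u_0(\cdot))$, and on the shock branch the Rankine--Hugoniot quotient $\llbracket F(u)\rrbracket/\llbracket u\rrbracket$ equals $F'(\zeta)$ for some $\zeta$ between the two shock states by the mean value theorem. By the maximum principle for entropy solutions all these $\zeta$ lie in the interval spanned by $u_0$, whose length does not exceed $\SemiNormlr{u_0}{TV(\Domx)}$; hence any two such speeds differ by at most $\Norm{F''}{L^\infty(u_0(\Domx))}\,\SemiNormlr{u_0}{TV(\Domx)}$, which is the claimed bound.

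The matching identity is the crux and the step I expect to cost the most work. On $\{x : \lambda(x)>t_k\}$ it is immediate: the characteristic from $x$ is still classical up to time $t_k$, and there $v_k(x)=F'(\hat u_0(x))$, so both $\hcharX(x,t_k)$ and $\Xi_k(x,t_k)$ equal $\hId(x)+t_k F'(\hat u_0(x))$. On $\{x:\lambda(x)\le t_k\}$ one has to use that $\hcharX(\cdot,t_k)$ is constant across each maximal shock component of $\shocksett{t_k}$, and that, by continuity of $\hcharX$ and passage to the left-hand limit at the left shock endpoint, its value on that component is precisely the one produced by $v_k$ via $\skleft(\cdot,t_k)$; this is where the interplay of $\hId$, $\shocksett{t_k}$, $\skleft$, and the behaviour of $\hat u_0$ at a shock must be tracked carefully. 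Granting the identity, the displayed formula and the velocity bound give \cref{eq:hXZk_error} directly.

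For the total variation estimate \cref{eq:ZkTV}, I would use subadditivity, $\SemiNormlr{\Xi_k(\cdot,t)}{TV(\hDomx)} \le \SemiNormlr{\hId}{TV(\hDomx)} + t\,\SemiNormlr{v_k}{TV(\hDomx)}$. Since $\hId$ is non-decreasing and maps $\hDomx$ onto $(0,1)$, $\SemiNormlr{\hId}{TV(\hDomx)}=1$. Writing $v_k = (F'\circ\hat u_0)\circ\pi$, where $\pi:\hDomx\to\hDomx$ is the non-decreasing projection that is the identity off $\shocksett{t_k}$ and collapses each shock component to its left endpoint, composition with the monotone $\pi$ does not increase total variation, so $\SemiNormlr{v_k}{TV(\hDomx)} \le \SemiNormlr{F'\circ\hat u_0}{TV(\hDomx)} \le \Norm{F''}{L^\infty(u_0(\Domx))}\,\SemiNormlr{\hat u_0}{TV(\hDomx)} = \Norm{F''}{L^\infty(u_0(\Domx))}\,\SemiNormlr{u_0}{TV(\Domx)}$, using the chain rule for composing a BV function with the Lipschitz $F'$ and that the extension $u_0\mapsto\hat u_0$ preserves total variation. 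Bounding $t\le T$ completes the proof.
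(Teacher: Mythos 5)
Your proof follows essentially the same route as the paper's: both estimates rest on the matching $\hcharX(\cdot,t_k)=\Xi_k(\cdot,t_k)$ (which the paper likewise asserts ``by construction'') together with integration of the velocity discrepancy over an interval of length at most $\abs{t_{k+1}-t_k}$, with that discrepancy bounded by $\Norm{F''}{L^\infty(u_0(\Domx))}\SemiNormlr{u_0}{TV(\Domx)}$ because every speed involved is $F'$ evaluated at a value attained by the solution, and the total-variation bound reduces in both cases to that of $\hId + t\,F'\circ\hat u_0$. The only differences are cosmetic: the paper localizes the velocity oscillation to the characteristic intervals newly absorbed by the shock during $[t_k,t]$ and controls the variation of $\Xi_k$ on the shock set by comparison with $\hcharX_0$, whereas you use a global mean-value bound and factor $v_k$ through a monotone collapse map; both yield the same constants.
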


\begin{proof} See Appendix~\ref{proof:lem:Zk}. \end{proof}

\begin{figure}
  \includegraphics[width=0.7\textwidth]{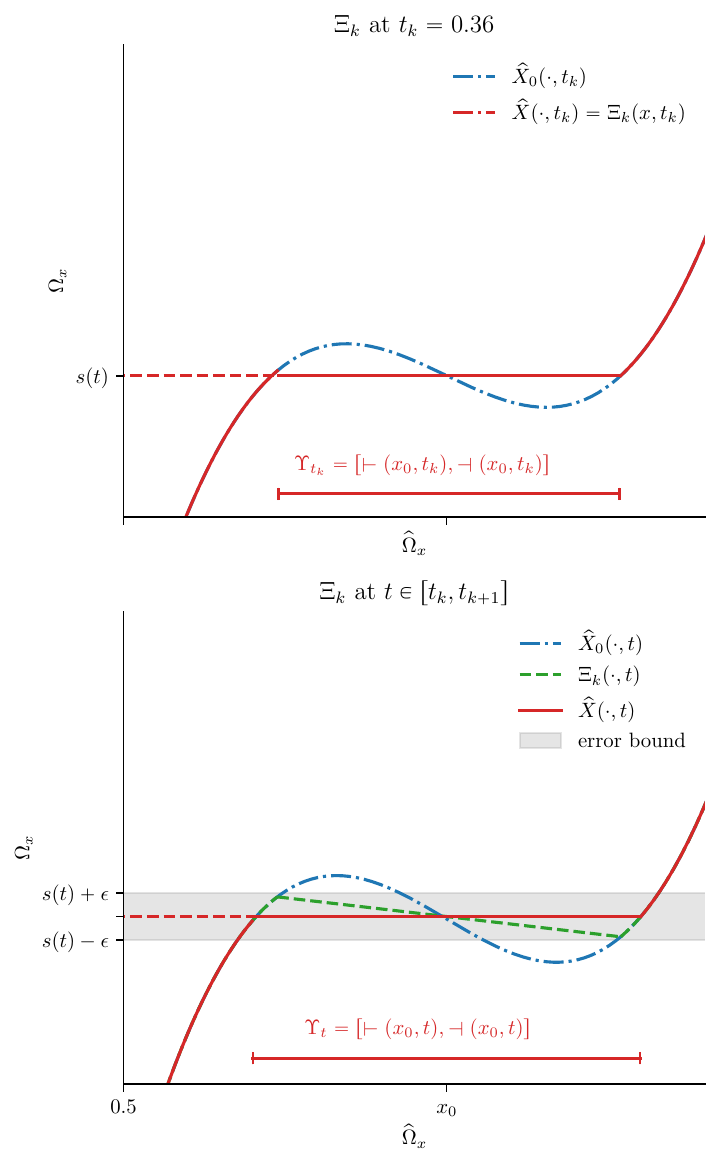}
  \caption{An illustration of time-layered approximation $\Xi_k$ \eqref{eq:Xik} 
  immediately after shock formation, compared with the rarefied characteristics with multivalued inverse $\hcharX_0$ and the rarefied characteristics $\hcharX$. At uniform grid times $t_k$ \eqref{eq:tunif} the $\Xi_k(\cdot, t_k) = \hcharX(\cdot, t_k)$, however, as the time is evolved forward $\Xi_k(\cdot, t)$ is different from $\hcharX(\cdot, t_k)$ in general, and the difference is within the $\epsilon$ error bound.
    }
  \label{fig:timelayer}
\end{figure}

\begin{figure}
  \centering
  \includegraphics[width=1.0\textwidth]{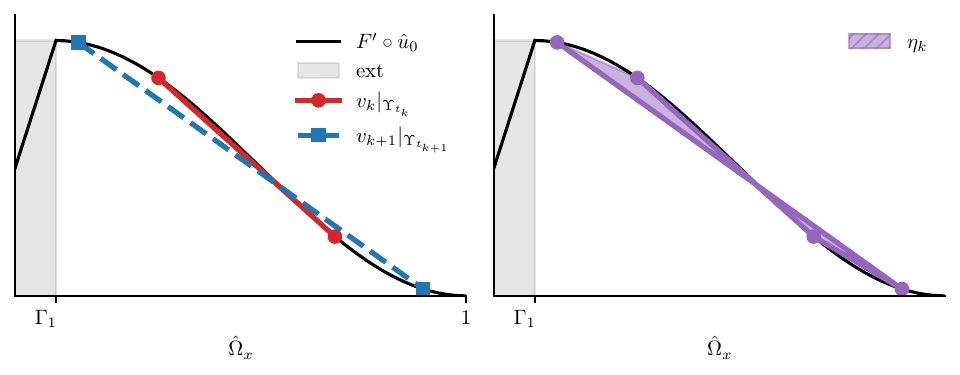}
  \caption{An illustration of the interpolant $\eta_k$ \eqref{eq:etak} along
      with the functions $v_k$ and $v_{k+1}$ \eqref{eq:vk}. The thin slice $\eta_k$
      is plotted to the right.}
  \label{fig:etak}
\end{figure}

Next, suppose we let $(t_k)_{k=1}^K$ be a uniform grid over time domain $\Domt$, 
\begin{equation} \label{eq:tunif}
  t_k := (k-1) \frac{T}{K},
\end{equation}
then we have our time-layered approximation $\Xi_k$ which satisfies
the uniform estimate
\begin{equation}\label{eq:hchar_Xi_err}
  \Norm{\hcharX(\cdot, t) - \Xi_k(\cdot, t)}{L^\infty(\hDomx)}
  \le
  \frac{T}{K}
  \SemiNormlr{u_0}{TV(\Domx)} \Norm{F''}{L^\infty(u_0(\Domx))}.
\end{equation}
See Fig.~\ref{fig:timelayer} for an illustration of this approximation.

\subsection{The spatial discretization of time-layered approximation}
\label{sec:timelayer2}

Now we introduce the appropriate spatial discretization of the
time-layered approximations $\Xi_k$ \eqref{eq:Xik} that we have constructed
in the previous section.

Let us introduce a notation for a constant that will appear repeatedly,
\begin{equation}
  \Const
  :=
  T \SemiNormlr{u_0}{TV(\Domx)} \Norm{F''}{L^\infty(u_0(\Domx))}.
\end{equation}
In the following, we will make use of 
the estimates \eqref{eq:hXZk_error} and \eqref{eq:ZkTV}
in Lem.~\ref{lem:Zk}, and the uniform time grid $t_k$
with grid size $K$ \eqref{eq:tunif}.

Because of the TV bound \eqref{eq:ZkTV}, there is an adaptive spatial grid
$(z_i)_{i=1}^K$ over $\hDomx$ on which we have  
continuous piecewise linear approximations
$\barwedge_k(\cdot, t) \in C(\hDomx \times \Domt) \cap P_1(\hDomx \times
  \Domt)$ of $\Xi_k(\cdot, t)$ satisfying
\begin{equation} \label{eq:Zk_error}
  \Norm{\Xi_k(\cdot, t) - \barwedge_k(\cdot, t)}{L^\infty(\hDomx)}
  \le
  \frac{1}{K}
  \left( 1 + \Const \right),\quad \text{ for } k= 1, ...\,, K-1.
\end{equation}
We refer the reader to \cite{Devore1998} for the details of the estimate and the
grid construction.  Note here that we have chosen the size of the adaptive grid
to equal the size $K$ of the time grid.  Furthermore, 
we choose $\barwedge_k$
so that it is an interpolant that satisfies 
\begin{equation}\label{eq:Zk_interp}
  \barwedge_k( z_i, t_k) = 
  \left\{
  \begin{aligned}
    \hcharX_0(z_i, t_k)
     & 
    \Ffor z_i \in \hDomx \setminus \shocksett{t_k},
    \\
    \hcharX ( z_i, t_k )
     & 
    \Ffor z_i \in \shocksett{t_k}.
  \end{aligned}
  \right.
\end{equation}

We have now obtained the interpolants $(\barwedge_k)$, and next we wish to
exploit the fact that these interpolants are closely related to each other
across the index $k$. They interpolate $(\Xi_k)$ each in the form
\eqref{eq:Xik} containing the functions $(v_k)$. Observe that these
functions approximate $F' \circ \hat{u}_0$ in $\hDomx \smallsetminus
  \shocksett{t_k}$. Since $(v_k)$ each approximate the same spatial
function, it is possible to construct an approximation of $(\Xi_k)$ for
all $k$ simply by reorganizing the point values of $(\barwedge_k)$ by
adding or removing the differences $(v_{k+1} - v_{k})$. We do this by
introducing a continuous piecewise linear interpolant of these
differences. 

For each index $k$, let $\eta_k \in C(\hDomx) \cap P_1(\hDomx) $ be an
approximation of $v_{k+1} - v_k$ on the grid $(z_i)$, specifically,
\begin{equation} \label{eq:etak}
  \eta_k (z_i)
  =
  \frac{1}{t_{k+1}}
  (\barwedge_{k+1} (z_i, t_k) - \hId (z_i) )
  -
  \frac{1}{t_k}
  (\barwedge_{k} (z_i, t_k) - \hId (z_i))
  \quad
  i=0, 1, ...\,, K-1.
\end{equation}
Then $\eta_k$ is nonzero only at the grid
points $(z_i)_{i=1}^K$ that lie in $\shocksett{t_{k+1}} \cup \shocksett{t_k}$.
See Fig.~\ref{fig:etak} for an illustration.

A quick calculation yields for $t \in [t_k, t_{k+1}]$,
\begin{equation} \label{eq:etakinfnorm}
  \begin{aligned}
     & \Norm{t\eta_k}{L^\infty(\hDomx)}
    \\                    & \le
                         \Norm{t\eta_k - t (v_{k+1} - v_k)}{L^\infty(\hDomx)}
    +
                         \Norm{t (v_{k+1} - v_k)}{L^\infty(\hDomx)}
    \\                    & \le
                         t\Norm{\eta_k - (v_{k+1} - v_k) }{L^\infty(\hDomx)}
    +
                         \Norm{ \Xi_k - \Xi_{k+1} }{L^\infty(\hDomx)}
    \le
                         \frac{4 \Const}{K},
  \end{aligned}
\end{equation}
due to \eqref{eq:hXZk_error} and \eqref{eq:Zk_error}.

Next, let us set the function $\barwedge: \hDomx \times \Domt \to \RR$
which is continuous piecewise linear in space but only piecewise linear in
time. Let $\barwedge_K(x, t):= \hId(x) + \frac{t}{T} (\barwedge_K(x, t_K) -
\hId(x))$ and

\begin{equation} \label{eq:jieut_partial_sum}
  \barwedge(x, t)
  :=
  \barwedge_K(x, t)
  +
  t \left[ \sum_{k: t < t_k} \eta_k (x) \right]
\end{equation}
for which we have the estimate,
\begin{equation} \label{eq:barwedge_error}
  \begin{aligned}
     & \Norm{\hcharX(\cdot, t) - \barwedge(\cdot, t) }{L^\infty(\hDomx)}
    \\
     & \le
    \Norm{\hcharX(\cdot, t) - \Xi_k(\cdot, t) }{L^\infty(\hDomx)}
    \\
    & \quad
    ~~~ +
    \Norm{ \Xi_k(\cdot, t) - \barwedge_k(\cdot, t_k) }{L^\infty(\hDomx)}
    +
    \Norm{ \barwedge_k(\cdot, t_k) - \barwedge(\cdot, t)}{L^\infty(\hDomx)}
    \\
     & 
    \le
    \frac{\Const}{K}
    +
    \frac{1 + \Const}{K}
    +
    \frac{4 \Const}{K}
    \\
     & 
    \le
    \frac{1 + 6\Const}{K}.
  \end{aligned}
\end{equation}
Finally, define the fully continuous piecewise linear 
$\doublebarwedge \in C(\hDomx \times \Domt) \cap P_1(\hDomx \times
  \Domt)$,
\begin{equation} \label{eq:chieut_approx}
  \doublebarwedge (x, t)
  := 
  \barwedge_K(x,t)
  +
  t
    \sum_{k=0}^{K-1}
    \eta_k\left(
    x - C_x \sigma \left( \frac{K}{T} (t - t_k) \right) 
    \right),
\end{equation}
for a constant $C_x \ge \abs{\Domx}$. Notice how the summation indices are over all $k$: Essentially, through 
the modification $x \mapsto x - C_x \sigma \left( \frac{K}{T} (t - t_k)
\right)$ we are emulating the partial sum over $k$ in
\eqref{eq:jieut_partial_sum} by quickly translating the $\eta_k$ terms outside
of the spatial domain $\hDomx$.

We briefly digress to point out that this approximation is the
implementation of \eqref{eq:hcharX_approx}. One can show that the uniform
time grid-point $t_k$ appearing in the inner-most argument is
approximately $\lambda(z)$ for some $z \in \supp \eta_k$ up to $\eps$
error. This fact, together with \eqref{eq:etakinfnorm}, implies that
$\doublebarwedge$ is indeed approximated by the expression
\begin{equation} \label{eq:relief_approx}
  \doublebarwedge (x, t)
  \approx 
  \barwedge_K (x,t)
  +
  t
    \sum_{k=0}^{K-1} \eta_k
    \left(x 
    - C_x \sigma \left(\frac{K}{T} (t - \lambda(z_{i_k}))\right) \right),
\end{equation}
in which $z_{i_k} \in (z_i)_{i=1}^K$ is any grid point picked to be in $\supp
  \eta_k$. Noting that $\eta_k$ are piecewise linear functions that can 
be approximated by a linear combination of functions $\rho_\eps(\cdot - x_k)$
appearing in \eqref{eq:hcharX_approx}, and that $K/T \sim 1/\eps$,
one sees that $\doublebarwedge$ indeed
approximates $\hcharX$ in the manner described above.
The approximation is thus an analogue of the relief characteristics
$\chkcharX$ described above \eqref{eq:relief}; it omits the part
of the characteristics that has entered the shock, simply by moving the 
data out of the domain, using a certain thresholding of the shock-time function
in conjunction with the inverse-bias trick.
See Fig.~\ref{fig:chieut} for a depiction of how $\doublebarwedge$ approximates
the constant region representing shocks.

We will next prove a uniform error estimate for $\doublebarwedge$.

\begin{lemma} \label{lem:chieut}
  The approximation 
  $\doublebarwedge \in C(\hDomx \times \Domt) \cap P_1(\hDomx \times \Domt)$
  defined above in \eqref{eq:chieut_approx} satisfies
  \begin{equation} \label{eq:hcharX_error}
    \Norm{\hcharX(\cdot, t) - \doublebarwedge(\cdot, t)}{L^\infty(\hDomx)}
    \lesssim
    \frac{1}{K}
    \left(
    1 + T \SemiNormlr{u_0}{TV(\Domx)} \Norm{F''}{L^\infty(u_0(\Domx))}
    \right).
  \end{equation}
\end{lemma}

\begin{proof} See Appendix~\ref{proof:lem:chieut}. \end{proof}

\begin{figure}
  \centering
  \includegraphics[width=1.0\textwidth]{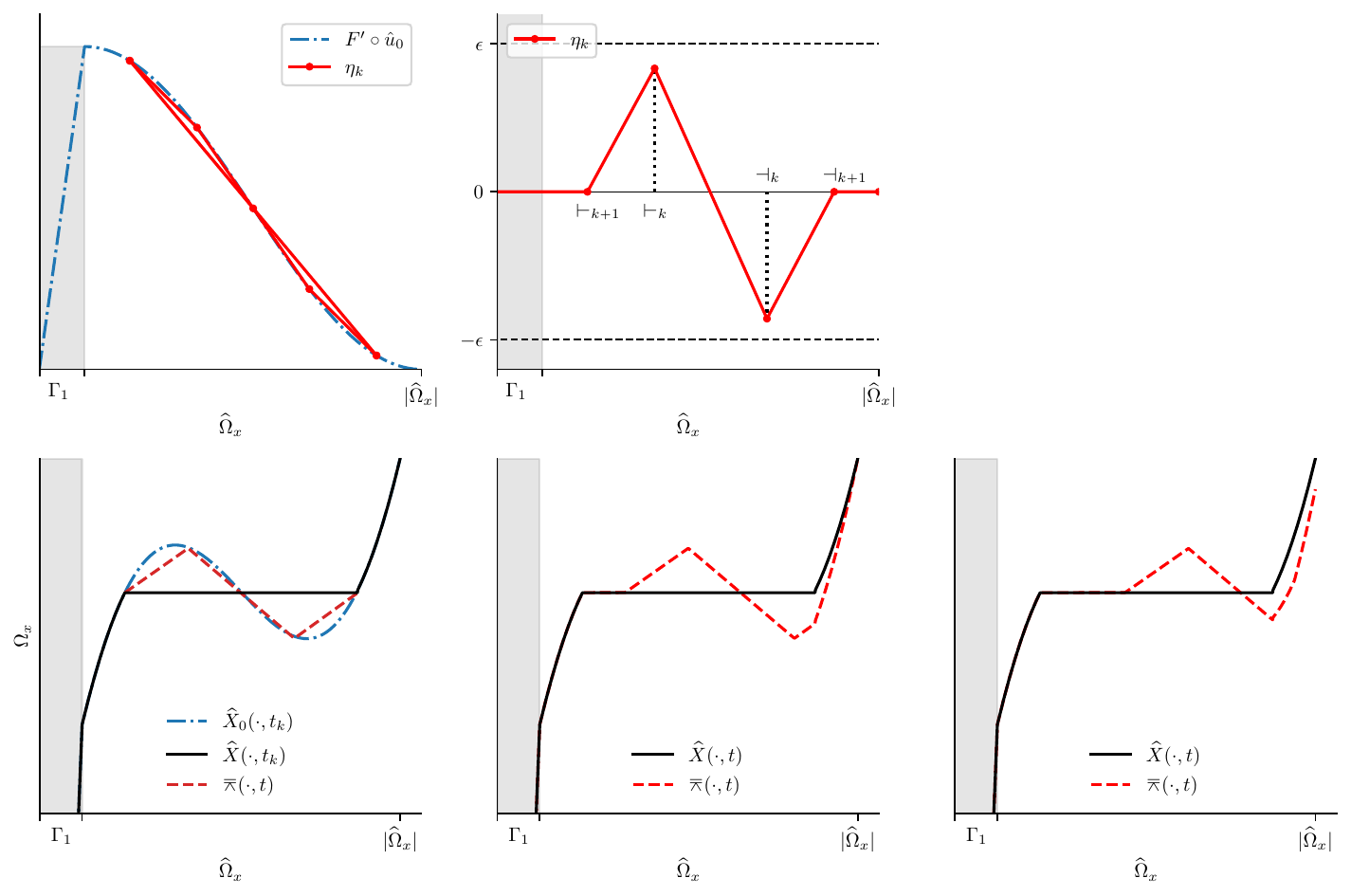}
  \caption{A diagram depicting the approximation by $\doublebarwedge(x, t)$.
    The term $F' \circ \hat{u}_0$ in the rarefied characteristics with
      multivalued inverse $\hcharX_0$ along with the interpolant $\eta_k$
      \eqref{eq:etak} are shown in the upper left plot. The interpolant $\eta_k$
      along with the left and right shock endpoints $\skleft, \skright$ at times
      $t_k$ and $t_{k+1}$ are shown in the upper center plot.
      The three lower plots show a movie illustrating the evolution of
      $\doublebarwedge(x, t)$ starting at time $t = t_k$. The thin interpolant $\eta_k$  representing a
      tiny perturbation of the characteristic curves, 
     is 
      moving out of the spatial domain to the right, 
      traveling quickly outside the domain.
  }
  \label{fig:chieut}
\end{figure}

Based on the continuous piecewise linear analogue $\doublebarwedge$ of the
rarefied characteristics, we now construct an approximation of the entropy
solution $u$. We will make use of known approximations of $u$ (see, e.g. \cite{Dafermos2010}): There is an entropy solution
  $\bar{u}$ with piecewise constant initial datum $\bar{u}_0 \in
    \overline{\cU}$ satisfying
  \begin{equation}
    \Norm{u(\cdot, t) - \bar{u}(\cdot, t)}{L^\infty(\Domx)}
    \lesssim
    \frac{1}{K} \SemiNormlr{u_0}{TV(\Domx)},
    \quad
    t \in \Domt.
  \end{equation}
  Let $\hat{u}_0$ be the extended initial condition of such $\bar{u}_0$
  constructed as in \eqref{eq:uhat}.
  Let $\hat{u}_{0\eps}$ be its approximation (see, e.g. \cite{Devore1998}) of the form
  \begin{equation} \label{eq:hatu0eps}
    \hat{u}_{0\eps}(x)
    =
    \sum_{i=1}^K \hat{u}_{0,i} \, \rho_\eps(x - x_i),
  \end{equation}
  that satisfies the error bound 
  \begin{equation}
    \Norm{\hat{u}_0 - \hat{u}_{0\eps}}{L^1(\hDomx)} \lesssim \frac{1}{K} \abs{u_0}_{TV(\Domx)}.
  \end{equation}
  Furthermore, 
  let $\doublebarwedge$ be the approximation constructed above \eqref{eq:chieut_approx}
  for the solution $\bar{u}$.

\begin{theorem} \label{thm:pwlin_entropy}
  Let $u$ be the entropy solution to the scalar conservation law
  \eqref{eq:ivp}. 
  Then for given $\hat{u}_{0\eps}$ and $\doublebarwedge$ as above
  and $\eps = 1/K$,
  \begin{equation} \label{eq:pwlin_entropy}
    \bar{h}(x, t)
    :=
    \hat{u}_{0\eps} \circ^+ \doublebarwedge (x, t)
    =
    \sum_{i=1}^K c_i \rho_\eps (x - \doublebarwedge(x_i, t))
  \end{equation}
  approximates the entropy solution $u$ with the error estimate
  \begin{equation} \label{eq:entropy_approx_error}
    \begin{aligned}
       & \Norm{u(\cdot, t) - \bar{h}(\cdot,t))}{L^1(\Dom_x)}
      \\
       & \quad \lesssim
      \frac{1}{K} \SemiNormlr{u_0}{TV(\Domx)}
      \left( 1 + \SemiNormlr{u_0}{TV(\Domx)} \right)
      \left( 1 + T \Norm{F''}{L^\infty(u_0(\Domx))} \right).
    \end{aligned}
  \end{equation}
\end{theorem}

\begin{proof} See Appendix~\ref{proof:thm:pwlin_entropy}. \end{proof}

The major issues identified in the introduction of this section
are the monotonicity and the low rank compositional structure. The final
approximation $\doublebarwedge$ of the rarefied characteristics is 
not necessarily monotone but sufficiently accurate, 
allowing the approximate version of the inverse-bias trick via
Lem.~\ref{lem:compose_error}.
Observe how in \eqref{eq:chieut_approx} one sees that it has a low rank
compositional structure reminiscent of the relief characteristics. This
will form the foundation for constructing the LRNR approximation in
Thm.~\ref{thm:lrnr_entropy} below.

The approximation error in \eqref{eq:entropy_approx_error} scales as $\sim
  K^{-1}$ and the constants depend only on the initial condition and the
flux function. is important to note that this rate implies that our
construction is not a na\"ive space-time approximation, for which the
error would scale with the d.o.f $\sim K^{-\half}$. 

Let us discuss the number of operations needed to evaluate the
approximation $\bar{h}$ \eqref{eq:pwlin_entropy} at points in the
space-time domain. At initial time, to evaluate $\doublebarwedge(x_i, t) $
for $i = 1, ...\, , K,$ one needs the evaluation of $\hId(x_i), v_K(x_i),
$ which requires $\cO(K)$ operations, so one evaluation of $h(x, 0)$
requires $\cO(K)$ work. Then, to evaluate $h(\bar{x}_p, 0)$ for a grid
$(\bar{x}_p)_{p=1}^K \subset \Domx$, $\cO(K^2)$ work is needed. Then, to
evaluate $h(\bar{x}_p, t_k)$ on the uniform time-grid $t_k$, one marches
forward in time: At each time $t_{k}$ the evaluation $t_{k+1}$ can be
obtained by sweeping through the spatial grid once, and each update costs
$\cO(1)$ for each $\bar{x}_i$, resulting in $\cO(K)$ work for each
time-update, by only adjusting one term in the sum appearing in the
definition of $\doublebarwedge$ in \eqref{eq:chieut_approx}. Thus, one can
evaluate the piecewise linear approximation $\bar{h}$
\eqref{eq:pwlin_entropy} at a space-time grid of size $\sim K^2$ using
$\cO(K^2)$ workload. This amounts to $\cO(1)$ work for per space-time grid
point.

Moreover, in this space-time approximation, the temporal approximation is
trivial, due to the explicit linear time dependence as clearly seen in
\eqref{eq:chieut_approx}.

\subsection{The LRNR approximation of compositional form}
\label{sec:LRNR_entropy}

We now prove that a LRNR construction can implement the approximation from
the previous section.

\begin{theorem}  \label{thm:lrnr_entropy}
  There is a LRNR $H_r$ with rank $r=2$, d.o.f $K$, width $\sim K^2$, and 
  depth $L=5$, such
  that for the solution to \eqref{eq:ivp} with initial condition $u_0 \in \cU$ at
  each time $u(t) := u(\cdot, t)$ there is a $h (t) := h(\gamma(t), \theta(t)) \in H_r$
  with the approximation error
  \begin{equation}
    \Norm{ u(t) - h (t)}{L^1(\Domx)}
    \lesssim
    \frac{1}{K} \SemiNormlr{u_0}{TV} \left(1 + \SemiNormlr{u_0}{TV} \right)
    \left( 1 + T \Norm{F''}{L^\infty(u_0(\Domx))} \right).
  \end{equation}
  where the coefficients $\gamma(t), \theta(t)$ are linear or constant functions of $t$.
\end{theorem}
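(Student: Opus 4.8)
The plan is to realize the explicit piecewise-linear approximant $\bar h(x,t)=\hat u_{0\eps}\circ^+\doublebarwedge(x,t)=\sum_{i=1}^{K}c_i\,\rho_\eps\bigl(x-\doublebarwedge(x_i,t)\bigr)$ of \Cref{thm:pwlin_entropy} \emph{exactly} as a member of a rank-$2$, depth-$5$ LRNR (\Cref{def:LRNR}) whose coefficients are affine in $t$. Once this is done, the error bound is inherited verbatim from \cref{eq:entropy_approx_error} (which already contains the density reduction from $u_0\in\cU$ to a piecewise-constant $\bar u_0\in\overline{\cU}$), so the entire burden is the architectural bookkeeping; this is the depth-$5$ analogue of the depth-$2$ construction in \Cref{lem:shallow_approx}. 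Two structural observations make it go through: (a) for each fixed grid point $x_i$, the number $\doublebarwedge(x_i,t)$ is a pure function of the parameter $t$, so the scalar network input $x$ is consumed only in the final layer through the inverse-bias trick; and (b) the only genuine nonlinearity in $t$ is the ramp $C_x\sigma\bigl(\tfrac KT(t-t_k)\bigr)$ appearing in \cref{eq:chieut_approx}, which a ReLU can produce from a bias that is affine in $t$ --- precisely the freedom the LRNR affords at each layer.

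I would build the five affine maps $A^1,\dots,A^5$ as follows. $A^1$ uses zero weights on $x$ and a bias with entries $C_x\tfrac KT(t-t_k)$, so the first ReLU layer emits the ramps $\tilde\sigma_k:=C_x\sigma\bigl(\tfrac KT(t-t_k)\bigr)$, $k=0,\dots,K-1$, together with a carried copy of $x$ (using $\sigma(x)=x$ on $\hDomx$). Expanding each piecewise-linear $\eta_k$ of \cref{eq:etak} as $\eta_k(y)=\sum_m s_{km}\sigma(y-\zeta_{km})$, the map $A^2$ forms the arguments $x_i-\tilde\sigma_k-\zeta_{km}$ (the weight reads off $\tilde\sigma_k$, the bias holds $x_i-\zeta_{km}$) and the second ReLU layer outputs all $\sigma(x_i-\tilde\sigma_k-\zeta_{km})$ plus the carried $x$. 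Then $A^3$ assembles, for each $i$, the number $\doublebarwedge(x_i,t)+C=\bigl(\hId(x_i)+C\bigr)+t\,v_K(x_i)+t\sum_{k,m}s_{km}\,\sigma(x_i-\tilde\sigma_k-\zeta_{km})$, a combination whose weight entries are $0$ or $t\,s_{km}$ and whose bias is affine in $t$; the data-dependent constant $C$ is chosen large enough --- finite, since $t\,v_K(x_i)$ is bounded and $\sum_k\lvert t\,\eta_k(\cdot)\rvert\lesssim\eps$ by \cref{eq:etakinfnorm} --- that the ReLU acts as the identity on these channels, and $x$ is carried through once more. Next $A^4$ forms $x-\doublebarwedge(x_i,t)\pm\tfrac\eps2$ and the fourth ReLU layer outputs the two shoulders of each $\rho_\eps\bigl(x-\doublebarwedge(x_i,t)\bigr)$, and finally $A^5$ is the single output row $\sum_i\tfrac{c_i}{\eps}\bigl[\sigma(x-\doublebarwedge(x_i,t)+\tfrac\eps2)-\sigma(x-\doublebarwedge(x_i,t)-\tfrac\eps2)\bigr]=\bar h(x,t)$.

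The remaining checks are then routine accounting. At each layer the weight and bias split as (constant)$\,+\,t\cdot$(linear), hence lie in a $2$-dimensional space of fixed rank-one factors with coefficients $\gamma^\ell(t),\theta^\ell(t)$ affine in $t$, giving $r=2$ and the affinity claim; there are five affine maps, so $L=5$; the only wide layer is the second, of width $\sum_i\sum_k N_k=K\sum_k N_k\lesssim K^2$, because each $\eta_k$ has only $\cO(1)$ knots (the increment $v_{k+1}-v_k$ is constant on the finitely many shock intervals, plus a part whose count telescopes over $k$ via $\shocksett{t_{k+1}}\setminus\shocksett{t_k}$), so $M\sim K^2$; and the parameters that are not $0$ or $1$ --- the $t_k$, $x_i$, $\hId(x_i)$, $v_K(x_i)$, $s_{km}$, $\zeta_{km}$, $c_i$, $C_x$, $\eps$, $C$ --- number $\cO(K)$, giving $K$ d.o.f.\ consistent with $K\lesssim rLM$. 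Since by construction $h(\gamma(t),\theta(t))\equiv\bar h(\cdot,t)$, \cref{eq:entropy_approx_error} yields $\Norm{u(t)-h(t)}{L^1(\Domx)}\lesssim \tfrac1K\SemiNormlr{u_0}{TV}\bigl(1+\SemiNormlr{u_0}{TV}\bigr)\bigl(1+T\Norm{F''}{L^\infty(u_0(\Domx))}\bigr)$.

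The hard part is not the estimate, which is free, but verifying the bookkeeping \emph{on the nose}: carrying $x$ untouched through four ReLU layers, tuning the positivity constant $C$ so that the ReLU collapses to the identity on the $\doublebarwedge(x_i,t)$-channels without breaking affinity in $t$, and --- most delicately --- confirming that the ramp $C_x\sigma\bigl(\tfrac KT(t-t_k)\bigr)$ sitting \emph{inside} the argument of $\eta_k$, a genuine two-layer composition of piecewise-linear maps, reproduces $\doublebarwedge$ of \cref{eq:chieut_approx} exactly rather than approximately. A secondary point is the width count $M\sim K^2$: one must check $\sum_k N_k\lesssim K$, i.e.\ that the total complexity of the increments $v_{k+1}-v_k$ across all $K$ time layers is linear in $K$, which follows from the finiteness of the shock structure for $\bar u_0\in\overline{\cU}$ together with the telescoping of the newly-shocked regions.
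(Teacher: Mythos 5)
Your proposal is correct and follows essentially the same route as the paper's proof: both realize the piecewise-linear approximant $\bar h$ of \Cref{thm:pwlin_entropy} \emph{exactly} as a rank-$2$, depth-$5$ LRNR by the same layer assignment (time ramps $\sigma(\tfrac{K}{T}(t-t_k))$ in the first layer, the shifted arguments of the ReLU expansion of the $\eta_k$ in the second, assembly of $\doublebarwedge(x_i,t)$ in the third, the two shoulders of $\rho_\eps$ in the fourth, and the weighted output sum in the fifth), and then inherit the error bound \cref{eq:entropy_approx_error} verbatim. Your extra care about carrying $x$ through the ReLUs and enforcing positivity of the $\doublebarwedge(x_i,t)$-channels addresses bookkeeping the paper leaves implicit, and your width accounting ($\sim K^2$ from the second layer) matches the paper's.
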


\begin{proof} See Appendix~\ref{proof:thm:lrnr_entropy}. \end{proof}

Note that neither the rank $r$ nor the number of layers $L$ depends on the
initial condition. This result shows there is an upper bound on a kind of
``nonlinear width'' as described by the number of coefficients $r$ and the
number of layers $L$, namely the tuple $(r, L) = (2, 5)$ associated with
the entropy solutions of the scalar conservation law \eqref{eq:ivp}
regardless of the complexity of the initial data or the shock topology
structure.

\section*{Acknowledgements}
The authors thank Randall J. LeVeque for helpful
discussions, and also thank Ari Stern for suggesting the term
\emph{shock time function}. Part of this work was done while D.R. was enjoying
the hospitality of the International Research Institute of Disaster Science
(IRIDeS) at Tohoku University through an invitation from Kenjiro Terada,
supported by Invitational Fellowship Program for Collaborative Research with
International Researcher (FY2023).

\appendix

\section{Glossary of definitions and relations} \label{sec:glossary}

\emph{Formulations of the entropy solution (Sec. 2).}
\begin{enumerate}[label=\thesection-\arabic*., topsep=8pt, leftmargin=30pt]
  \item $u$ entropy solution  (Sec.~\ref{sec:entropy_solution})
  \item $u_0$ initial condition to the entropy solution  (Sec.~\ref{sec:entropy_solution})
  \item $\xi$ mapping for similarity solution \eqref{eq:xi} 
  \item $X$ characteristic curves \eqref{eq:charcurves} 
  \begin{equation}
    u(x, t)
    =
    \left\{
    \begin{aligned}
      u_0 ( \charX^+ (x, t)) 
      \quad & (x, t) \in \Dom \smallsetminus \cR
      \\
      (F')^{-1} \left( \xi(x,t) \right)
      \quad & (x, t) \in \cR
    \end{aligned}
    \right.
    \tag{\ref{eq:entropy}}
  \end{equation}
  \item $\hat{u}_0$ an extension of the initial condition $u_0$ \eqref{eq:uhat}
  \item $\hcharX$  rarefied characteristic curves  \eqref{eq:rarefied_charcurves} 
  \begin{equation}
    u(x, t) = \hat{u}_0 \circ \hcharX^+(x,t) \quad (x,t) \in \Domx
    \tag{\ref{eq:u0Yp}}
  \end{equation}
  \item $\hcharX_0$ rarefied characteristics with multivalued inverse
  \begin{equation}
  \hcharX_0(x, t) := \hat{I}(x) + t (F'\circ \hat{u}_0) (x)
  \quad
    (\text{where }\hat{I} \text{ is given in \eqref{eq:hId}})
  \tag{\ref{eq:hcharX0}}
  \end{equation}
  \item $\rho = \mathbf{1}_{\RR_+}$ threshold function 
  \item $\lambda$ shock time function  (Sec.~\ref{sec:shock_time})
  \item $\chkcharX$ relief characteristic curves \eqref{eq:relief} 
  \begin{equation}
    u(x, t) = \hat{u}_0 \circ \chkcharX^+ (x, t) \quad (x,t) \in \Dom \tag{\ref{eq:ucts}}
  \end{equation}
\end{enumerate}

\noindent \emph{Low rank neural representation (LRNR) (Sec. 3).}

\begin{enumerate}[label=\thesection-\arabic*., topsep=8pt, leftmargin=30pt]
  \setcounter{enumi}{10}
  \item $h(x; W, B)$ feedforward neural network (Def.~\ref{def:nn})
  \item $\sigma$ ReLU activation (Def.~\ref{def:nn})
  \item $L$ depth of a neural network (Def.~\ref{def:nn})
  \item $M$ width of a neural network (Def.~\ref{def:nn})
  \item $H_r$ low rank neural representation  of rank $r$ (Def~\ref{def:LRNR})
  \item $K$ degree of freedom of a LRNR (Def.~\ref{def:LRNR})
  \item $\WW_r$, $\BB_r$ low dimensional weights and biases
  \begin{equation}
    \WW_r
    =
    \bigtimes_{\ell = 1}^L (C^\ell \cdot \UU^\ell_r) \subset \WW
    \qquad
    \BB_r
    =
    \bigtimes_{\ell = 1}^L (D^\ell \cdot \VV^\ell_r) \subset \BB
    \tag{\ref{eq:wrbr}}
  \end{equation}
  \item LRNR (Def.~\ref{def:LRNR})
  \begin{equation}
    H_r 
    = 
    \left\{
    h (W, B) \mid W \in \WW_r,\ B \in \BB_r 
    \right\}
    \tag{\ref{eq:lrnr}}
  \end{equation}
\end{enumerate}

\noindent \emph{Transported subspaces, inverse-bias trick, LRNR approximation of classical solutions (Sec. 4).}

\begin{enumerate}[label=\thesection-\arabic*., topsep=8pt, leftmargin=30pt]
  \setcounter{enumi}{18}
  \item $\bar{H}_r := (B \cdot \Psi_r) \circ (A \cdot \Phi_r)^{-1}$
  transported subspace (Def.~\ref{def:transported_subspace})
  \item $\rho_\eps$ continuous piecewise linear approximation to the
  threshold function \eqref{eq:rho-eps}
  \item Inverse-bias trick for $(f, g) \in B \cdot \Psi_r \times A \cdot \Phi_r$
  \begin{equation}
    f \circ g^{-1} (x)
    = 
    c_0 + \sum_{k} c_k \rho( g^{-1}(x) - x_k)
    = 
    c_0 + \sum_{k} c_k \rho( x - g(x_k))
    \tag{\ref{eq:inv-bias}}
  \end{equation}
  \item $f_\eps \circ ^+ g_\eps$ continuous (finitely) piecewise linear
  analogue of the inverse bias trick
  \begin{equation}
    f_\eps \circ^+ g_\eps (x)
    =
    \sum_{i=1}^N c_i \rho_\eps\left(x - g_\eps(x_i)\right)
    \tag{\ref{eq:cts_ibtrick}}
  \end{equation}
  \item LRNR ($L=2, r=3$) approximation $h(t) \in H_r$ to the classical solution (Thm.~\ref{thm:lrnr_classical})
  \begin{equation}
    \Norm{ u(t) - h(t) }{L^1(\Domx)}
    \lesssim
    \frac{1}{K}
    \abs{u_0}_{TV(\Domx)}
    \left(
    1 + T \Norm{F''}{L^\infty(u_0(\Domx))} \Norm{u_0'}{L^\infty(\Domx)}
    \right)
    \tag{\ref{eq:LRNR_classical}}
  \end{equation}
\end{enumerate}

  \noindent \emph{LRNR approximation the entropy solution (Sec. 5).}
\begin{enumerate}[label=\thesection-\arabic*., topsep=8pt, leftmargin=30pt]
  \setcounter{enumi}{23}
  \item $\skleft,\, \skright$ left and right shock endpoints \eqref{eq:sklr}
  \item $t_k$ uniform time-grid $t_k = (k-1) T / K$
  \item $\Xi_k$ time-layered approximation $\Xi_k (x, t)
    = \hId(x) + t \, v_k(x)$ \eqref{eq:Xik} for the time window $[t_k, t_{k+1}]$
  \begin{equation}
    \Norm{\hcharX(\cdot, t) - \Xi_k(\cdot, t)}{L^\infty(\hDomx)}
    \le
    \frac{T}{K}
    \SemiNormlr{u_0}{TV(\Domx)} \Norm{F''}{L^\infty(u_0(\Domx))}
    \tag{\ref{eq:hchar_Xi_err}}
  \end{equation}
  \item $\barwedge_k$ spatial discretization of the time-layered
  approximation $\Xi_k$ for the time window $[t_k, t_{k+1}]$ satisfying
  \begin{align*}
    &\Norm{\Xi_k(\cdot, t) - \barwedge_k(\cdot, t)}{L^\infty(\hDomx)}
    \le
    \frac{1}{K}
    \left( 1 + \Const \right)
    \tag{\ref{eq:Zk_error}}
    \\
    &\barwedge_k( z_i) = 
    \left\{
    \begin{aligned}
      \hcharX_0(z_i, t_k)
        & 
      \Ffor z_i \in \hDomx \setminus \shockset
      \\
      \hcharX ( z_i, t_k )
        & 
      \Ffor z_i \in \shockset
    \end{aligned}
    \right.
  \tag{\ref{eq:Zk_interp}}
  \end{align*}
  \item $\eta_k$ approximation of $v_{k+1} - v_{k}$ appearing inside
  $\Xi_k$ \eqref{eq:etak} satisfying
  \begin{equation}
  \Norm{t\eta_k}{L^\infty(\hDomx)} \le \frac{4 \Const}{K}
  \tag{\ref{eq:etakinfnorm}}
  \end{equation}
  \item $\doublebarwedge$ continuous piecewise linear
  approximation of the rarefied characteristics $\hcharX$
  \begin{equation}
    \Norm{\hcharX(\cdot, t) - \doublebarwedge(\cdot, t)}{L^\infty(\hDomx)}
    \lesssim
    \frac{1}{K}
    \left(
    1 + T \SemiNormlr{u_0}{TV(\Domx)} \Norm{F''}{L^\infty(u_0(\Domx))}
    \right)
    \tag{\ref{eq:hcharX_error}}
  \end{equation}
  \item $\hat{u}_{0\eps}$ continuous piecewise approximation of the extended
  initial condition $\hat{u_0}$ \eqref{eq:hatu0eps}
  \item $\bar{h}$ continuous piecewise linear approximation
    $\bar{h}(x, t)
    :=
    \hat{u}_{0\eps} \circ^+ \doublebarwedge (x, t)$ \eqref{eq:pwlin_entropy}
    satisfying
    \begin{equation}
      \begin{aligned}
      & \Norm{u(\cdot, t) - \bar{h}(\cdot,t))}{L^1(\Dom_x)}
      \\
      & \quad \lesssim
      \frac{1}{K} \SemiNormlr{u_0}{TV(\Domx)}
      \left( 1 + \SemiNormlr{u_0}{TV(\Domx)} \right)
      \left( 1 + T \Norm{F''}{L^\infty(u_0(\Domx))} \right)
      \end{aligned}
      \tag{\ref{eq:entropy_approx_error}} 
    \end{equation}
    \item LRNR ($L=5, r=2$) approximation $h(t) \in H_r$ to the entropy solution (Thm.~\ref{thm:lrnr_entropy})
    \begin{equation}
      \Norm{ u(t) - h (t)}{L^1(\Domx)}
      \lesssim
      \frac{1}{K} \SemiNormlr{u_0}{TV} \left(1 + \SemiNormlr{u_0}{TV} \right)
      \left( 1 + T \Norm{F''}{L^\infty(u_0(\Domx))} \right)
      \tag{\ref{eq:LRNR_classical}}
    \end{equation}
  
\end{enumerate}

\section{Proofs of theorems and lemmas} \label{sec:proofs}

  \subsection{Proof of Lemma~\ref{lem:rarefied}} \label{proof:lem:rarefied}
    The characteristics $\hcharX$ restricted to $\Domx$ by the expression
    $\hcharX(\iota(x), t)$ satisfy the same differential equation as $X$ so they
    necessarily agree, 
    \begin{equation} \label{eq:rarefied_entropy}
      \hcharX^+(x, t) = \iota \circ X^+(x, t)
      \quad
      \text{ for }
      (x, t) \in \Dom \smallsetminus \cR.
    \end{equation}
    Otherwise if $(x, t) \in \cR$, the expression yields the similarity
    solution: suppose $y \in \hDomx$ satisfies $x = \hat{I}(y) + t F'
      \circ \hat{u}_0(y)$
    then one can show that 
    $(x - \bar{\xi}(x,t))/ t = F' \circ \hat{u}_0(y)$,
    implying $\hat{u}_0 (y) = (F')^{-1} [(x- \bar{\xi}(x,t))/t]$. 
    This agrees with 
    the entropy solution \eqref{eq:entropy}.

  \subsection{Proof of Lemma~\ref{lem:compose_error}} \label{proof:lem:compose_error}
    Let us define,
    \begin{equation}
      f_0(x) := \sum_{i=1}^N c_i \rho(x - x_i),
      \qquad
      f_0 \circ^+ g_\eps(x) := \sum_{i=1}^N c_i \rho(x - g_\eps(x_i)).
    \end{equation}
    By the triangle inequality, we have
    \begin{equation}
      \begin{aligned}
        \Norm{f \circ g^+ - f_\eps \circ^+ g_\eps}{L^1(\Domx)}
          & \le
        \Norm{f \circ g^+ - f_0 \circ g^+}{L^1(\Domx)}
        \\
          & +
        \Norm{f_0 \circ g^+ - f_0 \circ^+ g_\eps}{L^1(\Domx)}
        \\
          & +
        \Norm{f_0 \circ^+ g_\eps - f_\eps \circ^+ g_\eps}{L^1(\Domx)}.
      \end{aligned}
    \end{equation}
    For the first term,
    \begin{equation}
      \begin{aligned}
        \Norm{f \circ g^+ - f_0 \circ g^+}{L^1(\Domx)}
          & =
        \int_{\Domx} \abs{f ( g^+(x)) - f_0 ( g^+(x))} \dx
        \\
          & =
        \int_{\hDomx} \abs{f ( y) - f_0 (y)} \abs{g'(y)} \dy
        \\
          & =
        \Norm{g'}{L^\infty(\hDomx)} \Norm{f - f_0}{L^1(\hDomx)},
      \end{aligned}
    \end{equation}
    then for the second term,
    \begin{equation}
      \begin{aligned}
          & \Norm{f_0 \circ g^+ - f_0 \circ^+ g_\eps}{L^1(\Domx)}
        \\
          & =
        \int_{\Domx}
        \abs{ \sum_{i=1}^N c_i \rho(x - g(x_i))
          -
          \sum_{i=1}^N c_i \rho(x - g_\eps(x_i))} \dx
        \\
          & \le
        \sum_{i=1}^N \abs{c_i}
        \int_{\Domx} \abs{\rho(x - g(x_i)) - \rho (x - g_\eps(x_i))} \dx
        \\
          & \le
        \left( \sum_{i=1}^N \abs{c_i} \right)
        \Norm{g - g_\eps}{L^\infty(\cX)}
        \le
        \SemiNormlr{f_0}{TV(\hDomx)} \Norm{g - g_\eps}{L^\infty(\cX)}.
      \end{aligned}
    \end{equation}
    Next, we have that
    \begin{equation}
      \begin{aligned}
          & \Norm{f_0 \circ^+ g_\eps - f_\eps \circ^+ g_\eps}{L^1(\Domx)}
        \\
          & =
        \int_{\Domx}
        \abs{\sum_{i=1}^N c_i \rho(x - g_\eps(x_i))
          - \sum_{i=1}^N c_i \rho_\eps(x - g_\eps(x_i))} \dx
        \\
          & \le \sum_{i=1}^N \abs{c_i}
        \int_{\Domx}
        \abs{\rho (x - g_\eps(x_i)) - \rho_\eps(x - g_\eps(x_i))} \dx
        \\
          & \le \half \left(\sum_{i=1}^N \abs{c_i} \right) \Norm{\rho - \rho_\eps}{L^1(\RR)}
        \le \SemiNormlr{f_0}{TV(\hDomx)} \Norm{\rho - \rho_\eps}{L^1(\RR)}.
      \end{aligned}
    \end{equation}
    Now $\SemiNormlr{f_0}{TV(\hDomx)} \le \SemiNormlr{f_\eps}{TV(\hDomx)} \le
      \SemiNormlr{f}{TV(\hDomx)}$ for $\eps$ sufficiently small, and further
    \begin{equation}
      \begin{aligned}
        \Norm{f - f_0}{L^1(\hDomx)}
          & \le
        \Norm{f - f_\eps}{L^1(\hDomx)}
        +
        \Norm{f_\eps - f_0}{L^1(\hDomx)}
        \\
          & \le
        \Norm{f - f_\eps}{L^1(\hDomx)}
        +
        \SemiNormlr{f}{TV(\hDomx)} \Norm{\rho - \rho_\eps}{L^1(\hDomx)}.
      \end{aligned}
    \end{equation}
    Putting together these inequalities, 
    \begin{equation}
      \begin{aligned}
          & \Norm{f \circ g^+ - f_\eps \circ^+ g_\eps}{L^1(\Domx)}
        \\
          & \le
        \SemiNormlr{f}{TV(\Domx)}
        \left(
        \Norm{g - g_\eps}{L^\infty(\cX)}
        +
        \Norm{\rho - \rho_\eps}{L^1(\RR)}
        \right)
        \\                    & \quad +
                              \Norm{g'}{L^\infty(\hDomx)}
        \left(
                              \Norm{f - f_\eps}{L^1(\hDomx)}
        +
                              \SemiNormlr{f}{TV(\hDomx)} \Norm{\rho - \rho_\eps}{L^1(\RR)}
        \right),
      \end{aligned}
    \end{equation}
    and the result follows upon rearranging the terms.

\subsection{Proof of Theorem~\ref{lem:shallow_approx}} \label{proof:lem:shallow_approx}

  We will first write each member $\bar{h} \in \bar{H}_{r-1}$ as a NN.
  Then we will write the collection of these NNs as a LRNR.
  Let $\bar{h} = f(\beta) \circ g(\alpha)^+$ and write $f(\beta)$ as
  \begin{equation}
    f(\beta) = \sum_{i} \beta_i \psi_i = \sum_{k=1}^K \bar{c}_k(\beta) \rho(\,\cdot - x_k),
  \end{equation}
  for some coefficients $\bar{c}_k(\beta)$ that depend linearly on $\beta$, and
  $\cX := \{x_k\}_{k=1}^K = \cup_i \{x_{ij}\}_{j=1}^{N_i}$. Recall that
  $g(\alpha) = \sum_{i=1}^r \alpha_i \phi_i.$
  Then, for corresponding continuous piecewise linear $f_\eps(\beta)$ and $g_\eps(\alpha)$, we
  have by Lem.~\ref{lem:compose_error},
  \begin{equation}
    \begin{aligned}
        & \Norm{f(\beta) \circ g(\alpha)^+ - f_\eps(\beta) \circ^+ g_\eps(\alpha) }{L^1(\Domx)}
      \\
        & \le 
      \SemiNormlr{f(\beta)}{TV(\hDomx)}
      \left(
      \Norm{\rho - \rho_\eps}{L^1(\RR)} + \Norm{g(\beta) - g_\eps(\beta)}{L^\infty(\cX)}
      \right).
    \end{aligned}
  \end{equation}
  By choosing $g_\eps$ to interpolate the point values $g_\eps(\beta)(x_k) =
    g(\beta)(x_k)$ for $x_k \in \cX$, the second term vanishes, yielding
  \begin{equation}\label{eq:fg_fege}
    \Norm{f(\beta) \circ g(\alpha)^+ - f_\eps(\beta) \circ^+ g_\eps(\alpha) }{L^1(\Domx)}
    \le 
    \SemiNormlr{f(\beta)}{TV(\hDomx)} \Norm{\rho - \rho_\eps}{L^1(\RR)}.
  \end{equation}
  Now, we rewrite $f_\eps(\beta) \circ^+ g_\eps(\alpha)$ as a NN,
  \begin{equation}
    \begin{aligned}
        & f_\eps(\beta) \circ ^+ g_\eps(\alpha)(x)
      \\
        & = \sum_{k=1}^K \bar{c}_k(\beta) \rho_\eps( x - g(\alpha)(x_k))
      \\
        & = \sum_{k=1}^K \bar{c}_k(\beta) \frac{1}{\eps}
      \left[
        \sigma\left(
        x + \frac{\eps}{2} - g(\alpha)(x_k)
        \right) 
        - \sigma\left(
        x - \frac{\eps}{2} - g(\alpha)(x_k)
        \right)
        \right],
    \end{aligned}
  \end{equation}
  so one can write
  \begin{equation}
    f_\eps(\beta) \circ ^+ g_\eps(\alpha)(x)
    =
    W^2 \sigma( W^1 x  + B^1) + B^2, \qquad x \in \Domx,
  \end{equation}
  with the weights and biases
  \begin{equation}
    \begin{aligned}
      W^1 & = 
      \left[ 1 , 1, \cdots, 1 \right]^T \in \RR^{2K \times 1},
      \\
      W^2 & = 
      \left[
        \bar{c}_1(\beta), \bar{c}_1(\beta), \bar{c}_2(\beta), \bar{c}_2 (\beta), \cdots
        , \bar{c}_K(\beta) , \bar{c}_K(\beta)
        \right] \in \RR^{1 \times 2K},
      \\
      B^1 & = 
      \left[
        \frac{\eps}{2} - g(\alpha)(x_1) , - \frac{\eps}{2} - g(\alpha)(x_1) ,
        \cdots,
        \frac{\eps}{2} - g(\alpha)(x_K) , - \frac{\eps}{2} - g(\alpha)(x_K)
        \right]^T
      \in \RR^{2K \times 1},
      \\
      B^2 & = 0 \in \RR^{1 \times 1}.
    \end{aligned}
  \end{equation}
  We reorganize the NNs as 2-layer LRNR $H_{r}$ in the form \eqref{eq:2layer_LRNR}
  in Exmpl.~\ref{expl:2layer_LRNR}. 
  Let $\{e_i\}_{i=1}^r$ denote the standard basis of $\RR^{r}$.
  We write the weights $W^1, W^2$ as
  \begin{equation} \label{eq:W1W2}
    W^1 = e_1 \cdot \UU_r^1,
    \quad
    W^2
    =
    [\beta_1, \cdots , \beta_{r-1}, 0]\cdot \UU_r^2,
  \end{equation}
  where $\UU_r^1 \subset \RR^{2K \times 1}$ and $\UU_r^2 \subset \RR^{1 \times 2K}$
  are given by 
  \begin{equation}
    \begin{aligned}
      \UU_r^1
        & =
      ( U_i^1)_{i=1}^r,
        & 
        & 
      \left[U_i^1\right]_j := 1,
      \text{ for all }i, j,
      \\
      \UU_r^2
        & =
      ( U_i^2)_{i=1}^r,
        & 
        & 
      \begin{cases}
        \left[U_i^2\right]_j
        :=
        \bar{c}_{\lfloor j/2 \rfloor} (e_i)
          & \text{ for } i = 1, ...\,, r-1
        \\
        \left[U_i^2\right]_j
        :=
        0
          & \text{ for } i = r.
      \end{cases}
    \end{aligned}
  \end{equation}
  Write the biases $B^1, B^2$ as
  \begin{equation} \label{eq:B1B2}
    B^1
    =
    [\alpha_1, \alpha_2, \cdots, \alpha_{r-1}, 1] \cdot \VV^1,
    \quad
    B^2
    =
    0 \cdot \VV^2,
  \end{equation}
  where $\VV_r^1 \subset \RR^{2K \times 1}$ and $\VV_r^2 \subset \RR^{1 \times
      1}$ are defined by 
  \begin{equation}
    \begin{aligned}
      \VV_r^1 & = (V_i^1)_{i=1}^r,
              & 
              & 
      \begin{cases}
        \left[ V_i^1 \right]_j:= \psi_i (x_{\lfloor j /2 \rfloor})
          & \text{ for } i = 1, ...\,, r-1,
        \\
        \left[ V_i^1 \right]_j:= 0
          & \text{ for } i = r,
      \end{cases}
      \\
      \VV_r^2 & = (V_i^2)_{i=1}^r,
              & 
              & 
      V_i^1 := 0,
      \quad
      \text{ for all }i.
    \end{aligned}
  \end{equation}
  Defining the map $\mu$ over $A \times B$, whose mapped values $\mu(\alpha,
    \beta) = (\gamma, \theta)$ have individual entries $\gamma =
    (\gamma^1, \gamma^2)$ and $\theta = (\theta^1, \theta^2)$ that are
  given by
  \begin{equation} \label{eq:gamma_alpha}
    \begin{aligned}
      \gamma^1 & = e_1,
                & 
      \gamma^2 & = [\beta_1, \cdots , \beta_{r-1}, 0]^T,
      \\
      \theta^1 & = [\alpha_1 , \cdots, \alpha_{r-1}, 1]^T,
                & 
      \theta^2 & = 0,
    \end{aligned}
  \end{equation}
  the
  LRNR $h(\gamma, \theta)$ is defined as in Exmpl.~\ref{expl:2layer_LRNR} with the
  coefficients \eqref{eq:gamma_alpha} and weights and biases
  (\ref{eq:W1W2}-\ref{eq:B1B2}). By construction, it satisfies
  \begin{equation}\label{eq:fgh}
    h(\gamma, \theta)
    =
    f_\eps(\beta) \circ^+ g_\eps(\alpha).
  \end{equation}
  Plugging \eqref{eq:fgh} into \eqref{eq:fg_fege},
  \begin{equation}
    \begin{aligned}
      \Norm{f(\beta) \circ g(\alpha)^+ - h(\gamma, \theta) }{L^1(\Domx)}
      \le 
      \SemiNormlr{f(\beta)}{TV(\hDomx)} \Norm{\rho - \rho_\eps}{L^1(\RR)}.
    \end{aligned}
  \end{equation}

  The desired inequality follows from
  \begin{equation}
    \SemiNormlr{f(\beta)}{TV(\hDomx)}
    \le
    \SemiNormlr{\sum_i \beta_i \psi_i}{TV(\hDomx)}
    \le
    \max_i \abs{\beta_i} \sum_i \SemiNormlr{\psi_i}{TV(\hDomx)}.
  \end{equation}

\subsection{Proof of Lemma~\ref{lem:Zk}} \label{proof:lem:Zk}
  By construction we have for any $t_k \in \Domt$
  \begin{equation}
    \hcharX (\cdot, t_k) = \Xi_k(\cdot, t_k),
  \end{equation}
  and for $t \in [t_k, t_{k+1}]$
  \begin{equation}
    \hcharX (z, t) = \Xi_k(z, t)
    \quad
    \text{ for all } z \in \hDomx \setminus \Upsilon_t.
  \end{equation}
  Let us fix $x_0 \in \Upsilon_{t_k}$
  and recall $\hcharX_0$ is the rarefied characteristics with multivalued 
  inverse \eqref{eq:hcharX0}. Then we have 
  \begin{equation} \label{eq:hXZk_error_pre}
    \begin{aligned}
      E_k (t; x_0)
        & :=
      \sup_{x \in [\skleft(x_0,t), \skright(x_0,t)]}
      \Norm{\hcharX (x, t) - \Xi_k(x, t)}{}
      \\
        & \le
      \int_{t_k}^t  
      \sup_{\substack{
          z \in [\skleft(x_0, t), \skleft(x_0, t_k) ]
      \\
          w \in [\skright(x_0, t_k), \skright(x_0, t) ]
        }}
      \abs{\ddf{\hcharXc}{t}(z, t)
        - \ddf{\hcharXc}{t}(w, t)} \dtau
      \\                   & \le
                          \left(
                          \sup
                          _{
                            \substack{
                              z \in [\skleft(x_0, t), \skleft(x_0, t_k) ]
      \\
          w \in [\skright(x_0, t_k), \skright(x_0, t) ]
        }
      }
      \abs{
        (F' \circ \hat{u}_0) (z)
        -
        (F' \circ \hat{u}_0) (w)
      }
      \right)
      \cdot
      \abs{t - t_k}
      \\                    & \le
                            \Normlr{F'' }{L^\infty ([\underline{u}_{\text{loc}},
                                \overline{u}_{\text{loc}}])}
                            (\overline{u}_{\text{loc}} - \underline{u}_{\text{loc}})
      \cdot
                            \abs{t - t_k},
    \end{aligned}
  \end{equation}
  where we let
  \begin{equation}\label{eq:uloc_minmax}
    \begin{aligned}
      \overline{u}_{\text{loc}}
        & :=
      \sup_{\substack{z \in [\skleft(x_0, t), \skleft(x_0, t_k) ] \\
          \qquad \cup [\skright(x_0, t_k), \skright(x_0, t) ]}} u_0(z),
      \quad
      \\
      \underline{u}_{\text{loc}}
        & :=
      \inf_{\substack{z \in [\skleft(x_0, t), \skleft(x_0, t_k) ] \\
          \qquad \cup [\skright(x_0, t_k), \skright(x_0, t) ]}} u_0(z).
    \end{aligned}
  \end{equation}
  Since $[ \overline{u}_{\text{loc}},  \underline{u}_{\text{loc}}] \subset
    u_0(\Domx)$, we obtain the first estimate \eqref{eq:hXZk_error}.

  Next, we prove the second upper bound. By construction $\Xi_k(\cdot,
    t)$ equals $\hcharX_0(\cdot, t)$ within $\shocksett{t_k}$ so we have
  \begin{equation} \label{eq:ZkTV_split}
    \begin{aligned}
      \abs{\Xi_k (\cdot, t)}_{TV(\hDomx)}
        & \le
      \abs{\Xi_k (\cdot, t)}_{TV(\shocksett{t_k})}
      +
      \abs{\Xi_k (\cdot, t)}_{TV(\hDomx \setminus \shocksett{t_k})}
      \\                   & \le
                          \abs{\Xi_k (\cdot, t)}_{TV(\shocksett{t_k})}
                          +
                          \SemiNorm{\hcharX_0 (\cdot, t)}{TV(\hDomx \setminus \shocksett{t_k})}.
    \end{aligned}
  \end{equation}
  To estimate the first term on the RHS, note that 
  $\Xi_k(\cdot, t)$ restricted
  to $\shocksett{t_k}$ is a linear interpolant of $\hcharX_0(\cdot, t)$. More
  specifically, for any $x_0 \in \shocksett{t_k}$,
  \begin{equation}
    \begin{aligned}
      \abs{\Xi_k(\cdot, t) }_{TV([\skleft(x_0, t_k), \skright(x_0, t_k)])}
        & =
      \abs{
        \hcharX_0(\skleft(x_0, t_k), t)
        -
        \hcharX_0(\skright(x_0, t_k), t)
      }
      \\
        & \le 
      \SemiNorm{\hcharX_0(\cdot, t) }
      {TV([\skleft(x_0, t_k), \skright(x_0, t_k)])},
    \end{aligned}
  \end{equation}
  and the local total variation bound is then naturally extended to $\shocksett{t_k}$,
  \begin{equation}
    \SemiNorm{\Xi_k(\cdot, t) }{TV(\shocksett{t_k})}
    \le
    \SemiNorm{\hcharX_0(\cdot, t) }{TV(\shocksett{t_k})}.
  \end{equation}
  Using this in \eqref{eq:ZkTV_split}, we have that
  \begin{equation}
    \begin{aligned}
      \abs{\Xi_k(\cdot, t) }_{TV(\hDomx)}
        & \le
      \SemiNorm{\hcharX_0(\cdot, t) }{TV(\hDomx)}
      =
      \SemiNorm{\hId + t F' \circ \hat{u}_0 }{TV(\hDomx)}
      \\
        & \le
      \abs{\Domx}
      +
      T
      \SemiNorm{u_0}{TV(\Domx)} \Norm{F''}{L^\infty(u_0(\Domx))},
    \end{aligned}
  \end{equation}
  which is the desired inequality upon inserting $\abs{\Domx}=1$.

    \subsubsection{Remark}
    
  Note that the upper bound \eqref{eq:hXZk_error} with the TV norm
  $\SemiNormlr{u_0}{TV(\Domx)}$ obtained from the last term in
  \eqref{eq:hXZk_error_pre} is not sharp, and can be replaced by $2
    \Norm{u_0}{L^\infty(\Domx)}$ because
    $\abs{ \overline{u}_{\text{loc}} - \underline{u}_{\text{loc}} }
    \le
    2\Norm{u_0}{L^\infty( [\skleft(x_0, t), \skright(x_0, t) ])},$
  but we will keep the TV norm here, as doing so this will not affect the final
  results of this work.

\subsection{Proof of Theorem~\ref{thm:lrnr_classical}} \label{proof:thm:lrnr_classical}
  We write the classical solution $u(t) = u(\cdot, t)$ as
  \begin{equation}
    \begin{aligned}
      u(\cdot, t) & = u_0 \circ X^{-1}(\cdot,t) 
      \\                    & =
                            u_0 \circ (\Id(\cdot) + t (F' \circ u_0) (\cdot))^{-1},
    \end{aligned}
  \end{equation}
  then it is clear that the set of solutions forms a transported subspace,
  so Thm.~\ref{lem:shallow_approx} applies. Recall that the approximation proceeds
  below:
  Letting $\hDomx = \Domx$, define $u_{0\eps}, X_\eps \in C(\Domx) \cap
    P_1(\Domx)$ for which Lem.~\ref{lem:compose_error} implies
  \begin{equation}
    \begin{aligned}
        & \Norm{u_0 \circ X^+ - u_{0\eps} \circ^+ X_\eps}{L^1(\Domx)}
      \\
        & \le
      \Norm{X'}{L^\infty (\Domx)} \Norm{u_0 - u_{0\eps}}{L^1(\Domx)}
      \\
        & +
      \SemiNormlr{u_0}{TV(\Domx)} \left(
      (1 + \Norm{X'}{L^\infty(\Domx)}) \Norm{\rho - \rho_\eps}{L^1(\RR)}
      + \Norm{X - X_\eps}{L^\infty(\cX)}
      \right),
    \end{aligned}
  \end{equation}
  then picking $X_\eps$ so that it interpolates $X$
  at the points in $\cX
    \subset \Domx$ as set in Lem.~\ref{lem:compose_error}, the norm $\Norm{X -
      X_\eps}{L^\infty(\cX)} = 0$, so we have
  \begin{equation}\label{eq:classic_subspace_error}
    \begin{aligned}
        & \Norm{u_0 \circ X^+ - u_{0\eps} \circ^+ X_\eps}{L^1(\Domx)}
      \\
        & \le
      \Norm{X'}{L^\infty (\Domx)} \Norm{u_0 - u_{0\eps}}{L^1(\Domx)}
      +
      \SemiNormlr{u_0}{TV(\Domx)}
      (1 + \Norm{X'}{L^\infty(\Domx)}) \Norm{\rho - \rho_\eps}{L^1(\RR)}.
    \end{aligned}
  \end{equation}
  As shown in \eqref{eq:fgh} in the proof of Thm.~\ref{lem:shallow_approx},
  there is a LRNR $h$ satisfying 
  \begin{equation}\label{eq:classicsubspace_LRNR_error}
    \begin{aligned}
        & \Norm{u_{0\eps} \circ^+ X_\eps(t) - h (t)}{L^1(\Domx)}
      \\
        & \le
      T  \abs{u_0}_{TV(\Domx)} \Norm{\rho - \rho_\eps}{L^1(\RR)}
      +
      \frac{1}{K} \Norm{X'}{L^\infty (\Domx)} \abs{u_0}_{TV(\Domx)},
    \end{aligned}
  \end{equation}
  due to the estimate 
  \begin{equation}\label{eq:hcharXprime_classical}
    \Norm{X'}{L^\infty(\Domx)}
    \le
    1 + T\Norm{F''}{L^\infty(u_0(\Domx))} \Norm{u'}{L^\infty(\Domx)}.
  \end{equation}
  Setting $\eps$ to be sufficiently small, the result follows by \eqref{eq:classic_subspace_error} and \eqref{eq:classicsubspace_LRNR_error}.

  The coefficients $\gamma, \theta$ are induced by the mapping $\mu$
  given by Thm.~\ref{lem:shallow_approx}, see \eqref{eq:gamma_alpha}. The
  transported subspace coefficients depend linearly on $t$ or are
    constant respect to it, thus $\gamma, \theta$ are linear or
    constant functions of $t$.

\subsection{Proof of Theorem~\ref{lem:chieut}} \label{proof:lem:chieut}
  By construction, $\doublebarwedge(\cdot, t)$ is based on $\eta_k
    \in C(\hDomx) \cap P_1(\hDomx)$ given in \eqref{eq:etak} for index $k = 1,
    ...\,,K-1$ in \eqref{eq:chieut_approx} so we have for $t \in [t_k, t_{k+1}]$
  \begin{equation}
    \begin{aligned}
        & \Normlr{\hcharX(\cdot, t) - \doublebarwedge(\cdot, t)}{L^\infty(\hDomx)}
      \\
        & \le
      \Normlr{\barwedge(\cdot, t) - \doublebarwedge(\cdot, t)}{L^\infty(\hDomx)}
      +
      \Normlr{\hcharX(\cdot, t) - \barwedge(\cdot, t)}{L^\infty(\hDomx)}
      \\
        & \le
      \Normlr{t \eta_k}{L^\infty(\hDomx)}
      +
      \Normlr{\hcharX(\cdot, t) - \barwedge(\cdot, t)}{L^\infty(\hDomx)}
      \\
        & \le
      \frac{4 \Const}{K}
      +
      \frac{1 + 6\Const}{K}
      \le
      \frac{1 + 10 \Const}{K},
    \end{aligned}
  \end{equation}
  upon applying \eqref{eq:etakinfnorm} and \eqref{eq:barwedge_error}.
  The RHS does not depend on the interval $[t_k, t_{k+1}]$ that $t$ belongs to,
  so the result follows.

\subsection{Proof of Theorem~\ref{thm:pwlin_entropy}} \label{proof:thm:pwlin_entropy}
  We initially suppose $u_0 \in \overline{\cU}$ so that $u_0 = \bar{u}_0$.
  We begin by providing an upper bound on the derivative of
  $\hcharX$.  Recall that $\hcharX$ is continuous and piecewise smooth, therefore its
  weak derivative $\hcharX'$ is in $L^\infty(\hDomx)$. Decomposing the
  extended domain into the shock set $\shockset$ and its complement 
  \begin{equation}
    \hDomx = (\hDomx \setminus \shockset) \cup \shockset,
  \end{equation}
  we have that $\hcharX$ is constant in $\shockset$ so that 
  $\Norm{\hcharX'}{L^\infty(\shockset)} = 0$. 
  Now recall the intervals inserted into in the extended domain 
  $\Gamma_x = \bigcup_{k=1}^{n_\cR}\Gamma_k$ defined in \eqref{eq:Gammax}.
  Writing 
  the shock-free part of the extended domain $\hDomx \setminus \shockset$
  into $\Gamma_x$ and its complement
  \begin{equation}
    \hDomx \setminus \shockset
    =
    \left((\hDomx \setminus \shockset) \, \setminus \Gamma_x\right) \cup \Gamma_x,
  \end{equation}
  we straightforwardly bound,
  \begin{equation}
    \begin{aligned}
      \Norm{\hcharX'}{L^\infty(\hDomx)}
        & \le
      \Norm{\hcharX'}{L^\infty(\hDomx \setminus \shockset)}
      +
      \Norm{\hcharX'}{L^\infty(\shockset)}
      \\                    & \le
                            \Norm{\hcharX'}{L^\infty(((\hDomx \setminus \shockset) \setminus \Gamma_x)
                              \cup (\Gamma_x \setminus \shockset))}
      \\                    & \le
                            1 + T \max\left\{
                            \Norm{(F' \circ \hat{u}_0)'}
                            {L^\infty((\hDomx \setminus \shockset) \setminus \Gamma_x)}
      ,\,
                            \Norm{(F' \circ \hat{u}_0)'}{L^\infty(\Gamma_x)}
      \right\}.
    \end{aligned}
  \end{equation}
  Due to
  the way of $\hat{u}_0$ was defined in \eqref{eq:uhat},
  for $k = 1, ...\,,n_\cR$,
  \begin{equation}
    \begin{aligned}
      \Norm{(F' \circ \hat{u}_0)'}{L^\infty(\Gamma_k)}
        & =
      \Normlr{\nu_k^- + \frac{\nu_k^+ - \nu_k^-}{\eps_k} (\cdot - \iota(\xi_k))}
      {L^\infty(\iota(\xi_k) + [0, \eps_k))}
      \\                    & = \max_{\pm}\abs{\nu_k^\pm} \le \Norm{(F' \circ u_0)'}{L^\infty(\Domx)},
    \end{aligned}
  \end{equation}
  since the $L^\infty$-norm is taken over a linear function
  connecting $\nu_k^-$ and $\nu_k^+$.
  So we obtain the $L^\infty$ estimate
  \begin{equation} \label{eq:hcharXprime}
    \Norm{\hcharX'}{L^\infty(\hDomx)}
    \le
    1 + T \Norm{(F' \circ u_0)'} {L^\infty(\Domx)}
    \le
    1 + T \Norm{F''}{L^\infty(u_0(\Domx))}
    \SemiNormlr{u_0}{TV(\Domx)},
  \end{equation}
  which is a generalization of the crude bound \eqref{eq:hcharXprime_classical}
  for classical characteristics.

  Due to Lem.~\ref{lem:compose_error}, and choosing $\epsilon = 1/K$ so that
  $\Norm{\rho - \rho_\eps}{L^1(\RR)} \lesssim 1/K$ we have
  \begin{equation}
    \begin{aligned}
        & \Norm{u(\cdot, t) - \bar{h} (\cdot, t)}{L^1(\Domx)}
      \\
        & \quad \le
      \Norm{\hcharX'}{L^\infty(\hDomx)} \Norm{\hat{u}_0 - \hat{u}_{0\eps}}{L^1(\hDomx)}
      \\
        & \qquad+
      \SemiNormlr{u_0}{TV(\Domx)}
      \left(
      \left( 1 + \Norm{\hcharX'}{L^\infty(\hDomx)} \right)
      \Norm{\rho - \rho_\eps}{L^1(\RR)}
      +
      \Norm{\hcharX - \doublebarwedge}{L^1(\RR)}
      \right).
    \end{aligned}
  \end{equation}
  The result follows from the estimates \eqref{eq:hcharX_error} and
  \eqref{eq:hcharXprime}, and further,
  \begin{equation}
      \Norm{u(\cdot, t) - \bar{h} (\cdot,t))}{L^1(\Dom_x)}
      \lesssim
      \frac{1}{K} \SemiNormlr{u_0}{TV}
      +
      \frac{1}{K} \SemiNormlr{u_0}{TV}^2 T \Norm{F''}{L^\infty(u_0(\Dom_x))}.
  \end{equation}
  Gathering the terms,
  \begin{equation}
    \begin{aligned}
        & \Norm{u(\cdot, t) - \bar{h} (\cdot,t))}{L^1(\Dom_x)}
      \\                   & \lesssim \frac{1}{K} \SemiNormlr{u_0}{TV}
      \left(1 + \SemiNormlr{u_0}{TV} \right)
      \left(1 + T \Norm{F''}{L^\infty(u_0(\Domx))} \right).
    \end{aligned}
  \end{equation}
  Finally, lifting the restriction that $u_0 \in \overline{\cU}$,
  then employing a density argument followed by an application of the triangle
  inequality, the desired estimate is obtained.

\subsection{Proof of Theorem~\ref{thm:lrnr_entropy}} \label{proof:thm:lrnr_entropy}
    Each solution to the IVP with $u_0 \in \cU$ can be 
    uniformly approximated by
    one with $\bar{u}_0 \in  \overline{\cU}$, so it suffices to consider 
    that case. Recall the approximation of the initial 
    data of the form $\hat{u}_{0\eps}$ \eqref{eq:hatu0eps}
    with its grid $(x_i)_{i=1}^K$ and coefficients $(\hat{u}_{0,
        i})_{i=1}^K$, along with the approximation $\doublebarwedge$
    \eqref{eq:chieut_approx} of the rarefied characteristics.

    We will show that the piecewise approximation $h(x,t)$
    \eqref{eq:pwlin_entropy} can be represented by a LRNR. Denote the
    hidden variables by
    \begin{equation}
      Z^{\ell+1} = W^{\ell} \sigma(Z^{\ell}) + B^{\ell},
      \quad
      \ell = 0, ... , L-1,
    \end{equation}
    and the initial input is $Z^{0} := x$. We remind the reader that in our
    setting, the spatial variable $x$ is the sole input to the NN, and the time variable $t$ is an external parameter. We use the notation $[N] := \{1, 2, ...\,, N\}$ for $N \in
      \NN$ and the Einstein summation convention for the subscripts. 
    We will use higher-dimensional tensors to specify the hidden variables
    and the affine mappings instead of using vectors and matrices, in order to
    simplify the index notation.

    We construct the first layer $(\ell = 1)$ so that we have
    \begin{equation}
      \RR^{K \times 3 \times K}
      \ni
      Z^{1}_{ijk}
      =
      \minialign{
        x,        && & i \in [K], j = k = 1,    \\
        x_i,      && & i \in [K], j = 2, k = 1, \\
        t - t_k,  && & i = 1, j = 3, k \in [K], \\
        0,         && & \text{otherwise.}
      }
    \end{equation}
    Note also that only $\sim K$ entries are non-zero. 
    We obtain this by 
    \begin{equation}
      W_{ijk}^{1}
      =
      \minialign{
        1, && &j, k = 1, i \in [K], \\
        0,         && & \text{otherwise,}
      }
    \end{equation}
    and
    \begin{equation}
      B_{ijk}^{1}
      =
      \minialign{
        x_i,      && &i \in [K], j = 2, k = 1,  \\
        t - t_k,  && &i = 1, j = 3, k \in [K], \\
        0,         && & \text{otherwise,}
      }
    \end{equation}
    $B$ here has linear or constant dependence on $t$ of rank two.

    We construct the second layer $(\ell = 2)$ so that we have
    \begin{equation}
      \RR^{K \times K \times 2}
      \ni
      Z^{2}_{pqn}
      =
      \minialign{
        x,   && & n = q = 1, p \in [K], \\
        x_p - z_q - \sigma \left(\frac{1}{\eps_0} (t - t_q) \right),
        && & n = 2, p, q \in [K],  \\
        0,         && & \text{otherwise,}
      }
    \end{equation}
    with $\eps_0 := T/K$, and recall that $(z_k)_{k=1}^K$ is the adaptive grid used for constructing
    $\Xi_k$ \eqref{eq:Xik}.
    We obtain this by assigning the weights 
    \begin{equation}
      W_{ijk,pqn}^{2}
      =
      \minialign{
        \delta_{ip},              && & s = 1, j= 1,\, i, p \in [K], n = 1,\\
        \delta_{ip},          && & s = 2, j = 2, \, i \in [K], n \in [K],\\
        -\frac{\delta_{kq}}{\eps_0},&& & s = 2, i = 1, j = 3,\, k,q \in [K],\\
        0,         && & \text{otherwise,}
      }
    \end{equation}
    and the bias 
    \begin{equation}
      B_{pqn}^{2}
      =
      \minialign{
        - z_q, && & n = 2, \, p \in [K],\, q \in [K], \\
        0,         && & \text{otherwise,}
      }
    \end{equation}
    and recalling that $\sigma(z) / \eps_0 = \sigma(z/ \eps_0)$. This layer
    has rank one, and the operations are identical for each $p$ in this and the
    subsequent layer, so it is parallelizeable.

    We construct the third layer $(\ell = 3)$ so that we have
    \begin{equation}
      \RR^{K \times 2}
      \ni
      Z^{3}_{ij}
      =
      \minialign{
        x, && & i \in [K], j = 1, \\
        x_i + \doublebarwedge(x_i, t), && & i \in [K], j = 2, \\
        0,         && & \text{otherwise,}
      }
    \end{equation}
    We obtain this by setting
    \begin{equation}
      W_{ij,pqn}^{3}
      =
      \minialign{
        \delta_{ip}, && & n = q = j = 1, i, p \in [K], \\
        a_{q}(t) \delta_{ip},
        && & j = n = 2,\, i,p \in [K], \\
        0,         && & \text{otherwise,}
      }
    \end{equation}
    and
    \begin{equation}
      B_{ij}^{3}
      =
      \minialign{
        0, && & j = 1,\\
        \barwedge_K(x_i, t), && & i \in [K], j = 2, \\
        0,         && & \text{otherwise,}
      }
    \end{equation}
    in which we set the coefficients $a_{q}(t)$, an linear or constant function of $t$,
    to obtain the the function \eqref{eq:chieut_approx} of above.
    This layer has rank two.

    We construct the fourth layer $(\ell = 4)$ so that we have 
    \begin{equation} \label{eq:eps1wgts}
      \RR^{K \times 2}
      \ni
      Z^{4}_{ps}
      =
      \minialign{
        \sigma\left( 
        -\frac{1}{\eps_1} (x - x_p - \doublebarwedge(x_p, t)) - \half
        \right),
        && & p \in [K], s = 1,
        \\
        \sigma \left( 
        \frac{1}{\eps_1} (x - x_p - \doublebarwedge(x_p, t)) + \half
        \right),
        && & p \in [K], s = 2, \\
        0,         && & \text{otherwise,}
      }
    \end{equation}
    where we set $\eps_1 < \min_{p, q \in [K], p \ne q}  |x_p - x_q|$. We
    obtain this by setting
    \begin{equation}
      W_{ps,ij}^{4}
      =
      \minialign{
        \frac{(-1)^{(j+s+1)}}{\eps_1} \delta_{pi} ,
        && &s = 1,2, \, i, p \in [K], \, j=1,2, \\
        0,         && & \text{otherwise,}
      }
    \end{equation}
    and
    \begin{equation}
      B_{ps}^{4}
      =
      (-1)^{(s+1)}/2,
      \quad s = 1,2,  p \in [K].
    \end{equation}
    The layer has rank one.

    We construct the fifth layer $(\ell = 5)$ so that we have  
    \begin{equation}
      h (x, t)
      =  \sum_{j = 1}^K \hat{u}_{0, j}
      \rho_\eps (x - \doublebarwedge(x_j, t)).
    \end{equation}
    We assign the weights that approximate
    the extended initial condition $\hat{u}_0$,
    \begin{equation}
      W_{ps}^{5}
      =
      (-1)^{s+1} \hat{u}_{0, p},
      \qquad
      s = 1,2, 
      \quad
      p \in [K],
    \end{equation}
    and then let $B^{5}$ be zero. These weights take linear combinations
    of the inputs from the previous layer $Z^4$ \eqref{eq:eps1wgts} so that 
    $\rho_\eps(x - \doublebarwedge(x_j, t))$ is computed; note how $\rho_\eps$
    was written as a sum of two ReLUs \eqref{eq:rho-eps}. The layer has rank one.

    Then we have that $h(x,t)$ is equal to $\bar{h}(x,t)$, so the error
    estimate follows from that for $\bar{h}$ (Thm.~\ref{thm:pwlin_entropy}).
    All the weights and biases can be written as a linear combination of
    at most two basis elements, and the coefficients are either constant
    or linear with respect to $t$.

\bibliographystyle{siamplain}

\end{document}